%% file: main_lccvstb.tex
\documentclass[11pt,a4paper,reqno]{amsart}
\usepackage{amsfonts}
\usepackage{amsmath}
\usepackage{amssymb}
\usepackage{graphicx}

\usepackage{tikz}

\usepackage[margin=20mm]{geometry}

\usepackage{multirow}
\usepackage{array}

\usepackage[colorlinks, linkcolor=blue,  citecolor=blue, urlcolor=blue, bookmarks=false]{hyperref}%
\hypersetup{pdfstartview=FitH}
\usepackage{url}
\usepackage{xspace}

\usepackage{dsfont}

\usepackage{verbatim}
\usepackage{alltt}
\usepackage{graphicx}
\graphicspath{{../figures/}{../figuresOld/}}

\newtheorem{theorem}{Theorem}[section]

\newtheorem{lemma}[theorem]{Lemma}

\newtheorem{proposition}[theorem]{Proposition}
\newtheorem{remark}[theorem]{Remark}

\numberwithin{equation}{section}

\def\XXint#1#2#3{{\setbox0=\hbox{$#1{#2#3}{\int}$}
     \vcenter{\hbox{$#2#3$}}\kern-.5\wd0}}

\let\originalleft\left
\let\originalright\right
\renewcommand{\left}{\mathopen{}\mathclose\bgroup\originalleft}
\renewcommand{\right}{\aftergroup\egroup\originalright}

\renewcommand{\ref}[1]{\autoref{#1}}

\AtBeginDocument{
}

\usepackage{enumitem}
\setlist[description]{align=right, labelwidth=2cm, leftmargin=!, font=\normalfont\bfseries}

\begin{document}
\title[Sharp stability for log-concave sources]{Sharp potential and Brenier map $W_2$-stability for log-concave sources}
\author{Yuanlong Ruan}
\thanks{Copyright \copyright~2026 The Author. All rights reserved.}
\address{Beihang University, Beijing, China}

\subjclass[2020]{49Q22, 49J52, 52A41}
\keywords{optimal transport, quantitative stability, log-concave, sharp exponent}

\date{\today}

\begin{abstract}
We study stability of Brenier potentials and maps under changes of the
target measure. Let $\kappa>0$, for a full support $\kappa$-strongly log-concave source $\rho$,
possibly with a non-smooth potential, the normalized transport potentials satisfy
\[
\|u_\mu-u_\nu\|_{L^2(\rho)}\leqslant\sqrt{2/\kappa}\,W_2(\mu,\nu)
\]
for arbitrary targets with finite second moments. The proof controls
normalized Laguerre cell barycentres by their mass derivatives and
centers the potential velocity to remove unbounded source cell locations. Bounded variation interpolation then yields the sharp map exponent $1/3$ for targets in a common compact set,
\[
\left\Vert T_{\mu}-T_{\nu}\right\Vert _{L^{2}(\rho)}\leqslant \operatorname{Const} \cdot W_{2}\left(  \mu,\nu\right)  ^{1/3}.
\]
Sharpness of the exponents for both the potential and Brenier map stability are established.

\end{abstract}

\maketitle

\input{lccvmain}

\bibliographystyle{plain}
\bibliography{main}
\end{document}

%% file: lccvmain.tex
\section{Introduction and main results}

\label{sec:introduction}

Quantitative $W_{1}$-stability of Brenier maps has been studied in a series of
works \cite{berman2021convergence}\cite{delalande2023quantitative}\cite{merigot2026sharp}\cite{letrouit2026gluing}. Recent improvements are
obtained in \cite{ruan2026sharp} under relaxed source and target settings. The
compact source target case is well-studied and the $1/4$-exponent is shown to
be sharp for the $W_{1}$-stability. This is independently obtained by
\cite{merigot2026sharp}\cite{ford2026quantitative}\cite{ruan2026sharp}\cite{wang2026quantitative} with sharpness verified by the extreme case
construction \cite{letrouit2026unstable}. Underlying these works are the
geometry of convex duality \cite{carlier2023fenchel}\cite{carlier2025quantitative}.

When the source is uniform in a compact convex set and target in a compact
set, the $W_{2}$-stability of Brenier maps is proved by
\cite{merigot2020quantitative} with exponent $2/15.$ One recent study
\cite{wang2026quantitative} also studies $W_{2}$-stability under more general
source target assumptions and $1/4$-exponent is shown to be sharp for the
class of bounded variation sources with finite second moment and bounded targets.

In a recent preprint, \cite[Proposition 5.1]{openai2026sharp} proved the
linear potential $W_{2}$-stability under the same setting as
\cite{merigot2020quantitative}. The sharp $1/3$-exponent of Brenier map
$W_{2}$-stability was obtained by a sharp interpolation from potential
stability. The main tool used there is the finite cell calculation which
bounds the potential perturbation by differentiating prescribed mass cells and
estimating their moments. The result there also alleviated concern that
potential to map conversion might cause loss of information.

The current article intends to establish the same sharp $1/3$-exponent for
Brenier map $W_{2}$-stability, but with log-concave sources. The formal cell
identities have the same structure as in \cite{openai2026sharp}, but obstacles
exist beyond a uniform compact source and the estimates needed for the
log-concave sources are different in several crucial steps. For a compact
source, all cell locations are uniformly bounded. In our case, the cells and
their faces are unbounded, and the integrands contain the density $f$, $xf$,
and quadratic coordinate factors. Unlike the uniform source, the new estimate
used here is for the \textit{normalized cell barycenters } under $\left(
\ref{eq:strong-source}\right)  $ $b_{i}=B_{i}/M_{i}$ which can be arbitrarily
far from the area where large source probability concentrates. A bound on raw
moments alone is not enough. To overcome the difficulty, a weighted midpoint
coupling (Lemma \ref{lem:weighted-midpoint}) is introduced, which allows us to
bound barycenter variations by the variation of cell masses, with constant
$1/\kappa$, independent of the source radius (Theorem \ref{thm:barycenter}).

For potential stability, a centering strategy is used to resolve several key
obstacles related to unboundedness of cells (Proposition
\ref{prop:finite-strong}). Importantly, each cell velocity, i.e. $x\cdot
v_{i}+r_{i}$ in the proof there, must be centered, this is done through the
centering the potentials. The important concequence of the centering is that
it allows us to solve the test direction $q$ from $DM_{i}\left[  q,0\right]
=m_{i}k_{i}$ $\left(  \ref{eq:solve-test-direciton}\right)  $ without
estimating $q$ or an inverse spectral gap, via the graph range (Lemma
\ref{lem:weighted-calculus}). Thus neither remote cells, small cell masses,
nor nearly colliding sites enter the final constant. Moreover, a diameter
bound for fluctuations inside a cell is unavailable. The conditioned
Brascamp-Lieb estimate $\left(  \ref{eq:conditional-covariance}\right)  $
gives a uniform replacement, including for unbounded cells. Centering the
potential velocity on each cell also cancels the unbounded cell locations.

Finally, the Brenier map stability is established through a sharp potential to
gradient interpolation given in Section \ref{sec:BV}. For the compact case,
the conversion is obtained in \cite[Proposition 4.1]
{delalande2023quantitative}.

\subsection{Notations}

$B\left(  0,R\right)  $ denotes an open ball with radius $R.$ $BV\left(
\mathbb{R}^{d}\right)  $ is the space of functions of bounded variations
\cite[Chapter 3]{ambrosio2000functions}. Let $\epsilon>0,$ a mollifier
$\zeta_{\epsilon}\in C_{c}^{\infty}(\mathbb{R}^{d})$ is a nonnegative smooth
function supported in $\bar{B}\left(  0,\epsilon\right)  $ with integration
equal to $1.$ And we always use symmetric mollifier. For a probability $\rho$
and $g\in L^{2}(\rho)$, $\operatorname{Var}_{\rho}\left(  g\right)  =\int
g^{2}d\rho-\left(  \int gd\rho\right)  ^{2}$. The srandard Gaussian
distribution in $\mathbb{R}^{d}$ is denoted by $\gamma_{d}.$

\subsection{Main results}

Let $V:\mathbb{R}^{d}\longmapsto\mathbb{R}$ be finite convex and
\begin{equation}
d\rho=fdx=\mathcal{Z}^{-1}e^{-V\left(  x\right)  }dx\text{ and }
0<\mathcal{Z=}\int_{\mathbb{R}^{d}}e^{-V\left(  x\right)  }dx<\infty,
\label{eq:logconcave-source}
\end{equation}
Since $V$ is finite, $\mathcal{Z}>0$ is automatically valid and $\rho$ has
full support. We assume that $V$ is $\kappa$-strong convex
\begin{equation}
V\left(  x\right)  -\tfrac{\kappa}{2}\left\vert x\right\vert ^{2}\text{ is
convex for some }\kappa>0. \label{eq:strong-source}
\end{equation}
This is equivalent to
\begin{equation}
V\left(  y\right)  \geqslant V\left(  x\right)  +p\cdot\left(  y-x\right)
+\frac{\kappa}{2}\left\vert y-x\right\vert ^{2},\text{ }\forall x,\text{
}y\text{ and }p\in\partial V\left(  x\right)  . \label{eq:strong-source1}
\end{equation}
The assumption guarantees that $V$ is coercive and has a unique minimizer
$x_{\ast}$. Since $0\in\partial V(x_{\ast})$, $\left(  \ref{eq:strong-source1}
\right)  $ gives $V\left(  x\right)  \geqslant V\left(  x_{\ast}\right)
+\kappa\left\vert x-x_{\ast}\right\vert ^{2}/2$. Thus
\begin{equation}
0<f\left(  x\right)  \leqslant\mathcal{Z}^{-1}e^{-V\left(  x_{\ast}\right)
}e^{-\kappa\left\vert x-x_{\ast}\right\vert ^{2}/2}.
\label{eq:Gaussian-envelope}
\end{equation}

\begin{theorem}
[Potential and map stability]\label{thm:strong-main}Assume
\ref{eq:strong-source}. Then the following hold.

$\left(  i\right)  $ For any $\mu\in\mathcal{P}_{2}(\mathbb{R}^{d})$ there
is a unique finite convex function $u_{\mu}:\mathbb{R}^{d}\mapsto
\mathbb{R}$ satisfying $u_{\mu}\in L^{2}\left(  \rho\right)  $,
\[
\left(  \nabla u_{\mu}\right)  _{\#}\rho=\mu\text{,}
\]
and is normalized to have zero mean $\int u_{\mu}d\rho=0.$ Its gradient is the
quadratic Brenier map $T_{\mu}$.

$\left(  ii\right)  $ For all $\mu,\nu\in\mathcal{P}_{2}(\mathbb{R}^{d})$,
\begin{equation}
\Vert u_{\mu}-u_{\nu}\Vert_{L^{2}(\rho)}\leqslant\sqrt{2/\kappa}W_{2}\left(
\mu,\nu\right)  . \label{eq:strong-potential}
\end{equation}
The potential constant is independent of target support, atom count, atom
masses, and site separation.

$\left(  iii\right)  $ The density $f$ belongs to $BV(\mathbb{R}^{d})$. If
both targets are supported in a common closed ball of radius $L>0$, then
\begin{equation}
\left\Vert T_{\mu}-T_{\nu}\right\Vert _{L^{2}(\rho)}\leqslant2\sqrt{2}\left(
2/\kappa\right)  ^{1/6}L^{2/3}B_{f}^{1/3}W_{2}\left(  \mu,\nu\right)  ^{1/3}.
\label{eq:strong-map}
\end{equation}

\end{theorem}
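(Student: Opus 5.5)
We prove the three parts in order, with (ii) reduced to finitely supported targets and (iii) obtained from (ii) by a BV interpolation.

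\emph{Part (i).} We invoke Brenier--McCann: $\rho$ is absolutely continuous and $\mu\in\mathcal{P}_2$, so there is a convex $\phi$ with $(\nabla\phi)_{\#}\rho=\mu$, and $\nabla\phi$ is unique $\rho$-a.e. Since $\rho$ has full support, $\phi$ is finite on the interior of its domain, which must be all of $\mathbb{R}^d$. Indeed, $\rho(\operatorname{dom}\phi)=1$, a convex set of full $\rho$-measure is dense, and a dense convex set has full interior. $\nabla\phi$ is determined a.e. on the connected set $\mathbb{R}^d$, so $\phi$ is unique up to an additive constant. For integrability we use the Gaussian envelope (\ref{eq:Gaussian-envelope}). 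We have $|\phi(x)-\phi(0)|\leqslant |x|\,\sup_{[0,x]}|\nabla\phi|$ along segments; more robustly, convexity gives $\phi(x)\geqslant\phi(0)+p_0\cdot x$, and the upper bound $\phi(x)\leqslant \phi(y)+\nabla\phi(x)\cdot(x-y)$ can be averaged in $y$ over a ball. Together these show $\phi\in L^2(\rho)$, because $\int|\nabla\phi|^2d\rho=\int|y|^2d\mu<\infty$ and $\rho$ has Gaussian moments. We then subtract the mean to get $u_\mu$.

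\emph{Part (ii).} First reduce to finitely supported $\mu,\nu$. Approximate general targets in $W_2$ by discrete ones and use stability of potentials in $L^2(\rho)$ under $W_2$-convergence, which follows qualitatively from a.e. convergence of normalized convex potentials together with uniform $L^2$ bounds from the moment bounds in (i). For discrete targets $\mu=\sum m_i\delta_{y_i}$ we use the Laguerre-cell parametrization. Take the optimal coupling between $\mu$ and $\nu$ and interpolate linearly in the sites and masses along the family of discrete measures $\mu_t$ (splitting atoms so that both share an index set). We then compute $\frac{d}{dt}u_{\mu_t}$ via the cell calculus of Proposition~\ref{prop:finite-strong}. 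The velocity on cell $i$ is $x\cdot v_i+r_i$; after centering it becomes $(x-b_i)\cdot v_i$ plus a mean-zero correction. Its $L^2(\rho)$ norm is controlled by the conditional Brascamp--Lieb inequality (\ref{eq:conditional-covariance}), $\operatorname{Var}_{\rho|C_i}(x\cdot v)\leqslant|v|^2/\kappa$. The remaining term involves barycentre variations, which Theorem~\ref{thm:barycenter} bounds by mass derivatives with constant $1/\kappa$. The test direction $q$ is solved from (\ref{eq:solve-test-direciton}) via Lemma~\ref{lem:weighted-calculus}, so no spectral-gap constant enters. This gives $\|\partial_t u_{\mu_t}\|_{L^2(\rho)}\leqslant\sqrt{2/\kappa}\,(\sum m_i|\dot y_i|^2)^{1/2}$, and integrating in $t$ yields (\ref{eq:strong-potential}). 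The main obstacle is exactly this finite estimate: obtaining the sharp constant with no dependence on remote or tiny cells. It rests on the centering and on Proposition~\ref{prop:finite-strong}, which we take as given.

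\emph{Part (iii).} $f$ is log-concave with Gaussian decay, and $Df=-f\nabla V$ in the distributional sense. Hence $|Df|(\mathbb{R}^d)=\int|\nabla V|f\,dx<\infty$, since $|\nabla V|$ grows at most linearly on the relevant mass. Alternatively, superlevel sets of $f$ are convex, so the coarea formula gives $|Df|=\int_0^{\max f}\mathcal{H}^{d-1}(\partial\{f>s\})ds<\infty$. We set $B_f=|Df|(\mathbb{R}^d)$.

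For the interpolation, put $w=u_\mu-u_\nu$. Then $\nabla w$ is bounded by $2L$, because the gradients lie in the ball of radius $L$, up to a translation of the centre, which cancels in the difference. Mollify: $\|\nabla w-\nabla w_\epsilon\|_{L^2(\rho)}^2\leqslant 2L\int|\nabla w-\nabla w_\epsilon|f$. Monotonicity of gradients, $\nabla w=\nabla u_\mu-\nabla u_\nu$, and an integration by parts bound this by $C L^2\epsilon B_f$, as in \cite[Proposition 4.1]{delalande2023quantitative}. Integrating by parts again,
\[
\int|\nabla w_\epsilon|^2f\lesssim \|w\|_{L^2(\rho)}\,\epsilon^{-1}L\,(\ldots)+\ldots,
\]
again using $B_f$ to absorb derivatives falling on $f$. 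Optimizing $\epsilon\sim(\|w\|/(LB_f))^{2/3}$-type then yields $\|\nabla w\|^2\lesssim L^{4/3}B_f^{2/3}\|w\|^{2/3}$. Combined with (ii) this gives (\ref{eq:strong-map}); tracking constants produces the factor $2\sqrt2(2/\kappa)^{1/6}$.
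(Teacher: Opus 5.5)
Your finite-target core of (ii) comes from the finite-target proposition, as in the paper. Your Brenier--McCann argument for finiteness and uniqueness up to constants in (i) is fine. The gaps are in the interpolation step of (iii) and in the $L^2(\rho)$ claim of (i).

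In (iii), the mollification scheme does not yield the stated inequality. Integrating $\int|\nabla w_\epsilon|^2f$ (or $\int\nabla w\cdot\nabla w_\epsilon\,f$) by parts produces the term $\int w\,\nabla w_\epsilon\cdot dDf$. Here $B_f=|Df|(\mathbb{R}^d)$ controls it only through $\sup|w|$. With only $\|w\|_{L^2(\rho)}$ available, Cauchy--Schwarz brings in $\int|\nabla V|^2f$, which is a different quantity and is not controlled by $B_f$. Moreover, balancing your listed terms $\epsilon L^2B_f$ and $\epsilon^{-1}L\|w\|_{L^2(\rho)}$ gives at best $E_f\lesssim L^{3/2}B_f^{1/2}\|w\|_{L^2(\rho)}^{1/2}$, i.e.\ map exponent $1/4$. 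Your choice $\epsilon\sim(\|w\|/(LB_f))^{2/3}$ does not produce $L^{4/3}B_f^{2/3}\|w\|^{2/3}$.

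The missing idea is to truncate in the range rather than smooth in space. Take smooth convex $L$-Lipschitz $u,v$ and $\tau_t(s)=\max\{-t,\min\{s,t\}\}$. Integrating by parts against $\tau_t(w)$, which is bounded by $t$, and using the convexity bound $\int f\Delta u\leqslant LB_f$, gives $\int_{\{|w|<t\}}|\nabla w|^2f\leqslant 4LB_ft$. Chebyshev gives $\int_{\{|w|\geqslant t\}}|\nabla w|^2f\leqslant 4L^2t^{-2}\|w\|_{L^2(\rho)}^2$. Optimizing in $t$ yields $E_f\leqslant 8L^{4/3}B_f^{2/3}\|w\|_{L^2(\rho)}^{2/3}$ (\ref{thm:interpolation} with $q=2$), and only this form, combined with (ii), produces the constant $2\sqrt2(2/\kappa)^{1/6}$.

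A smaller point in (iii): $|\nabla V|$ need not grow linearly (e.g.\ $V=|x|^4+\frac{\kappa}{2}|x|^2$). Your coarea alternative does work once you bound the perimeter of the convex set $\{f>s\}$ by that of the ball containing it, which the Gaussian envelope provides.

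In (i), the bound $\phi(x)\leqslant\phi(y)+\nabla\phi(x)\cdot(x-y)$, averaged in $y$ or not, only gives $\phi^+(x)\lesssim 1+|\nabla\phi(x)|(1+|x|)$. But $\int|\nabla\phi|^2|x|^2d\rho$ need not be finite for $\mu\in\mathcal{P}_2$. For the one-dimensional Gaussian source and $\mu((s,\infty))=s^{-2}(\log s)^{-2}$ for $s\geqslant e$, the monotone map satisfies $T^2\varphi\sim 16x^{-3}$ but $T^2x^2\varphi\sim 16x^{-1}$ as $x\to\infty$. So $u_\mu\in L^2(\rho)$ is not established.

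The claim is true, but it needs one of two arguments:
\begin{itemize}
\item the Poincar\'e inequality $\operatorname{Var}_\rho(\phi)\leqslant\kappa^{-1}\int|\nabla\phi|^2d\rho$, applied to truncations of $\phi$; or
\item the paper's route: quantize $\mu$, note from the finite-target proposition that the discrete potentials are Cauchy in $L^2(\rho)$ with $\|u_{\mu_n}\|_{L^2(\rho)}\leqslant\sqrt{2/\kappa}\,W_2(\mu_n,\delta_0)$, and identify the limit as a finite convex Brenier potential via local compactness of convex functions.
\end{itemize}

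The same argument repairs your reduction in (ii). There, a.e.\ convergence with uniform $L^2(\rho)$ bounds does not give $L^2$ convergence. Fatou would suffice for the inequality, but you would still have to prove a.e.\ convergence of the normalized potentials. Your uniform bounds were also meant to come from the flawed step in (i).
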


\begin{theorem}
[Sharpness at the fixed Gaussian]\label{thm:Gaussian-main} Let $d\geqslant2$
and let $\rho=\gamma_{d}$ be the standard Gaussian probability on
$\mathbb{R}^{d}$. There are targets $\mu_{h},\nu_{h}$ supported in
the closed unit ball, for all sufficiently small $h>0$, and positive constants
$\kappa_{0},\kappa_{1},\kappa_{2}$ such that, as $h\rightarrow0,$
\[
W_{2}\left(  \mu_{h},\nu_{n}\right)  \sim\kappa_{0}h^{3/2},
\]
\[
\left\Vert T_{\mu}-T_{\nu}\right\Vert _{L^{2}(\gamma_{d})}\sim\kappa
_{1}h^{1/2},
\]
\[
\left\Vert u_{\mu_{h}}-u_{\nu_{h}}\right\Vert _{L^{2}\left(  \gamma
_{d}\right)  }\sim\kappa_{3}h^{3/2}.
\]
Here $u_{\mu_{h}},u_{\nu_{h}}$ are the zero mean potentials of
Theorem~\ref{thm:strong-main}. Consequently no
map exponent larger than $1/3$ can replace $1/3$ in $\left(
\ref{eq:strong-map}\right)  $. The same family also realizes linear order
potential stability and $1/3$ conversion from potentials to maps.
\end{theorem}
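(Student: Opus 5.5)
The plan is to build an explicit three-atom family in the plane and extend it to $\mathbb{R}^{d}$ as cylinders. The mechanism is a middle atom of small mass $h$ sitting on the segment between two heavy atoms. For $\mu_{h}$, the middle atom lies exactly on that segment, so its Laguerre cell is a thin slab. For $\nu_{h}$, the middle atom is displaced by $h$ transversally. This displacement costs only $W_{2}^{2}\approx h\cdot h^{2}=h^{3}$, yet it tilts the two faces of the thin cell by angles $\pm h$. Over the bulk $|x_{2}|\lesssim 1$ of the Gaussian, a tilt of $\pm h$ shifts the faces by $O(h)$, which is comparable to the cell width itself. So the map changes by an $O(1)$ jump on a set of measure $\asymp h$.

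\textbf{Construction.} Write $x=(x_{1},x_{2},x')$ with $x'\in\mathbb{R}^{d-2}$, and let $\gamma_{d}=\gamma_{1}\otimes\gamma_{1}\otimes\gamma_{d-2}$. I would take
\[
\mu_{h}=\tfrac{1-h}{2}\bigl(\delta_{(-\frac12,0,0)}+\delta_{(\frac12,0,0)}\bigr)+h\,\delta_{(0,0,0)},\qquad
\nu_{h}=\tfrac{1-h}{2}\bigl(\delta_{(-\frac12,0,0)}+\delta_{(\frac12,0,0)}\bigr)+h\,\delta_{(0,\frac h2,0)} .
\]
Both measures are supported in the closed unit ball. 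The coupling that only moves the middle atom gives $W_{2}(\mu_{h},\nu_{h})\leqslant h^{1/2}\cdot h/2$. For the matching lower bound I would use a dual test: take a $1$-Lipschitz function equal to $x_{2}$ near the middle atom and $0$ near the outer atoms. This gives $W_{1}\gtrsim h^{2}/2$, which alone is too weak. Instead I would argue directly that any coupling must move mass $\approx h$ by $\approx h/2$, since the outer atoms are at distance $1/2$. Then $W_{2}^{2}=h^{3}/4\,(1+o(1))$, so $\kappa_{0}=1/2$. Also $W_{2}\leqslant h^{3/2}/2$ exactly.

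\textbf{Cells and maps.} Because everything is independent of $x'$, the Brenier potentials are of the form $\max_{i}\{x\cdot y_{i}-\psi_{i}\}$, and the cells are cylinders over planar Laguerre cells.
\begin{itemize}
\item For $\mu_{h}$, symmetry gives the middle cell $\{|x_{1}|<a_{h}\}$ with $\gamma_{1}((-a_{h},a_{h}))=h$, so $a_{h}\sim h\sqrt{2\pi}/2$.
\item For $\nu_{h}$, symmetry in $x_{1}$ forces the outer weights to be equal. The middle cell is then $\{-\tfrac12x_{1}<\tfrac h2x_{2}-c,\ \tfrac12x_{1}<\tfrac h2x_{2}-c\}$, that is $\{|x_{1}|<hx_{2}+b_{h}\}$. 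Here $b_{h}$ is fixed by the mass condition $\int\gamma_{1}((-(hx_{2}+b)_{+},(hx_{2}+b)_{+}))\,d\gamma_{1}(x_{2})=h$.
\end{itemize}
Setting $b_{h}=h\beta$, the mass equation reads $\mathbb{E}(x_{2}+\beta)_{+}=1/\sqrt{2\pi}$ at leading order. This has a unique solution $\beta>0$, which differs from $\sqrt{2\pi}/2$. The symmetric difference of the two middle cells is the region where $|x_{1}|$ lies between $a_{h}$ and $(hx_{2}+b_{h})_{+}$. Its $\gamma_{d}$-measure is $h\,c_{1}(1+o(1))$ with $c_{1}=\sqrt{2/\pi}\,\mathbb{E}\bigl|(x_{2}+\beta)_{+}-\sqrt{2\pi}/2\bigr|>0$. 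On this set $|T_{\mu}-T_{\nu}|=1/2+O(h)$, and elsewhere $|T_{\mu}-T_{\nu}|\leqslant h/2$ on the middle cell only. This yields $\|T_{\mu}-T_{\nu}\|_{L^{2}}^{2}=\tfrac14c_{1}h(1+o(1))$, hence $\kappa_{1}=\sqrt{c_{1}}/2$.

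\textbf{Potentials.} The upper bound $\|u_{\mu_{h}}-u_{\nu_{h}}\|\leqslant\sqrt2\,W_{2}=O(h^{3/2})$ follows from Theorem~\ref{thm:strong-main}(ii) with $\kappa=1$. For the matching lower bound I would compute $u_{\mu_{h}}-u_{\nu_{h}}$ explicitly from the dual weights:
\begin{itemize}
\item the outer weights of both potentials differ only by an $O(h^{2})$ constant, which the zero-mean normalization handles;
\item on the outer cells the difference is therefore constant up to $O(h^{2})$;
\item on the middle strip it is $O(h^{2})$ in magnitude but supported on a set of measure $O(h)$.
\end{itemize}
After centering, the $L^{2}$ norm squared is $h\cdot O(h^{2})=O(h^{3})$. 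A leading-order computation of the profile $\tfrac12(|x_{1}|-a_{h})_{+}$ versus $\tfrac12(|x_{1}|-hx_{2}-b_{h})_{+}$, plus the $\tfrac h2x_{2}$ term on the middle cell, yields an explicit positive constant $\kappa_{3}$.

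I expect this last step to be the main obstacle. It requires tracking the $O(h^{2})$ weight corrections and the centering carefully enough to show that the leading coefficient does not vanish. Once all three asymptotics are in place, the consequence follows directly. Comparing $h^{1/2}$ with $W_{2}^{1/3}\asymp h^{1/2}$ shows that no exponent above $1/3$ is possible in \eqref{eq:strong-map}. Comparing $h^{3/2}$ for both $W_{2}$ and the potential difference shows that the potential bound is linear order, and the potential-to-map conversion has exponent exactly $1/3$.
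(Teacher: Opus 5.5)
Your construction is the paper's up to rescaling (heavy atoms at $\pm\tfrac12 e_1$, a light middle atom pushed transversally so that its slab tilts with slope $h$), and your route for $W_2$, the maps and the potentials is the same as the paper's: a transverse-coordinate lower bound, the symmetric difference of the middle cells, and the rescaling $x_1=hs$. There are two slips to fix: the leading-order mass equation is $\mathbb{E}(Z+\beta)_+=\sqrt{\pi/2}$ with $Z\sim\gamma_1$, not $1/\sqrt{2\pi}=\mathbb{E}Z_+$ (which would force $\beta=0$), and $u_{\mu_h}-u_{\nu_h}$ is of size $h$, not $h^2$, on the strip; once these are corrected, the step you flag is routine, since $h^{-3}\|u_{\mu_h}-u_{\nu_h}\|_{L^2(\gamma_d)}^2\to\tfrac{\varphi(0)}{4}\iint\bigl(\max\{|s|,\alpha\}-\max\{|s|,z+\beta\}\bigr)^2\varphi(z)\,ds\,dz>0$ with $\alpha=\sqrt{\pi/2}$ and $\varphi$ the density of $Z$, while centering only subtracts $O(h^4)$.
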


\section{Calculations on weighted cells}

Fix $N\in\mathbb{N}$, distinct sites $y=\left(  y_{1},\ldots,y_{N}\right)
\in\left(  \mathbb{R}^{d}\right)  ^{N}$ and intercepts $c=\left(  c_{1}
,\ldots,c_{N}\right)  \in\mathbb{R}^{N}$. Let
\begin{equation}
u_{c,y}\left(  x\right)  =\max_{1\leqslant i\leqslant N}\left\{  x\cdot
y_{i}+c_{i}\right\}  . \label{eq:def-u-cy}
\end{equation}
It defines the associated power cells. On the region where the $i$-th linear
function strictly dominates, we have $\nabla u_{c,y}=y_{i}$. Thus source mass
in that region is sent to $y_{i}$, and by moving the boundaries the intercepts
$c_{i}$ adjust how much mass is sent there. The cells form a partition of
$\mathbb{R}^{d}$ with the $i$-th being
\begin{equation}
P_{i}\left(  c,y\right)  =\left\{  x\in\mathbb{R}^{d}:x\cdot y_{i}
+c_{i}\geqslant x\cdot y_{j}+c_{j},\text{ }\forall j\right\}  .
\label{eq:Def-Pi}
\end{equation}
These cells are convex polyhedra. See \cite[Remark 17]{merigot2021optimal} for
a Laguerre description. The source $\rho=fdx$ gives weight to each cell by how
much mass the cell contains
\begin{equation}
M_{i}\left(  c,y\right)  =\int_{P_{i}\left(  c,y\right)  }f\left(  x\right)
dx. \label{eq:def-Mi}
\end{equation}
The barycenter of each cell is thus
\begin{equation}
b_{i}=\frac{B_{i}\left(  c,y\right)  }{M_{i}\left(  c,y\right)  }\text{ with
}B_{i}\left(  c,y\right)  =\int_{P_{i}\left(  c,y\right)  }xf\left(  x\right)
dx. \label{eq:def-Bi}
\end{equation}
For the definition to make sense, the cell must have positive mass
$M_{i}\left(  c,y\right)  >0.$ So the collection of sites and intercepts we
work with are
\[
\mathcal{Y}_{N}=\left\{  y\in\left(  \mathbb{R}^{d}\right)  ^{N}:y_{i}\neq
y_{j}\text{ if }i\neq j\right\}  ,
\]
\[
\Omega_{N}=\left\{  \left(  c,y\right)  \in\mathbb{R}^{N}\times\mathcal{Y}
_{N}:M_{i}\left(  c,y\right)  >0\text{ for all }i\right\}  .
\]

The first thing we need to consider is how the cell mass and barycenter
respond to parameter pertubrations, this is the topic of the following lemma.

\begin{lemma}
\label{lem:weighted-calculus}Assume $\left(  \ref{eq:strong-source}\right)  $
holds. Fix $N\in\mathbb{N}$ and $\left(  c,y\right)  \in\mathbb{R}^{N}
\times\mathcal{Y}_{N}$. For $1\leqslant i\leqslant N,$ write
\begin{equation}
P_{i}=P_{i}\left(  c,y\right)  ,\text{ }M_{i}=M_{i}\left(  c,y\right)  ,\text{
}B_{i}=B_{i}\left(  c,y\right)  . \label{eq:short-Pi-Mi-Bi}
\end{equation}
Then the following hold.

(i) The set $\Omega_{N}$ is open and $M_{i},$ $B_{i}\in C^{1}\left(
\Omega_{N}\right)  $ for all $i.$

(ii) For fixed $(c,y)\in\Omega_{N}$, let $i,j\in\left\{  1,\ldots,N\right\}
$, $F_{ij}=P_{i}\cap P_{j}$. Then for $q\in\mathbb{R}^{N}$, $v\in
(\mathbb{R}^{d})^{N}$, the directional derivatives are given by
\begin{equation}
DM_{i}\left[  q,v\right]  =\sum_{j\neq i}\int_{F_{ij}}\frac{\left(
q_{i}-q_{j}\right)  +x\cdot(v_{i}-v_{j})}{\left\vert y_{i}-y_{j}\right\vert
}f\left(  x\right)  d\mathcal{H}^{d-1}\left(  x\right)  ,
\label{eq:mass-derivative}
\end{equation}
\begin{equation}
DB_{i}\left[  q,v\right]  =\sum_{j\neq i}\int_{F_{ij}}\frac{\left(
q_{i}-q_{j}\right)  +x\cdot(v_{i}-v_{j})}{\left\vert y_{i}-y_{j}\right\vert
}xf\left(  x\right)  d\mathcal{H}^{d-1}\left(  x\right)  .
\label{eq:mm-derivative}
\end{equation}

(iii) The linear map $G:\mathbb{R}^{N}\mapsto\mathbb{R}^{N}$ defined by
$Gq=\left(  DM_{i}\left[  q,0\right]  \right)  _{i=1}^{N}$ has kernel
$\operatorname{span}\left\{  \left(  1,\ldots,1\right)  \right\}  $ and range
\[
\left\{  z\in\mathbb{R}^{N}:\sum_{i}z_{i}=0\right\}  .
\]

\end{lemma}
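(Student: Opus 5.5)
The approach is to reduce everything to the variation of half-space integrals against the Gaussian-dominated density $f$. Each cell is a finite intersection $P_i(c,y)=\bigcap_{j\neq i}H_{ij}$ of the closed half-spaces
\[
H_{ij}=\{x:x\cdot(y_i-y_j)+c_i-c_j\geqslant0\}.
\]
A perturbation $(c+tq,y+tv)$ moves the hyperplane $\partial H_{ij}$ with normal velocity $\bigl((q_i-q_j)+x\cdot(v_i-v_j)\bigr)/|y_i-y_j|$ at the point $x$. We therefore first prove (ii) by the standard Reynolds or coarea computation for polyhedra and then deduce (i).

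Concretely, we fix $(c,y)\in\mathbb{R}^N\times\mathcal{Y}_N$ and write
\[
M_i(c+tq,y+tv)-M_i(c,y)=\int\bigl(\mathbf 1_{P_i^t}-\mathbf 1_{P_i}\bigr)f\,dx .
\]
We split this difference as a sum over $j$ of the thin slabs swept by the moving hyperplanes. The overlaps where two or more constraints change simultaneously lie near codimension-two faces $P_i\cap P_j\cap P_k$, and they contribute $O(t^2)$ locally in space. Globally we control them using the envelope (\ref{eq:Gaussian-envelope}): for $|x|\leqslant R$ the slab widths are $O(t(1+R))$ and the overlaps are $O(t^2(1+R)^2R^{d-2})$, while the region $|x|>R$ has mass $O(e^{-\kappa R^2/4})$ even after multiplication by the polynomial factors $|x|$, $|x|^2$. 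Dividing by $t$ and letting $t\to0$ and then $R\to\infty$ gives (\ref{eq:mass-derivative}). The same argument with $xf$ gives (\ref{eq:mm-derivative}), because the Gaussian tail absorbs the extra polynomial weight.

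We also need $f$ to be continuous in this argument, since we integrate it over the moving hyperplanes. This holds because $V$ is a finite convex function and hence continuous. It follows that $f|_{\partial H}$ is integrable and that the hyperplane integrals $\int_{F_{ij}}(1+|x|)^kf\,d\mathcal H^{d-1}$ depend continuously on $(c,y)$: the faces vary continuously in the Hausdorff sense on compact sets, and we use dominated convergence with the Gaussian envelope. When $F_{ij}$ is lower dimensional its $\mathcal H^{d-1}$ measure is zero, and this is consistent with the formulas.

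Continuity of the directional derivatives, which are linear in $(q,v)$, gives $C^1$ regularity, and continuity of $M_i$ gives that $\Omega_N$ is open. I expect the main obstacle to be the uniform control of face integrals as faces appear or degenerate. What rescues the argument is the continuity of $(c,y)\mapsto\mathcal H^{d-1}\llcorner F_{ij}$ tested against $f$. We would prove it by writing $F_{ij}$ inside the hyperplane as a polyhedron whose defining inequalities vary continuously, so that the indicator converges almost everywhere on the limit hyperplane.

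For (iii) we work at points of $\Omega_N$, where the statement is intended. Set $w_{ij}=\int_{F_{ij}}f\,d\mathcal H^{d-1}/|y_i-y_j|\geqslant0$, which is symmetric in $i,j$. Then
\[
(Gq)_i=\sum_j w_{ij}(q_i-q_j),
\]
so $G$ is a symmetric weighted graph Laplacian with
\[
q\cdot Gq=\tfrac12\sum_{i,j}w_{ij}(q_i-q_j)^2 .
\]
Its kernel is spanned by $(1,\ldots,1)$ exactly when the graph with edges $\{w_{ij}>0\}$ is connected. Connectivity holds because $f>0$ everywhere and every cell has positive mass, so each $P_i$ has nonempty interior. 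Suppose the index set split into two classes $I,J$ with no positive-weight face between them. Then $\bigcup_{I}P_i$ and $\bigcup_{J}P_j$ would be closed sets covering $\mathbb{R}^d$ with nonempty interiors and an intersection of $\mathcal H^{d-1}$-measure zero. That is impossible: $\mathbb{R}^d$ minus a $\mathcal H^{d-1}$-null closed set is connected. Finally, the range of the symmetric matrix $G$ is the orthogonal complement of its kernel, which is $\{z:\sum_iz_i=0\}$.
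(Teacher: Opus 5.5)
Your proposal is correct, up to two points in (ii) that you should state explicitly. For (i)--(ii) it uses the same mechanism as the paper: first-order slab contributions become face integrals in coordinates transverse to $\{\ell_{ij}=0\}$, using continuity of $f$ and domination by (\ref{eq:Gaussian-envelope}). The difference is how cross terms are treated. The paper uses the exact identity of Lemma~\ref{lem:alg-product}, $1_{P_i^{\epsilon}}-1_{P_i}=\sum_{j\neq i}(H_{ij}^{\epsilon}-H_{ij})R_{ij}^{\epsilon}$. With it no overlap error appears, and the mixed product $R_{ij}^{\epsilon}$ is handled through its pointwise limit $1_{F_{ij}}$. You instead expand to first order and discard multi-constraint overlaps as $o(t)$ on $B(0,R)$. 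That works, given the following two points.

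First, outside $B(0,R)$ your bound must be $t$ times a Gaussian tail, since the slab width at $x$ is $O(t(1+|x|))$. The raw tail mass $O(e^{-\kappa R^{2}/4})$ is not enough: after dividing by $t$ at fixed $R$ it blows up as $t\to0$.

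Second, the overlap estimate requires that two constraint hyperplanes $\{\ell_{ij}=0\}$, $\{\ell_{ik}=0\}$ of the same cell never coincide. The same is needed for the a.e.\ identification of the limiting indicator with $1_{F_{ij}}$. Their normals are parallel for collinear sites. Coincidence is excluded only because it would force the middle site's cell into a hyperplane, contradicting $(c,y)\in\Omega_N$. Outside $\Omega_N$ the formula can genuinely fail. Take $y=(-e_1,0,e_1)$, $c=0$ and $q=(0,1,0)$: the one-sided derivatives of $M_2$ differ. So do not claim (ii) on all of $\mathbb{R}^N\times\mathcal{Y}_N$. Also, get openness of $\Omega_N$ from a direct dominated-convergence proof that each $M_i$ is continuous, as in the paper's Step 2, before speaking of $C^1$ on $\Omega_N$.

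For (iii) your route differs from the paper's, and it is the one that actually works. The paper asserts $g_{ij}>0$ for all $i\neq j$, which is false for non-adjacent cells. With $y=(-e_1,0,e_1)$ and $c=(0,1,0)$, all three masses are positive but $F_{13}=\emptyset$. From $q\cdot Gq=\sum_{i<j}g_{ij}(q_i-q_j)^2=0$ one only gets $q_i=q_j$ along edges with $g_{ij}>0$. So connectivity of the adjacency graph is exactly the missing ingredient, and your argument supplies it. Since $f>0$, zero weight means $\mathcal{H}^{d-1}(F_{ij})=0$. A closed $\mathcal{H}^{d-1}$-null set cannot separate $\mathbb{R}^d$: its projection onto $e^{\perp}$ is null, so almost every segment parallel to $e$ misses it. You are also right that (iii) must be read on $\Omega_N$. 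If some $P_j$ is empty, the $j$-th standard basis vector of $\mathbb{R}^N$ lies in $\ker G$.
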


\begin{proof}
\textbf{1}. If $N=1$, then $\mathcal{Y}_{1}=P_{1}=\mathbb{R}^{d},$ $\Omega
_{1}=\mathbb{R}\times\mathbb{R}^{d},$ $M_{1}=1$, $B_{1}=\int xd\rho$ and
$G=0.$ The conclusion follows trivially. Henceforth, we assume $N\geqslant2.$

\textbf{2}. First we show that $M_{i}$, $B_{i}$ are continuous on $\Omega_{N}
$. Let $(c,y)\in\Omega_{N}$ and $\left(  c^{\left(  n\right)  },y^{\left(
n\right)  }\right)  $ converge to $(c,y)$. Write
\[
P_{i}^{\left(  n\right)  }=P_{i}\left(  c^{\left(  n\right)  },y^{\left(
n\right)  }\right)  ,\text{ }M_{i}^{\left(  n\right)  }=M_{i}\left(
c^{\left(  n\right)  },y^{\left(  n\right)  }\right)  ,\text{ }B_{i}^{\left(
n\right)  }=B_{i}\left(  c^{\left(  n\right)  },y^{\left(  n\right)  }\right)
.
\]
Since $\left\{  y_{i}\right\}  $ are distinct by the definition of $\Omega
_{N}$, the intersections $F_{ij}$ $\left(  i\neq j\right)  $ of distinct cells
has $d-1$ dimension$,$ thus $F=\cup_{i\neq j}F_{ij}$ has null Lebesgue mass.
Outside the null set $F$, the indicators on $P_{i}^{\left(  n\right)  }$
converge pointwise to $P_{i}.$ The dominated convergence theorem therefore
gives
\[
M_{i}^{\left(  n\right)  }\rightarrow M_{i}\text{ and }B_{i}^{\left(
n\right)  }\rightarrow B_{i},
\]
showing the desired continuity. This also shows that $\Omega_{N}$ is open.

\textbf{3}. Fix $i\in\left\{  1,\ldots,N\right\}  $, $(c,y)\in\Omega_{N}.$ To
prove that $M_{i}$, $B_{i}$ are differentiable on $\Omega_{N},$ it is
sufficient to show that their directional derivatives are continuous in
$\Omega_{N}.$ Thus consider non-zero directional vectors $q\in\left(
\mathbb{R}^{d}\right)  ^{N},$ $v\in\mathbb{R}^{d}.$ For sufficiently small
$\epsilon>0,$ consider the perturbations $c^{\epsilon}=c+\epsilon
q,y^{\epsilon}=y+\epsilon v$ and the perturbed cells $P_{i}^{\epsilon}
=P_{i}\left(  c^{\epsilon},y^{\epsilon}\right)  .$ Define
\[
\ell_{ij}\left(  x\right)  =x\cdot\left(  y_{i}-y_{j}\right)  +c_{i}
-c_{j},\text{ }\ell_{ij}^{\epsilon}\left(  x\right)  =\ell_{ij}\left(
x\right)  +\epsilon\left[  x\cdot\left(  v_{i}-v_{j}\right)  +q_{i}
-q_{j}\right]  ,
\]
and
\[
H_{ij}\left(  x\right)  =1_{\{\ell_{ij}\left(  x\right)  \geqslant0\}},\text{
}H_{ij}^{\epsilon}\left(  x\right)  =1_{\{\ell_{ij}^{\epsilon}\left(
x\right)  \geqslant0\}}.
\]
For a scalar function $\psi=1$ or $\psi(x)=x_{k}$ with $1\leqslant k\leqslant
d$, define
\[
I_{i,\psi}\left(  c,y\right)  =\int_{P_{i}}\psi\left(  x\right)  f\left(
x\right)  dx.
\]
Thus $I_{i,1}=M_{i}$ and $I_{i,x_{k}}=\left(  B_{i}\right)  _{k}$. The
derivatives of $M_{i}$, $B_{i}$ along the direction $\left(  q,v\right)  $ are
\begin{equation}
\frac{I_{i,\psi}\left(  c^{\epsilon},y^{\epsilon}\right)  -I_{i,\psi}\left(
c,y\right)  }{\epsilon}=\frac{1}{\epsilon}\int\psi\left(  x\right)  f\left(
x\right)  \left(  1_{P_{i}^{\epsilon}}-1_{P_{i}}\right)  \left(  x\right)  dx.
\label{eq:cell-quotient}
\end{equation}
Using Lemma~\ref{lem:alg-product} we can write
\begin{equation}
1_{P_{i}^{\epsilon}}-1_{P_{i}}=\prod_{k\neq i}H_{ik}^{\epsilon}-\prod_{k\neq
i}H_{ik}=\sum_{j\neq i}\left(  H_{ij}^{\epsilon}-H_{ij}\right)  R_{ij}
^{\epsilon}, \label{eq:indicator-telescope}
\end{equation}
where
\[
R_{ij}^{\epsilon}=\prod_{k\neq i,k<j}H_{ik}^{\epsilon}\prod_{k\neq
i,k>j}H_{ik}.
\]
The convention is that an empty product equals one. Substituting $\left(
\ref{eq:indicator-telescope}\right)  $ into $\left(  \ref{eq:cell-quotient}
\right)  $ yields
\begin{equation}
\frac{I_{i,\psi}\left(  c^{\epsilon},y^{\epsilon}\right)  -I_{i,\psi}\left(
c,y\right)  }{\epsilon}=\sum_{j\neq i}D_{ij,\psi}\left(  \epsilon\right)  ,
\label{eq:quotient-reduction}
\end{equation}
where
\[
D_{ij,\psi}\left(  \epsilon\right)  =\frac{1}{\epsilon}\int\psi\left(
x\right)  f\left(  x\right)  \left(  H_{ij}^{\epsilon}-H_{ij}\right)  \left(
x\right)  R_{ij}^{\epsilon}\left(  x\right)  dx.
\]
Since the sum $\left(  \ref{eq:quotient-reduction}\right)  $ only has finite
terms, it is enough to compute the limit of individual term $D_{ij,\psi
}\left(  \epsilon\right)  $ as $\epsilon\rightarrow0$.

\textbf{4}. In the following fix $j\neq i$, thus $\left\vert y_{i}
-y_{j}\right\vert >0.$ Let $n_{ij}$ be the unit direction of $y_{i}-y_{j},$
\[
n_{ij}=\frac{y_{i}-y_{j}}{\left\vert y_{i}-y_{j}\right\vert }.
\]
So each $x\in\mathbb{R}^{d}$ has a unique representation $x=z+tn_{ij}$ with
$z\cdot n_{ij}=0$ and $t\in\mathbb{R}.$ To figure out the integrand in
$D_{ij,\psi}\left(  \epsilon\right)  $ under the new coordinates, we compute
\begin{align*}
\ell_{ij}^{\epsilon}\left(  x\right)   &  =x\cdot\left(  y_{i}-y_{j}\right)
+\left(  c_{i}-c_{j}\right)  +\epsilon\left[  x\cdot\left(  v_{i}
-v_{j}\right)  +\left(  q_{i}-q_{j}\right)  \right] \\
&  =tn_{ij}\cdot\left(  y_{i}-y_{j}\right)  +\left(  c_{i}-c_{j}\right)
+\epsilon\left[  \left(  z+tn_{ij}\right)  \cdot\left(  v_{i}-v_{j}\right)
+\left(  q_{i}-q_{j}\right)  \right] \\
&  =t\left[  \left\vert y_{i}-y_{j}\right\vert +\epsilon n_{ij}\cdot\left(
v_{i}-v_{j}\right)  \right]  +\epsilon z\cdot\left(  v_{i}-v_{j}\right)
+\left(  c_{i}-c_{j}\right)  +\epsilon\left(  q_{i}-q_{j}\right)
\end{align*}
So $\ell_{ij}^{\epsilon}\left(  x\right)  \geqslant0$ is equivalent to
\[
t\geqslant t_{ij}^{\epsilon}\left(  z\right)  \triangleq-\frac{\left(
c_{i}-c_{j}\right)  +\epsilon\left(  q_{i}-q_{j}\right)  +\epsilon
z\cdot\left(  v_{i}-v_{j}\right)  }{\left\vert y_{i}-y_{j}\right\vert
+\epsilon n_{ij}\cdot\left(  v_{i}-v_{j}\right)  }.
\]
To make the notation consistent, we also define
\[
t_{ij}\triangleq t_{ij}^{0}\left(  z\right)  =-\frac{c_{i}-c_{j}}{\left\vert
y_{i}-y_{j}\right\vert }.
\]
So under the new coordinates,
\[
H_{ij}=1_{\{t\geqslant t_{ij}\}},\text{ }H_{ij}^{\epsilon}=1_{\{t\geqslant
t_{ij}^{\epsilon}\}}.
\]
Writing $F_{j}^{\epsilon}\left(  x\right)  =\psi\left(  x\right)  f\left(
x\right)  R_{j}^{\epsilon}\left(  x\right)  ,$ we have that $F_{j}^{\epsilon
}\left(  1_{\{t\geqslant t_{ij}^{\epsilon}\left(  z\right)  \}}
-1_{\{t\geqslant t_{ij}\}}\right)  $ is integrable and that
\begin{align*}
D_{ij,\psi}\left(  \epsilon\right)   &  =\frac{1}{\epsilon}\int F_{j}
^{\epsilon}\left(  x\right)  \left(  1_{\{\ell_{ij}^{\epsilon}\left(
x\right)  \geqslant0\}}-1_{\{\ell_{ij}\left(  x\right)  \geqslant0\}}\right)
dx\\
&  =\frac{1}{\epsilon}\int_{\mathbb{R}^{d-1}}\int_{\mathbb{R}}F_{j}^{\epsilon
}\left(  z+tn_{ij}\right)  \left(  1_{\{t\geqslant t_{ij}^{\epsilon}\left(
z\right)  \}}-1_{\{t\geqslant t_{ij}\}}\right)  dtdz\\
&  =\frac{1}{\epsilon}\int_{\mathbb{R}^{d-1}}\int_{t_{ij}^{\epsilon}\left(
z\right)  }^{t_{ij}}F_{j}^{\epsilon}\left(  z+tn_{ij}\right)  dtdz\\
&  =\int_{\mathbb{R}^{d-1}}\frac{t_{ij}-t_{ij}^{\epsilon}\left(  z\right)
}{\epsilon}\int_{0}^{1}F_{j}^{\epsilon}\left(  z+\left[  t_{ij}+s\left(
t_{ij}^{\epsilon}\left(  z\right)  -t_{ij}\right)  \right]  n_{ij}\right)
dsdz.
\end{align*}
To take limit $\epsilon\rightarrow0$, we analyze the integrand to determine
its dominant function and pointwise limit. \ We first determine the pointwise
limit of $F_{j}^{\epsilon}\left(  x_{s}^{\epsilon}\right)  =\psi\left(
x_{s}^{\epsilon}\right)  f\left(  x_{s}^{\epsilon}\right)  R_{j}^{\epsilon
}\left(  x_{s}^{\epsilon}\right)  $ as $\epsilon\rightarrow0$ with
$x_{s}^{\epsilon}=z+\left[  t_{ij}+s\left(  t_{ij}^{\epsilon}\left(  z\right)
-t_{ij}\right)  \right]  n_{ij}.$ For each $z\in\mathbb{R}^{d-1},$ the bound
$\left(  \ref{eq:t-epsilon-bound}\right)  $ indicates that $x_{s}^{\epsilon}$
converges uniformly to $z+t_{ij}n_{ij}$, which corresponds exactly to points
on $\left\{  x:\ell_{ij}\left(  x\right)  =0\right\}  .$ The geometry of
hyperplanes will help find the limit of $R_{j}^{\epsilon}\left(
x_{s}^{\epsilon}\right)  $. Observe that for any distinct tripple $i,j,k,$
\[
\left\{  \ell_{ij}=0\right\}  \cap\left\{  \ell_{ik}=0\right\}  \text{ has
zero }\mathcal{H}^{d-1}\text{-measure.}
\]
Indeed, the intersection of three or more $\mathbb{R}^{d}$-hyperplanes that
have distinct normal directions has zero $\mathcal{H}^{d-1}$-measure. So, up
to the $\mathcal{H}^{d-1}$-null set, $x_{s}^{\epsilon}$ eventually approaches
some point that sits on $\left\{  \ell_{ij}=0\right\}  $ but is strictly away
from other hyperplanes$.$ Therefore if $z+t_{ij}n_{ij}$ is outside the
preceding null set, then
\begin{equation}
R_{j}^{\epsilon}\left(  x_{s}^{\epsilon}\right)  =\prod_{k\neq i,k<j}
1_{\left\{  \ell_{ik}^{\epsilon}\left(  x_{s}^{\epsilon}\right)
\geqslant0\right\}  }\prod_{k\neq i,k>j}1_{\left\{  \ell_{ik}\left(
x_{s}^{\epsilon}\right)  \geqslant0\right\}  }\rightarrow\prod_{k\neq
i,j}1_{\left\{  \ell_{ik}\left(  z+t_{ij}n_{ij}\right)  >0\right\}  }.
\label{eq:R-epsilon-limit}
\end{equation}
Recall that $z+t_{ij}n_{ij}$ is on $\left\{  \ell_{ij}=0\right\}  .$ On this
set, the limit is $1$ if and only if $z+t_{ij}n_{ij}$ is strictly on the
positive side of all hyperplanes $\left\{  \ell_{ik}=0\right\}  $ with $k\neq
i,j$, in other words, if and only if $z+t_{ij}n_{ij}\in F_{ij}.$ Thus, up to a
$\mathcal{H}^{d-1}$-null set, the limit equals $1_{F_{ij}}$.

\textbf{5}. Straightforward calculations yield
\begin{align*}
\frac{t_{ij}-t_{ij}^{\epsilon}\left(  z\right)  }{\epsilon}  &  =\frac
{1}{\epsilon}\left[  \frac{\left(  c_{i}-c_{j}\right)  +\epsilon\left(
q_{i}-q_{j}\right)  +\epsilon z\cdot\left(  v_{i}-v_{j}\right)  }{\left\vert
y_{i}-y_{j}\right\vert +\epsilon n_{j}\cdot\left(  v_{i}-v_{j}\right)  }
-\frac{c_{i}-c_{j}}{\left\vert y_{i}-y_{j}\right\vert }\right] \\
&  =\frac{\left\vert y_{i}-y_{j}\right\vert \left(  q_{i}-q_{j}\right)
+\left\vert y_{i}-y_{j}\right\vert z\cdot\left(  v_{i}-v_{j}\right)
-n_{ij}\cdot\left(  v_{i}-v_{j}\right)  \left(  c_{i}-c_{j}\right)  }{\left[
\left\vert y_{i}-y_{j}\right\vert +\epsilon n_{ij}\cdot\left(  v_{i}
-v_{j}\right)  \right]  \left\vert y_{i}-y_{j}\right\vert }\\
&  =\frac{\left(  q_{i}-q_{j}\right)  +\left(  v_{i}-v_{j}\right)  \left[
z+t_{ij}n_{ij}\right]  }{\left\vert y_{i}-y_{j}\right\vert +\epsilon
n_{ij}\cdot\left(  v_{i}-v_{j}\right)  }.
\end{align*}
Recall that $v\in\mathbb{R}^{d}$ is the non-zero directional vector introduced
early. To make sure the denominator does not hit zero, we require that
$\epsilon$ be small enough so that
\begin{equation}
\epsilon<\frac{1}{2}\frac{\min_{j\neq i}\left\{  \left\vert y_{i}
-y_{j}\right\vert \right\}  }{\max_{j\neq i}\left\{  \left\vert n_{ij}
\cdot\left(  v_{i}-v_{j}\right)  \right\vert \right\}  }.
\label{eq:max-epsilon}
\end{equation}
So by the definition of $t_{ij},t_{ij}^{\epsilon}\left(  z\right)  ,$ there is
a constant $C_{c,y,q,v}>0$ depending on $c,y,q,v$ such that
\begin{equation}
\left\vert t_{ij}\right\vert +\left\vert t_{ij}^{\epsilon}\left(  z\right)
\right\vert \leqslant C_{c,y,q,v}\left(  1+\left\vert z\right\vert \right)
\label{eq:t-epsilon-bound}
\end{equation}
and
\[
\left\vert \frac{t_{ij}-t_{ij}^{\epsilon}\left(  z\right)  }{\epsilon
}\right\vert \leqslant C_{c,y,q,v}\left(  1+\left\vert z\right\vert \right)
.
\]
Moreover, still denote the variable of $F_{j}^{\epsilon}\left(  z+\left[
t_{ij}+s\left(  t_{ij}^{\epsilon}\left(  z\right)  -t_{ij}\right)  \right]
n_{ij}\right)  $ by $x_{s}^{\epsilon}=z+\left[  t_{ij}+s\left(  t_{ij}
^{\epsilon}\left(  z\right)  -t_{ij}\right)  \right]  n_{ij}.$ Let $x_{\ast}$
be from $\left(  \ref{eq:Gaussian-envelope}\right)  $ and $z_{\ast}$ is the
projection of $x_{\ast}$ in the direction orthogonal to $n_{ij}.$ Then
expanding $\left\vert z-2z_{\ast}\right\vert ^{2}/2\geqslant0$ and rearranging
yield $\left\vert z-z_{\ast}\right\vert ^{2}\geqslant\left\vert z\right\vert
^{2}/2-\left\vert z_{\ast}\right\vert ^{2}.$ Hence we have
\[
\left\vert x_{s}^{\epsilon}-x_{\ast}\right\vert ^{2}\geqslant\left\vert
z-z_{\ast}\right\vert ^{2}\geqslant\left\vert z\right\vert ^{2}/2-\left\vert
x_{\ast}\right\vert ^{2}.
\]
Thus by $\left(  \ref{eq:Gaussian-envelope}\right)  ,$
\[
f\left(  x_{s}^{\epsilon}\right)  \leqslant\mathcal{Z}^{-1}e^{-V\left(
x_{\ast}\right)  }e^{-\kappa\left\vert x-x_{\ast}\right\vert ^{2}/2}
\leqslant\mathcal{Z}^{-1}e^{-V\left(  x_{\ast}\right)  -\kappa\left\vert
x_{\ast}\right\vert ^{2}/2}e^{-\kappa\left\vert z\right\vert ^{2}/4}.
\]
The function $\psi\left(  x\right)  $ with all of its possible choice has
$\left\vert \psi\left(  x\right)  \right\vert \leqslant1+\left\vert
x\right\vert .$ So $\left\vert \psi\left(  x_{s}^{\epsilon}\right)
\right\vert \leqslant1+\left\vert z\right\vert +\left\vert t_{ij}\right\vert
+\left\vert t_{ij}^{\epsilon}\left(  z\right)  \right\vert $. Also $\left(
\ref{eq:R-epsilon-limit}\right)  $ provides a constant bound on $R_{j}
^{\epsilon}\left(  x_{s}^{\epsilon}\right)  .$ Putting together these bounds,
we obtain, for some constant $C>0$ which depends on $C_{c,y,q,v},V,x_{\ast
},\kappa$, that
\begin{equation}
\left\vert F_{j}^{\epsilon}\left(  x_{s}^{\epsilon}\right)  \right\vert
=\left\vert \psi\left(  x_{s}^{\epsilon}\right)  f\left(  x_{s}^{\epsilon
}\right)  R_{j}^{\epsilon}\left(  x_{s}^{\epsilon}\right)  \right\vert
\leqslant C\left(  1+\left\vert z\right\vert \right)  ^{2}e^{-\kappa\left\vert
z\right\vert ^{2}/4}. \label{eq:strip-majorant}
\end{equation}
With these estimates and pointwise limit determined earlier, the dominated
convergence theorem therefore gives, as $\epsilon\rightarrow0,$
\[
D_{ij,\psi}\left(  \epsilon\right)  \rightarrow\int_{\mathbb{R}^{d-1}}
\frac{\left(  q_{i}-q_{j}\right)  +\left(  v_{i}-v_{j}\right)  x}{\left\vert
y_{i}-y_{j}\right\vert }\psi(x)f\left(  x\right)  d\mathcal{H}^{d-1}\left(
x\right)  .
\]
Thus $\left(  \ref{eq:mass-derivative}\right)  $ and $\left(
\ref{eq:mm-derivative}\right)  $ are proved. Employing the dominated
convergent theorem again, we also see that the directional derivatives are
continuous in $\Omega_{N}$. Thus $M_{i},$ $B_{i}\in C^{1}\left(  \Omega
_{N}\right)  .$

\textbf{6}. For $q\in\mathbb{R}^{N}$, the derivative formula we just proved
gives
\[
\left(  Gq\right)  _{i}=DM_{i}\left[  q,0\right]  =\sum_{j\neq i}\int_{F_{ij}
}\frac{q_{i}-q_{j}}{\left\vert y_{i}-y_{j}\right\vert }f\left(  x\right)
d\mathcal{H}^{d-1}\left(  x\right)  .
\]
Writing for $i\neq j,$
\[
g_{ij}=\int_{F_{ij}}\frac{f\left(  x\right)  }{\left\vert y_{i}-y_{j}
\right\vert }d\mathcal{H}^{d-1}\left(  x\right)  .
\]
Since $G=\left(  g_{ij}\right)  $ is symmetric, we have
\[
\sum_{i}\left(  Gq\right)  _{i}=\sum_{i}\sum_{j\neq i}g_{ij}\left(
q_{i}-q_{j}\right)  =\sum_{i}\sum_{j\neq i}g_{ij}q_{i}-\sum_{j}\sum_{i\neq
j}g_{ij}q_{j}=0
\]
and
\[
q\cdot Gq=\sum_{i}\sum_{j\neq i}g_{ij}\left(  q_{i}^{2}-q_{i}q_{j}\right)
=\frac{1}{2}\sum_{i}\sum_{j\neq i}g_{ij}\left(  q_{i}-q_{j}\right)  ^{2}
=\sum_{i<j}g_{ij}\left(  q_{i}-q_{j}\right)  ^{2}.
\]
Note that $g_{ij}>0$ for all $i\neq j.$ This implies that any $q$ with $Gq=0$
must have identical coordinates$.$\ Thus $\ker\left(  G\right)
=\operatorname{span}\left\{  \left(  1,\ldots,1\right)  \right\}  .$ Again,
using that $G$ is symmetric, we obtain $\operatorname{ran}G=\left(  \ker
G\right)  ^{\perp}=\left\{  z:\sum_{i}z_{i}=0\right\}  .$
\end{proof}

An alernative proof of the weighted cell calculus Lemma
\ref{lem:weighted-calculus} may obtained by first restrciting the calculus to
a large ball $B\left(  0,R\right)  ,$ applying \cite[(3.5)]{openai2026sharp}
and then letting $R\rightarrow\infty.$ But the alternative proof is not much
simplier than the direct proof given here.

The next lemma constructs cells with prescribed masses by transportation.

\begin{lemma}
[Prescribed positive masses]\label{lem:weighted-masses}Assume that $\left(
\ref{eq:strong-source}\right)  $ holds. Fix $N\in\mathbb{N}$, $\left(
y_{1},\ldots,y_{N}\right)  \in\mathcal{Y}_{N}$ and positive masses
$m_{1},\ldots,m_{N}$ with $\sum_{i=1}^{N}m_{i}=1$. Then there is a unique
vector $c\in\mathbb{R}^{N}$ such that
\begin{equation}
\sum_{i}m_{i}c_{i}=0\text{ and }M_{i}(c,y)=m_{i},\text{ }i\in\left\{
1,...,N\right\}  . \label{eq:prescribed}
\end{equation}
Moreover, for these fixed masss, let
\[
H_{m}=\left\{  c\in\mathbb{R}^{N}:\sum_{i}m_{i}c_{i}=0\right\}  .
\]
Then the solution map $\mathcal{S}_{N}:\mathcal{Y}_{N}\longrightarrow H_{m}$
is $C^{1}$ on the open set $\mathcal{Y}_{N}$.
\end{lemma}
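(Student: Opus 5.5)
Existence comes from optimal transport, uniqueness from a Kantorovich-dual argument, and $C^1$ regularity from the implicit function theorem using Lemma \ref{lem:weighted-calculus}.

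\textbf{Existence.} Fix $y\in\mathcal{Y}_N$ and masses $m$, and set $\mu=\sum_i m_i\delta_{y_i}$. Since $\rho$ has finite second moment by (\ref{eq:Gaussian-envelope}), Brenier's theorem gives a convex potential $\varphi$ with $(\nabla\varphi)_\#\rho=\mu$. For a discrete target, the Kantorovich dual is attained by a finite-dimensional concave maximization,
\[
\Phi(c)=\sum_i m_i c_i-\int u_{c,y}\,d\rho \quad\text{over } c\in\mathbb{R}^N,
\]
with the sign convention of the semi-dual. I would argue directly on $\Phi$ to avoid citing the dual attainment.

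The integral is finite because $|u_{c,y}(x)|\le |x|\max|y_i|+\max|c_i|$. $\Phi$ is concave, and by Lemma \ref{lem:weighted-calculus} (or a direct a.e. gradient computation) it is differentiable with $\partial_{c_i}\Phi=m_i-M_i(c,y)$. The ties have Lebesgue measure zero, as in Step 2 of that lemma.

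$\Phi$ is invariant under $c\mapsto c+t(1,\dots,1)$, so restrict to $H_m$. I need $\Phi(c)\to-\infty$ as $|c|\to\infty$ in $H_m$. Let $c\in H_m$ have large norm, and let $i_0$ be an index with $c_{i_0}=\max_i c_i$. Then $\int u_{c,y}\,d\rho\ge c_{i_0}-\max|y_i|\int|x|\,d\rho$, hence
\[
\Phi(c)\le \sum_i m_i c_i - \max_i c_i + C = -\max_i c_i + C .
\]
On $H_m$ we have $\sum m_i c_i=0$ with all $m_i>0$, so $\max_i c_i\ge \delta|c|$ for some $\delta>0$ depending only on $m$; a nonzero vector with weighted mean zero has a positive entry comparable to its norm. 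Therefore $\Phi$ is coercive on $H_m$ and attains a maximum there. At this maximizer $\nabla\Phi\perp H_m$, and the gradient $m-M$ already sums to zero. Both conditions together force $m-M=\lambda(1,\dots,1)$ with $\sum_i(m_i-M_i)=0$, hence $\lambda=0$ and $M_i=m_i$. Since every $m_i>0$, we also get $(c,y)\in\Omega_N$.

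\textbf{Uniqueness.} Suppose $c,c'\in H_m$ both satisfy (\ref{eq:prescribed}). Then $\nabla u_{c,y}$ and $\nabla u_{c',y}$ both push $\rho$ to $\mu$. By Brenier uniqueness they agree $\rho$-a.e., and hence Lebesgue-a.e. because $f>0$ everywhere.

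Two convex functions on $\mathbb{R}^d$ with a.e.-equal gradients differ by a constant, so $u_{c,y}=u_{c',y}+t$. Each cell $P_i$ has positive mass, hence nonempty interior. On the interior of $P_i(c,y)$, which up to null sets is also $P_i(c',y)$, the identity $x\cdot y_i+c_i=x\cdot y_i+c'_i+t$ gives $c_i=c'_i+t$ for every $i$. The normalization $\sum m_ic_i=\sum m_ic'_i=0$ then forces $t=0$.

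\textbf{$C^1$ dependence on $y$.} Consider $F(c,y)=(M_i(c,y)-m_i)_{i=1}^N$ on $\Omega_N$; it is $C^1$ by Lemma \ref{lem:weighted-calculus}(i). Since $\sum_i M_i=1$, $F$ takes values in $Z=\{z:\sum z_i=0\}$. Define
\[
\tilde F:(H_m\times\mathcal{Y}_N)\cap\Omega_N\to Z .
\]
Its $c$-derivative is $G$ restricted to $H_m$. By Lemma \ref{lem:weighted-calculus}(iii), $\ker G=\operatorname{span}\{(1,\dots,1)\}$, which meets $H_m$ only at $0$ since $\sum m_i>0$. As $\dim H_m=\dim Z=N-1$ and $G(H_m)\subset Z$, the restriction $G|_{H_m}$ is an isomorphism onto $Z$.

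The implicit function theorem then gives a local $C^1$ solution $c(y)$ near each $y_0$, with values in the open set $\Omega_N$. By the uniqueness above, this local branch coincides with $\mathcal{S}_N(y)$, so $\mathcal{S}_N$ is $C^1$ on $\mathcal{Y}_N$.

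\textbf{Main obstacle.} I expect the delicate step to be the coercivity on $H_m$ together with the differentiability of $\Phi$ at points where some cells may be empty. To handle this I would verify directly that
\[
\partial_{c_i}\int u_{c,y}\,d\rho=M_i(c,y)
\]
for all $c$, not only on $\Omega_N$. This follows from the convexity of $c\mapsto u_{c,y}(x)$ and dominated convergence of difference quotients, since the ties form a null set.
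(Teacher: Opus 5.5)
Your argument is correct. The existence part and the regularity part coincide with Steps 1--4 of the paper's proof. For existence, you maximize the concave semi-dual $\Phi=-F$ on $H_m$, get coercivity from $\max_i c_i\geqslant\delta|c|$, and use the first-order condition. For regularity, you apply the implicit function theorem with $D_c=G|_{H_m}$, an isomorphism onto $\{z:\sum_i z_i=0\}$. The paper also carries out the direct difference-quotient computation of $\partial_{c_i}\int u_{c,y}\,d\rho=M_i(c,y)$ for all $c$, which is the step you flag as the main obstacle.

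One slip: $H_m^{\perp}=\operatorname{span}\{m\}$, not $\operatorname{span}\{(1,\dots,1)\}$. So stationarity on $H_m$ reads $m-M=\lambda m$. Summing still gives $\lambda=0$, so the conclusion is unaffected.

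The genuine difference is in uniqueness.

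\textbf{The paper's route.} It stays inside the variational problem. Solutions in $H_m$ are exactly the minimizers of the convex $F$ on $H_m$, which form a convex set. Two distinct solutions would therefore produce a segment of solutions accumulating at one of them. This contradicts the local uniqueness given by the implicit function theorem, i.e.\ the nondegeneracy of $G|_{H_m}$.

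\textbf{Your route.} You invoke Brenier's uniqueness of the gradient of a convex function pushing $\rho$ to $\mu$. From this you conclude that $u_{c,y}-u_{c',y}$ is a constant $t$, read off $c_i=c_i'+t$ on each cell of positive mass, and let the normalization force $t=0$.

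\textbf{Comparison.} Your argument is global and does not use part (iii) of the cell calculus lemma. Uniqueness is thus established before, and independently of, the implicit function theorem, and the local implicit-function branch is immediately identified with $\mathcal{S}_N$. The paper's route keeps this lemma free of optimal transport theory and derives uniqueness from the same nondegeneracy that gives regularity. The Brenier-type argument you use is the one the paper employs later, for uniqueness of the normalized potentials in the finite-target proposition.
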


\begin{proof}
\textbf{1}. Consider the function on $\mathbb{R}^{N},$
\[
F\left(  c\right)  =\int\max_{i}\left\{  x\cdot y_{i}+c_{i}\right\}
d\rho\left(  x\right)  -\sum_{i}m_{i}c_{i}.
\]
It is finite convex, thus is continuous on $\mathbb{R}^{N}.$ We intend to show
that its stationary equation at the global minimum gives exactly the desired
solution. The first step toward that goal is computating its derivative$.$ The
$\max$ in the integrand is taken over a finite number of linear functions. Let
$D_{i}=\operatorname*{int}P_{i}\left(  c,y\right)  .$ Define
\[
E=\mathbb{R}^{d}\backslash\bigcup_{i}D_{i}.
\]
Here $P_{i}\left(  c,y\right)  $ is defined as in $\left(  \ref{eq:Def-Pi}
\right)  .$ Clearly $E$ has zero Lebesgue measure. Write $\ell_{j}\left(
x\right)  =x\cdot y_{j}+c_{j}.$ For every $x$ outside $E,$ $\max_{i}\left\{
x\cdot y_{i}+c_{i}\right\}  $ is attained at a unique maximizing index
$i\left(  x\right)  $,
\[
\max_{i}\left\{  x\cdot y_{i}+c_{i}\right\}  =x\cdot y_{i\left(  x\right)
}+c_{i\left(  x\right)  }>\max_{j\neq i\left(  x\right)  }\left\{  x\cdot
y_{j}+c_{j}\right\}  .
\]
Equivalently
\[
\min_{j\neq i\left(  x\right)  }\left\{  \ell_{i\left(  x\right)  }\left(
x\right)  -\ell_{j}\left(  x\right)  \right\}  >0.
\]
Since $E^{c}$ is open, a small perturnation of the linear functions will not
change the value of $i\left(  x\right)  .$ Precisely, consider any directional
vector $q\in\mathbb{R}^{N}$. For any $j\neq i\left(  x\right)  ,$
\begin{align*}
x\cdot y_{i\left(  x\right)  }+c_{i\left(  x\right)  }+q_{i\left(  x\right)
}-\left(  x\cdot y_{j}+c_{j}+q_{j}\right)   &  =\left[  \ell_{i\left(
x\right)  }\left(  x\right)  -\ell_{j}\left(  x\right)  \right]  -2\left\Vert
q\right\Vert _{\infty}\\
&  \geqslant\min_{j\neq i\left(  x\right)  }\left\{  \ell_{i\left(  x\right)
}\left(  x\right)  -\ell_{j}\left(  x\right)  \right\}  -2\left\Vert
q\right\Vert _{\infty}>0.
\end{align*}
The last step of the inequality is valid provided $\left\Vert q\right\Vert
_{\infty}$ is small. Hence for any sufficiently small $t>0,$
\[
\frac{\max_{i}\left\{  x\cdot y_{i}+c_{i}+tq_{i}\right\}  -\max_{i}\left\{
x\cdot y_{i}+c_{i}\right\}  }{t}=\frac{x\cdot y_{i\left(  x\right)
}+c_{i\left(  x\right)  }+tq_{i\left(  x\right)  }-\left[  x\cdot y_{i\left(
x\right)  }+c_{i\left(  x\right)  }\right]  }{t}=q_{i\left(  x\right)  }.
\]
The difference quotient also has an integrable control, because
\[
\left\vert \max_{i}\left\{  x\cdot y_{i}+c_{i}+tq_{i}\right\}  -\max
_{i}\left\{  x\cdot y_{i}+c_{i}\right\}  \right\vert \leqslant t\left\Vert
q\right\Vert _{\infty}.
\]
Therefore by the dominated convergence theorem we have
\begin{align*}
DF\left(  c\right)  \left[  q\right]   &  =\lim_{t\rightarrow0}\frac{F\left(
c+tq\right)  -F\left(  c\right)  }{t}=\int q_{i\left(  x\right)  }d\rho
-\sum_{i}m_{i}q_{i}\\
&  =\sum_{i}\int_{P_{i}}q_{i}d\rho-\sum_{i}m_{i}q_{i}=\sum_{i}\left[
M_{i}\left(  c,y\right)  -m_{i}\right]  q_{i}.
\end{align*}
The integrations may be regarded as being carried out on $\mathbb{R}
^{N}\backslash E$ because $E$ has zero $\rho$-mass. Thus the splitting of the
integral into sums of sub-domain integrals are justified. For each $x\in
P_{i}\backslash E,$ the maximizing index $i\left(  x\right)  $ exactly equal
$i$. So this computation is valid and gives the directional derivative of
$F\left(  \cdot\right)  $. More can be said. $F\left(  \cdot\right)  $ is
actually differentiable. Indeed, for any $q\in\mathbb{R}^{N},$
\[
F\left(  c+q\right)  -F\left(  c\right)  -\sum_{i}\left[  M_{i}\left(
c,y\right)  -m_{i}\right]  q_{i}=\int\left[  \max_{j}\left\{  x\cdot
y_{j}+c_{j}+q_{j}\right\}  -\max_{j}\left\{  x\cdot y_{j}+c_{j}\right\}
-q_{i\left(  x\right)  }\right]  d\rho.
\]
The integrad
\[
r\left(  x\right)  =\max_{j}\left\{  x\cdot y_{j}+c_{j}+q_{j}\right\}
-\max_{j}\left\{  x\cdot y_{j}+c_{j}\right\}  -q_{i\left(  x\right)  }
\]
has $\left\vert r\left(  x\right)  \right\vert \leqslant2\left\Vert
q\right\Vert _{\infty}$. Also observe that the previous analysis implies that
for each $x\in E^{c},$ $r\left(  x\right)  =0$ as soon as$\left\Vert
q\right\Vert _{\infty}$ is small enough. Therefore the pointwise limit holds
\[
\lim_{\left\Vert q\right\Vert _{\infty}\rightarrow0}\frac{\left\vert r\left(
x\right)  \right\vert }{\left\Vert q\right\Vert _{\infty}}=0,\text{ }\forall
x\in E^{c}.
\]
It follows that
\[
\left\vert \frac{F\left(  c+q\right)  -F\left(  c\right)  -\sum_{i}\left[
M_{i}\left(  c,y\right)  -m_{i}\right]  q_{i}}{\left\Vert q\right\Vert
_{\infty}}\right\vert \leqslant\int\frac{\left\vert r\left(  x\right)
\right\vert }{\left\Vert q\right\Vert _{\infty}}d\rho\rightarrow0\text{ as
}\left\Vert q\right\Vert _{\infty}\rightarrow0,
\]
showing that $F\left(  \cdot\right)  $ is differentiable at $c\in
\mathbb{R}^{N}.$

\textbf{2}. Now consider the restriction of $F\left(  \cdot\right)  $ to
$H_{m}.$ For any $c\in H_{m},$ let $c_{\max}=\max_{i}c_{i}$. The constraint
implies $c_{\max}\geqslant0$, so
\[
F\left(  c\right)  \geqslant c_{\max}-\max_{i}\left\vert y_{i}\right\vert
\int\left\vert x\right\vert d\rho.
\]
Since
\[
c_{\max}\geqslant c_{i}=-\frac{1}{m_{i}}\sum_{j\neq i}m_{j}c_{j}
\geqslant-\frac{1-m_{i}}{m_{i}}c_{\max}\geqslant-\frac{1}{m_{i}}c_{\max}
\]
and $\min_{i}m_{i}>0,$ we have $\left\Vert c\right\Vert _{\infty}\leqslant
c_{\max}/\left(  \min_{i}m_{i}\right)  <\infty.$ It follows that
\[
F\left(  c\right)  \geqslant\left(  \min_{i}m_{i}\right)  \left\Vert
c\right\Vert _{\infty}-\max_{i}\left\vert y_{i}\right\vert \int\left\vert
x\right\vert d\rho.
\]
This indicates that $F\left(  \cdot\right)  $ is coercive on the linear space
$H_{m}$ equipped with this equivalent nom $\left\Vert \cdot\right\Vert
_{\infty}$ of $\mathbb{R}^{N},$ thus attains its minimum there.

\textbf{3}. Let $c_{\ast}\in H_{m}$ be a \ minimum of $F\left(  \cdot\right)
$ on $H_{m},$ we claim that it is also a global minimum of $F\left(
\cdot\right)  $ on $\mathbb{R}^{N}.$ Indeed, at $c_{\ast}$ we have $DF\left(
c_{\ast}\right)  \left[  q\right]  =0$ for all perturbation vector $q$
satisfying $c_{\ast}+q\in H_{m}.$ Since $H_{m}$ is a linear space, the
requirement means $q$ can be any vector in $H_{m}.$ Thus
\begin{equation}
\sum_{i}\left[  M_{i}\left(  c_{\ast},y\right)  -m_{i}\right]  q_{i}=0\text{
for all }q\in H_{m}. \label{eq:Hm-stationary-point}
\end{equation}
Hence, writing $M=\left(  M_{i}\left(  c,y\right)  \right)  ,$ $m=\left(
m_{i}\right)  ,$ we have, for some $\lambda\in\mathbb{R}$,
\[
M-m=\lambda m.
\]
Then
\[
0=\sum_{i}(M_{i}-m_{i})=\lambda\sum_{i}m_{i}=\lambda.
\]
Hence $M_{i}=m_{i}$ for all $i.$ Consequently, $\left(
\ref{eq:Hm-stationary-point}\right)  $ holds for all $q\in\mathbb{R}^{N},$ not
just those in $H_{m}.$ This indicates that $c_{\ast}$ is global stationary
point of $F\left(  \cdot\right)  .$ Together with the convexity, this shows
that $c_{\ast}$ is a global minimum of $F\left(  \cdot\right)  $ on
$\mathbb{R}^{N}.$

\textbf{4}. We have prove that for any $y_{0}\in\mathcal{Y}_{N},$ there is a
$c_{0}\in\mathbb{R}^{N}$ such that is a global minimum of the associated
functional $F\left(  \cdot\right)  ,$ and $c_{0}$ satisfies the constraint
$\left(  \ref{eq:prescribed}\right)  $. So the mapping $\left(  c,y\right)
\mapsto M\left(  c,y\right)  -m$ on $\mathbb{R}^{N}\times\mathcal{Y}_{N}$
vanishes at $\left(  c_{0},y_{0}\right)  .$ Since $m=\left(  m_{i}\right)
>0$, thus $\left(  c_{0},y_{0}\right)  \in\left(  H_{m}\times\mathcal{Y}
_{N}\right)  \subset\Omega_{N}.$ Lemma \ref{lem:weighted-calculus} shows that
this mapping is $C^{1}$ in an open neighbourhood of $\left(  c_{0}
,y_{0}\right)  .$ Denote by $\Phi$ the restriction of the preceding mapping to
$H_{m}\times\mathcal{Y}_{N},$
\[
\Phi\left(  c,y\right)  =M\left(  c,y\right)  -m.
\]
It is also $C^{1}$ and $D_{c}\Phi\left(  c_{0},y_{0}\right)  =G|_{H_{m}}.$ If
$z\in H_{m}$ and $Gz=0,$ then the third part of Lemma
\ref{lem:weighted-calculus} gives $z=t\left(  1,...,1\right)  $ for some
$t\in\mathbb{R}$ and
\[
0=\sum_{i}m_{i}z_{i}=t\sum_{i}m_{i}=t.
\]
Hence $z=0.$ So $G|_{H_{m}},$ which has kernel $\left\{  0\right\}  $, is a
one-one map from $H_{m}$ to $\left\{  z\in\mathbb{R}^{N}:\sum_{i}
z_{i}=0\right\}  $. Thus $G|_{H_{m}}$ is a $\left(  N-1\right)  $ by $\left(
N-1\right)  $ invertible matrix. The implicit function theorem therefore gives
a neighbourhood about $y_{0}$ in which $c$ is uniquely determined as a $C^{1}$
function of $y.$

\textbf{5}. If $c_{0}$ and $c_{0}^{\prime}$ are two distinct solutions in
$H_{m}$ corresponding to the same $y_{0}$, then the associated convex
functional $F$ constructed in step 1 satisfies
\[
F((1-t)c_{0}+tc_{0}^{\prime})=\min_{H_{m}}F\text{ for all }t\in\left[
0,1\right]  .
\]
As $t\rightarrow0,$ this yields two distinct solutions in the neighbourhood of
$\left(  c_{0},y_{0}\right)  ,$ contradicting local uniqueness at $c_{0}$.
Thus the solution is unique.
\end{proof}

Lemma \ref{lem:weighted-masses} is trivial when $N=1$. Since in this case
$H_{m}=\{0\}$, $P_{1}=\mathbb{R}^{d}$, $M_{1}=1$ and the solution
corresponding to any $y$ is simply $c_{1}=0$.

\section{Cell derivative bounds}

The cell calculus alone does not give uniform control of cell barycenters. To
obtain the uniform control, we compare cells at opposite intercept
perturbations while leaving their sites fixed. The next result shows that the
averaging is integrable.

\begin{lemma}
\label{lem:weighted-midpoint}Assume that $\left(  \ref{eq:strong-source}
\right)  $ holds. Let $U_{-},$ $U_{+}\subset\mathbb{R}^{d}$ be closed convex
sets, and assume that $\alpha=\rho\left(  U_{-}\right)  >0$ and $\beta
=\rho\left(  U_{+}\right)  >0$. There is a coupling $\left(  X,Z\right)  $ of
$\rho|_{U_{-}}/\alpha$ and $\rho|_{U_{+}}/\beta$ such that, for every
nonnegative Borel function $g:\mathbb{R}^{d}\mapsto\lbrack0,\infty]$,
\begin{equation}
\sqrt{\alpha\beta}\mathbb{E}\left[  g\left(  \frac{X+Z}{2}\right)  \exp\left(
\frac{\kappa\left\vert X-Z\right\vert ^{2}}{8}\right)  \right]  \leqslant\int
g\left(  x\right)  f\left(  x\right)  dx. \label{eq:weighted-midpoint}
\end{equation}
If $\left(  X+Z\right)  /2$ lies almost surely in a Borel set $P\subset
\mathbb{R}^{d}$, the right side can be replaced by $\int_{P}g\left(  x\right)
f\left(  x\right)  dx$.
\end{lemma}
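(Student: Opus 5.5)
The plan is to take the coupling induced by the quadratic Brenier map and to prove \eqref{eq:weighted-midpoint} by a Prékopa--Leindler-type change of variables along the displacement interpolation at time $1/2$, in the spirit of McCann's displacement convexity. Set $\rho_-=f1_{U_-}/\alpha$ and $\rho_+=f1_{U_+}/\beta$. Both are absolutely continuous with Gaussian tails by \eqref{eq:Gaussian-envelope}, so they lie in $\mathcal{P}_2$. Brenier's theorem gives a convex $\varphi$ with $T=\nabla\varphi$ and $T_\#\rho_-=\rho_+$. Take $X\sim\rho_-$ and $Z=T(X)$, and consider the midpoint map $S=\nabla\left(\tfrac12|x|^2+\tfrac12\varphi\right)/1$ rescaled so that $S=(\mathrm{id}+T)/2$. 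Then $S$ is the gradient of the strictly convex function $\tfrac14|x|^2+\tfrac12\varphi$. Hence $S$ is injective off a $\rho_-$-null set: if $S(x)=S(x')$, monotonicity of $\nabla\varphi$ forces $x=x'$.

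\textbf{Step 1: pointwise inequalities.}
\begin{enumerate}
\item[(a)] Strong convexity \eqref{eq:strong-source} applied along the segment gives $V\left(\frac{x+z}{2}\right)\leqslant \frac{V(x)+V(z)}{2}-\frac{\kappa}{8}|x-z|^2$. Equivalently,
\[
f\left(\tfrac{x+z}{2}\right)\geqslant \sqrt{f(x)f(z)}\,e^{\kappa|x-z|^2/8}.
\]
\item[(b)] Let $DT$ be the Aleksandrov Hessian of $\varphi$, which is symmetric and nonnegative. By McCann's Monge--Ampère equation, for $\rho_-$-a.e. $x$ we have $\rho_-(x)=\rho_+(T x)\det DT(x)$, with $\det DT(x)>0$.
\item[(c)] Since $DS=(I+DT)/2$, applying $\frac{1+\lambda}{2}\geqslant\sqrt{\lambda}$ to each eigenvalue gives $\det DS\geqslant\sqrt{\det DT}$.
\end{enumerate}

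\textbf{Step 2: change of variables.} Write
\[
\mathbb{E}\left[g(S X)e^{\kappa|X-TX|^2/8}\right]=\int g(Sx)\,e^{\kappa|x-Tx|^2/8}\rho_-(x)\,dx .
\]
McCann's change-of-variables lemma for gradients of convex functions uses the Aleksandrov Jacobian and a.e. injectivity, and it applies because $S$ is the gradient of a convex function. It gives, for nonnegative $h$,
\[
\int g(Sx)h(x)\,dx\leqslant\int g(w)\,\frac{h}{\det DS}\left(S^{-1}w\right)dw .
\]
This is an inequality, not an equality, because the singular part of $D^2$ only helps. Apply it with $h=e^{\kappa|x-Tx|^2/8}\rho_-$ and bound the resulting integrand using (c), (b) and (a), in that order:
\[
\frac{e^{\kappa|x-Tx|^2/8}\rho_-(x)}{\det DS}\leqslant e^{\kappa|x-Tx|^2/8}\sqrt{\rho_-(x)\rho_+(Tx)}=\frac{e^{\kappa|x-Tx|^2/8}\sqrt{f(x)f(Tx)}}{\sqrt{\alpha\beta}}\leqslant\frac{f(Sx)}{\sqrt{\alpha\beta}} .
\]
Here $x\in U_-$ and $Tx\in U_+$ almost surely, so the indicators equal $1$ there. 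Multiplying through by $\sqrt{\alpha\beta}$ yields \eqref{eq:weighted-midpoint}.

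For the refinement, suppose $(X+Z)/2\in P$ almost surely. Then replace $g$ by $g1_P$. The left-hand side is unchanged and the right-hand side becomes $\int_P gf$.

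\textbf{Main obstacle.} The expected difficulty is the regularity in Steps 1(b) and 2. The Monge--Ampère identity and the area-formula inequality must hold with the Aleksandrov Jacobian, for sources supported on possibly unbounded convex sets and with merely Borel $g$. I would invoke McCann's 1997 results directly, namely the Aleksandrov a.e. second differentiability of $\varphi$ and his Jacobian equation. If these are unavailable in the required generality, I would first truncate to $U_\pm\cap \bar B(0,R)$, which are compact convex sets, and take $g$ bounded and continuous. The inequality would then pass to the limit $R\to\infty$ by stability of optimal plans, monotone convergence, and a monotone class argument extending from continuous to Borel $g$.
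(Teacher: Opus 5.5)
Your proof is correct, but it follows a different route from the paper.

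\textbf{The two couplings.} You couple $\rho|_{U_-}/\alpha$ and $\rho|_{U_+}/\beta$ by the Brenier map and use McCann's displacement interpolation at time $1/2$. The paper instead parametrizes both measures from the uniform law on $(0,1)^d$ by Knothe--Rosenblatt triangular maps $X(s),Z(s)$, and uses the midpoint map $H(s)=(X(s)+Z(s))/2$. The skeleton is the same in both. AM--GM bounds the midpoint Jacobian below by the geometric mean of the two Jacobians. Strong convexity gives $f(\tfrac{x+z}{2})\geqslant\sqrt{f(x)f(z)}\,e^{\kappa|x-z|^2/8}$. One then changes variables through an injective midpoint map.

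\textbf{What the paper's route buys.} AM--GM acts on the diagonal entries of triangular Jacobians, so no symmetry is needed. Injectivity comes from coordinatewise monotonicity. The Jacobian identity $\prod_k\partial_{s_k}X_k=\alpha/f(X)$ is elementary. The triangular structure even lets one carry out the change of variables as iterated one-dimensional monotone substitutions, without the $C^1$ regularity the paper invokes.

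\textbf{What your route buys.} AM--GM acts on the eigenvalues of $D^2_A\varphi$, which works because $I$ and the symmetric Aleksandrov Hessian commute. Injectivity is free and holds globally, by strong monotonicity of $\partial(\tfrac14|x|^2+\tfrac12\varphi)$. The price is Aleksandrov's theorem, McCann's a.e.\ Monge--Amp\`ere equation, and the area formula for gradients of convex functions. These hold for arbitrary absolutely continuous measures, with no compactness or moment hypotheses, so your truncation fallback is unnecessary.

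\textbf{Two small points.} You should note that $\varphi$ may equal $+\infty$ outside $U_-$ when $U_+$ is unbounded. This is harmless, since $\rho|_{U_-}$ is concentrated on $\operatorname{int}U_-$, where Aleksandrov's theorem applies. Also, your first formula for $S$ is garbled; the correct statement, which you give immediately after, is $S=\nabla(\tfrac14|x|^2+\tfrac12\varphi)$.
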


\begin{proof}
We employ the Knothe-Rosenblatt transform to construct the correspondence
between $\rho|_{U_{-}}$ (resp. $\rho|_{U_{+}}$) and the uniform distribution
on the $d$-dimensional unit cube $\left(  0,1\right)  ^{d}.$ First consider
the case $\rho=f\left(  x\right)  dx$ in the whole space $\mathbb{R}^{d}.$ By
the formula probability multiplication
\[
f\left(  x_{1},...,x_{d}\right)  dx_{1}\cdots dx_{d}=\left[  \prod_{k=2}
^{d}f\left(  x_{k}|x_{1},...,x_{k-1}\right)  f\left(  x_{1}\right)  \right]
dx_{1}\cdots dx_{d}
\]
where
\[
f\left(  x_{k}|x_{1},...,x_{k-1}\right)  =\frac{f\left(  x_{1},...,x_{k-1}
,x_{k}\right)  }{f\left(  x_{1},...,x_{k-1}\right)  }=\frac{\int f\left(
x_{1},...,x_{d}\right)  dx_{k+1}\cdots dx_{d}}{\int f\left(  x_{1}
,...,x_{k-1}\right)  dx_{k}\cdots dx_{d}}.
\]
Define the map $T=\left(  T_{1},...,T_{d}\right)  :\mathbb{R}^{d}
\mapsto\left(  0,1\right)  ^{d},$
\[
\begin{array}
[c]{ll}
s_{1}=T_{1}\left(  x_{1}\right)  & =\int_{-\infty}^{x_{1}}f\left(
x_{1}^{\prime}\right)  dx_{1}^{\prime},\\
\cdots & \cdots\\
s_{d}=T_{d}\left(  x_{1},...,x_{d}\right)  & =\int_{-\infty}^{x_{d}}f\left(
x_{d}^{\prime}|x_{1},...,x_{d-1}\right)  dx_{d}^{\prime},
\end{array}
\]
Its Jacobian equals the original density,
\[
J_{T}=\det\left\vert
\begin{array}
[c]{cccc}
f\left(  x_{1}\right)  & 0 & \cdots & 0\\
\ast & f\left(  x_{2}|x_{1}\right)  &  & \vdots\\
\vdots &  & \ddots & 0\\
\ast & \cdots & \ast & f\left(  x_{d}|x_{1},...,x_{d-1}\right)
\end{array}
\right\vert =f\left(  x_{1},...,x_{d}\right)  >0.
\]
Therefore $T$ sends $\rho$ to the uniform distribution on the unit cube. To
work with $\rho|_{U_{-}},$ we only need to replace $f$ in the above with
$p\left(  x_{1},...,x_{d}\right)  =f1_{\operatorname{int}U_{-}}/\alpha$ and
define
\[
X=\left(  X_{1},...X_{d}\right)  =T^{-1}\left(  s\right)  \text{ where
}s=\left(  s_{1},...,s_{d}\right)  .
\]
The use of $f1_{\operatorname{int}U_{-}}$ rather than $f1_{U_{-}}$ is
justified$,$ since the boundary $\partial U_{-}$ of the convex $U_{-}$\ is
negligible thus has zero $\rho$-mass. Working with $\operatorname{int}U_{-}$
also ensures that each conditional integral is carried out over open interval
and $T\in C^{1}$ thus allows us to invoke the inverse function theorem to
obtain the local inverse of $T.$ The triangular structure of $T$ makes it easy
to write down the inverse: starting with the first equation, we have
$x_{1}=T_{1}^{-1}\left(  s_{1}\right)  ,$ then substitute it into the second
equation
\[
s_{2}=T_{2}\left(  x_{1},x_{2}\right)  =\int_{-\infty}^{x_{2}}f\left(
x_{2}^{\prime}|x_{1}\right)  dx_{2}^{\prime}=\int_{-\infty}^{x_{2}}f\left(
x_{2}^{\prime}|T_{1}^{-1}\left(  s_{1}\right)  \right)  dx_{2}^{\prime}
\]
to solve for $x_{2},$ then substitute the first two solutions into the third
equation and so on. The result is that each $X_{k}$ is only a function of
$\left(  s_{1},...,s_{k}\right)  .$ Since $J_{T}$ is triangular, the main
diagonal elements of its inverse are the exact reciprocals of the main
diagonal elements of the original matrix, thus
\[
\partial_{s_{k}}X_{k}=\frac{p\left(  x_{1},...,x_{k-1}\right)  }{p\left(
x_{1},...,x_{k}\right)  }\text{ and }\prod_{k}\partial_{s_{k}}X_{k}
=\frac{\alpha}{f\left(  X\right)  }.
\]
We now construct $Z$ from $U_{+}$ using the same transform but with density
$p\left(  x_{1},...,x_{d}\right)  =f1_{\operatorname{int}U_{+}}/\beta$. Let
\[
H\left(  s\right)  =\frac{X\left(  s\right)  +Z\left(  s\right)  }{2},\text{
}s=\left(  s_{1},...,s_{d}\right)  \in(0,1)^{d}.
\]
Then
\[
J_{H}=\prod_{k}\partial_{s_{k}}H_{k}=\prod_{k}\frac{\partial_{s_{k}}
X_{k}+\partial_{s_{k}}Z_{k}}{2}\geqslant\prod_{k}\sqrt{\partial_{s_{k}}
X_{k}\partial_{s_{k}}Z_{k}}=\frac{\sqrt{\alpha\beta}}{\sqrt{f\left(  X\right)
f\left(  Z\right)  }}.
\]
Using the convexity of $V(x)-\kappa\left\vert x\right\vert ^{2}/2$ and the
parallelogram law that
\[
\frac{1}{2}\left(  \left\vert x\right\vert ^{2}+\left\vert z\right\vert
^{2}\right)  -\frac{1}{4}\left\vert x+z\right\vert ^{2}=\frac{1}{4}\left\vert
x-z\right\vert ^{2},
\]
we have
\[
V\left(  \frac{x+z}{2}\right)  \leqslant\frac{1}{2}\left(  \left[  V\left(
x\right)  -\frac{\kappa}{2}\left\vert x\right\vert ^{2}+V\left(  z\right)
-\frac{\kappa}{2}\left\vert z\right\vert ^{2}\right]  \right)  +\frac{\kappa
}{2}\left\vert \frac{x+z}{2}\right\vert ^{2}\leqslant\frac{V\left(  x\right)
+V\left(  z\right)  }{2}-\frac{\kappa}{8}\left\vert x-z\right\vert ^{2}.
\]
So
\[
f\left(  \frac{x+z}{2}\right)  \geqslant\sqrt{f\left(  x\right)  f\left(
z\right)  }e^{\kappa\left\vert x-z\right\vert ^{2}/8}
\]
Combining these inequalities we obtain
\begin{equation}
f\left(  H\left(  s\right)  \right)  J\left(  s\right)  \geqslant\sqrt
{\alpha\beta}e^{\kappa\left\vert X\left(  s\right)  -Z\left(  s\right)
\right\vert ^{2}/8}. \label{eq:curved-J}
\end{equation}
It follows that for bounded nonnegative Borel function $g$,
\begin{align*}
&  \sqrt{\alpha\beta}\int_{(0,1)^{d}}g\left(  H\left(  s\right)  \right)
e^{\kappa\left\vert X\left(  s\right)  -Z\left(  s\right)  \right\vert ^{2}
/8}ds\\
&  \leqslant\int_{(0,1)^{d}}g\left(  H\left(  s\right)  \right)  g\left(
H\left(  s\right)  \right)  J\left(  s\right)  ds=\int_{H\left(
(0,1)^{d}\right)  }g\left(  x\right)  g\left(  x\right)  dx\leqslant
\int_{\mathbb{R}^{d}}g(x)f(x)dx.
\end{align*}
For general nonnegative Borel $g,$ we apply this to $\min\left(  g,n\right)  $
and let $n\rightarrow\infty$. If $\left(  X+Z\right)  /2$ lies almost surely
in a Borel set $P$, then $H\left(  (0,1)^{d}\right)  \subset P$ almost surely
and we can replace the last term in the inequality with $\int_{P}g(x)f(x)dx.$
The proof is thus completed.
\end{proof}

The follow theorem turns the geometric comparison into a differentiable bound.

\begin{theorem}
[barycenter derivative bound]\label{thm:barycenter}Assume $\left(
\ref{eq:strong-source}\right)  $ holds. Fix $N\in\mathbb{N}$, and let $\left(
c,y\right)  \in\Omega_{N}$. Write $M_{i}=M_{i}\left(  c,y\right)  $ as in
$\left(  \ref{eq:short-Pi-Mi-Bi}\right)  .$ For every $q\in\mathbb{R}^{N}$,
the directional derivatives at $(c,y)$ satisfy
\begin{equation}
\sum_{i}M_{i}\left\vert Db_{i}\left[  q,0\right]  \right\vert ^{2}
\leqslant\frac{1}{\kappa}\sum_{i}\frac{\left\vert DM_{i}\left[  q,0\right]
\right\vert ^{2}}{M_{i}}. \label{eq:barycenter-bound}
\end{equation}
Importantly, the constant is independent of the sites configuration, and the
number or masses of the cells.
\end{theorem}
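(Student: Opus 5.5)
The plan is to apply the weighted midpoint coupling (Lemma~\ref{lem:weighted-midpoint}) to each cell at the two opposite intercept perturbations $c\pm\epsilon q$, with the sites $y$ held fixed. Two ingredients make this work. First, the defining constraints of a cell are affine in $c$. Second, the cell masses always sum to one, and this will absorb all second-order terms. The argument only uses the $C^{1}$ regularity of Lemma~\ref{lem:weighted-calculus}.

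\textbf{Step 1 (setup).} Since $\Omega_N$ is open (Lemma~\ref{lem:weighted-calculus}(i)), for small $\epsilon>0$ the points $(c\pm\epsilon q,y)$ lie in $\Omega_N$. Set
\[
U_i^{\pm}=P_i(c\pm\epsilon q,y),\qquad M_i^{\pm}=M_i(c\pm\epsilon q,y)>0,\qquad b_i^{\pm}=b_i(c\pm\epsilon q,y).
\]
Each $U_i^\pm$ is a closed convex polyhedron. If $X\in U_i^-$ and $Z\in U_i^+$, then averaging the inequalities $x\cdot(y_i-y_j)+(c_i-c_j)\mp\epsilon(q_i-q_j)\geqslant 0$ shows that $(X+Z)/2\in P_i(c,y)$. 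Lemma~\ref{lem:weighted-midpoint} with $g\equiv1$ and $P=P_i$ then gives
\[
\sqrt{M_i^+M_i^-}\;\mathbb{E}\exp\Bigl(\tfrac{\kappa}{8}|X-Z|^2\Bigr)\leqslant M_i .
\]
By $e^{s}\geqslant 1+s$ and Jensen, $\mathbb{E}|X-Z|^2\geqslant|\mathbb{E}X-\mathbb{E}Z|^2=|b_i^--b_i^+|^2$. Hence
\[
\sqrt{M_i^+M_i^-}\Bigl(1+\tfrac{\kappa}{8}|b_i^+-b_i^-|^2\Bigr)\leqslant M_i .
\]

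\textbf{Step 2 (summing over cells).} Sum over $i$ and use $\sum_iM_i=\sum_iM_i^{\pm}=1$ together with the identity
\[
\sqrt{ab}=\tfrac{a+b}{2}-\tfrac12(\sqrt a-\sqrt b)^2 .
\]
Then $\sum_i\sqrt{M_i^+M_i^-}=1-\tfrac12\sum_i(\sqrt{M_i^+}-\sqrt{M_i^-})^2$. The constant terms cancel, leaving the exact inequality
\[
\frac{\kappa}{8}\sum_i\sqrt{M_i^+M_i^-}\,|b_i^+-b_i^-|^2\leqslant\frac12\sum_i\bigl(\sqrt{M_i^+}-\sqrt{M_i^-}\bigr)^2 .
\]
This summation step is the crux. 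For a single cell one would have to expand $\sqrt{M_i^+M_i^-}$ to second order, and the term $D^2M_i[q,q]$ would not cancel; only the global constraint $\sum_i M_i=1$ removes it, without any second derivative. This is also why the constant does not depend on $N$, the sites, or the masses.

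\textbf{Step 3 (limit).} Divide by $\epsilon^2$ and let $\epsilon\to0$. By Lemma~\ref{lem:weighted-calculus}(i), $M_i,B_i\in C^1$, so $b_i=B_i/M_i\in C^1$ near $(c,y)$ because $M_i>0$ there. Consequently
\[
\frac{b_i^+-b_i^-}{\epsilon}\to 2Db_i[q,0],\qquad \frac{\sqrt{M_i^+}-\sqrt{M_i^-}}{\epsilon}\to\frac{DM_i[q,0]}{\sqrt{M_i}},\qquad \sqrt{M_i^+M_i^-}\to M_i .
\]
Passing to the limit gives
\[
\frac{\kappa}{2}\sum_iM_i|Db_i[q,0]|^2\leqslant\frac12\sum_i\frac{|DM_i[q,0]|^2}{M_i},
\]
which is (\ref{eq:barycenter-bound}). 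The only point to check carefully is that Lemma~\ref{lem:weighted-midpoint} applies as used: the sets $U_i^\pm$ are closed and convex with positive mass, and the midpoint lies in $P_i$ for every coupled pair, hence almost surely.
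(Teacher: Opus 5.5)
Your proposal is correct and is essentially the paper's own proof. You use the same ingredients in the same order: symmetric intercept perturbations $c\pm\epsilon q$ with the sites fixed, the midpoint inclusion $(P_i^-+P_i^+)/2\subset P_i(c,y)$, Lemma~\ref{lem:weighted-midpoint} with $g\equiv1$, Jensen together with $e^s\geqslant 1+s$, summation over cells using $\sum_i M_i^{\pm}=1$ to get $1-\sum_i\sqrt{M_i^+M_i^-}=\tfrac12\sum_i(\sqrt{M_i^+}-\sqrt{M_i^-})^2$, and then division by $\epsilon^2$ with the $C^1$ limit. The paper writes that last quantity in the equivalent form $\sum_i (m_i^--m_i^+)^2/\bigl(2(\sqrt{m_i^-}+\sqrt{m_i^+})^2\bigr)$, and its (undefined) weights $\gamma_i$ are exactly your $\sqrt{M_i^+M_i^-}$.
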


\begin{proof}
The directional derivates only ask for perturbation of $c$, so $y$ is fixed in
the following. For sufficiently small $t>0$, let $\left\{  P_{i}^{-}\right\}
$, $\left\{  P_{i}^{+}\right\}  $ be cells corresponding to $\left(
c-tq,y\right)  ,$ $\left(  c+tq,y\right)  $ as defined by $\left(
\ref{eq:Def-Pi}\right)  .$ Write
\[
m_{i}^{-}=M_{i}\left(  c-tq,y\right)  ,\text{ }m_{i}^{+}=M_{i}\left(
c+tq,y\right)  ,
\]
and for the barycenters (recall $\left(  \ref{eq:def-Bi}\right)  $), write
\[
b_{i}^{-}=b_{i}\left(  c-tq,y\right)  ,\text{ }b_{i}^{+}=b_{i}\left(
c+tq,y\right)  .
\]
This symmetric perturbation keeps the midpoints staying in their original
cells. Indeed, for $x\in P_{i}^{-}$, $z\in P_{i}^{+}$ and $j\in\left\{
1,...,N\right\}  ,$ $j\neq i$, from the cell defintion we have
\[
\left(  y_{i}-y_{j}\right)  \cdot x+\left(  c_{i}-tq_{i}\right)  -\left(
c_{j}-tq_{j}\right)  \geqslant0,
\]
\[
\left(  y_{i}-y_{j}\right)  \cdot z+\left(  c_{i}+tq_{i}\right)  -\left(
c_{j}+tq_{j}\right)  \geqslant0.
\]
Averaging the inequalities yields
\[
\left(  y_{i}-y_{j}\right)  \cdot\frac{x+z}{2}+c_{i}-c_{j}\geqslant0,
\]
which means $\left(  P_{i}^{-}+P_{i}^{+}\right)  /2\subset P_{i}\left(
c,y\right)  $. We are now in a position to invoke
Lemma~\ref{lem:weighted-midpoint} with $g=1$, $P=P_{i}\left(  c,y\right)  $ to
obtain
\[
\sqrt{m_{i}^{-}m_{i}^{+}}\mathbb{E}e^{\kappa\left\vert X_{i}-Z_{i}\right\vert
^{2}/8}\leqslant M_{i},
\]
where $X_{i}\sim f1_{\operatorname{int}P_{i}^{-}}/m_{i}^{-},$ $Z_{i}\sim
f1_{\operatorname{int}P_{i}^{+}}/m_{i}^{+}$. Note that $\mathbb{E}X_{i}
=b_{i}^{-},$ $\mathbb{E}Z_{i}=b_{i}^{+}.$ By Jensen inequality and that
$e^{s}\geqslant1+s$ for all $s$, we have
\begin{equation}
\sum_{i}\gamma_{i}\left\vert b_{i}^{-}-b_{i}^{+}\right\vert ^{2}\leqslant
\sum_{i}\gamma_{i}\mathbb{E}\left\vert X_{i}-Z_{i}\right\vert ^{2}
\leqslant\frac{8}{\kappa}\left(  1-\sum_{i}\sqrt{m_{i}^{-}m_{i}^{+}}\right)
.\nonumber
\end{equation}
Note that
\begin{equation}
1-\sum_{i}\sqrt{m_{i}^{-}m_{i}^{+}}=\frac{1}{2}\sum_{i}(m_{i}^{-}+m_{i}
^{+}-2\sqrt{m_{i}^{-}m_{i}^{+}})=\sum_{i}\frac{(m_{i}^{-}-m_{i}^{+})^{2}
}{2(\sqrt{m_{i}^{-}}+\sqrt{m_{i}^{+}})^{2}}.\nonumber
\end{equation}
Combining the preceding equations yields, for $t>0,$
\begin{equation}
\sum_{i}\gamma_{i}\left\vert \frac{b_{i}^{-}-b_{i}^{+}}{2t}\right\vert
^{2}\leqslant\frac{8}{\kappa}\sum_{i}\frac{\left[  (m_{i}^{-}-m_{i}
^{+})/\left(  2t\right)  \right]  ^{2}}{2(\sqrt{m_{i}^{-}}+\sqrt{m_{i}^{+}
})^{2}}. \label{eq:barycenter-FD}
\end{equation}
To take the limit$,$ we write $\dot{M}_{i}=DM_{i}\left[  q,0\right]  $,
$\dot{b}_{i}=Db_{i}\left[  q,0\right]  $. Using
Lemma~\ref{lem:weighted-calculus} and that $M_{i}>0$ we have
\[
m_{i}^{-}=M_{i}-t\dot{M}_{i}+o\left(  t\right)  ,\text{ }m_{i}^{+}=M_{i}
+t\dot{M}_{i}+o\left(  t\right)
\]
and
\[
b_{i}^{-}=b_{i}-t\dot{b}_{i}+o\left(  t\right)  ,\text{ }b_{i}^{+}=b_{i}
+t\dot{b}_{i}+o\left(  t\right)  .
\]
Therefore letting $t\rightarrow0$ in $\left(  \ref{eq:barycenter-FD}\right)  $
proves the desired conclusion.
\end{proof}

A potential velocity have both a cell mean and a fluctuation about that mean.
The preceding theorem controls movement of cell means. The following
conditional covariance estimate controls the fluctuation within each convex
cell, with the same source curvature scale $\kappa$.

\begin{lemma}
[Conditional Brascamp-Lieb inequality]\label{lem:conditional-covariance}Assume
$\left(  \ref{eq:strong-source}\right)  $ holds. Let $P\subset\mathbb{R}^{d}$
be closed and convex with $\rho\left(  P\right)  >0$. Set $b=\rho\left(
P\right)  ^{-1}\int_{P}xd\rho$. Then for any $v\in\mathbb{R}^{d},$
\begin{equation}
\frac{1}{\rho\left(  P\right)  }\int_{P}\left\vert \left(  x-b\right)  \cdot
v\right\vert ^{2}d\rho\left(  x\right)  \leqslant\frac{\left\vert v\right\vert
^{2}}{\kappa}. \label{eq:conditional-covariance}
\end{equation}

\end{lemma}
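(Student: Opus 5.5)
The plan is to show that the conditioned measure $\rho_P=\rho|_P/\rho(P)$ is itself $\kappa$-strongly log-concave, and then apply the Brascamp--Lieb inequality to the linear function $g(x)=x\cdot v$. Formally, $\rho_P$ has density $e^{-(V+\chi_P)}/Z_P$ with $Z_P=\mathcal Z\rho(P)$, where $\chi_P$ is the convex indicator of $P$ (equal to $0$ on $P$ and $+\infty$ off $P$). The potential $W=V+\chi_P$ satisfies that $W-\kappa|x|^2/2$ is convex by (\ref{eq:strong-source}). Brascamp--Lieb, in the form $\operatorname{Var}_{\mu}(g)\leqslant\frac1\kappa\int|\nabla g|^2d\mu$ for $\kappa$-strongly log-concave $\mu$, gives $\operatorname{Var}_{\rho_P}(x\cdot v)\leqslant|v|^2/\kappa$. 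Since $b$ is the mean of $\rho_P$, the left side of (\ref{eq:conditional-covariance}) is exactly this variance. Note that $\rho(P)>0$ forces $P$ to have nonempty interior, so $\rho_P$ is a genuine density.

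The work is to justify Brascamp--Lieb for a non-smooth $V$ and a constrained domain, since the classical statement wants $W\in C^2$ on $\mathbb R^d$. I would approximate in two stages.

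First, I replace $\chi_P$ by $\frac{n}{2}\operatorname{dist}(x,P)^2$, which is convex and $C^{1,1}$. The resulting potentials $V+\frac n2\operatorname{dist}(\cdot,P)^2$ remain $\kappa$-strongly convex. Their normalized measures converge to $\rho_P$ in the sense of first and second moments, by dominated convergence using the envelope (\ref{eq:Gaussian-envelope}) and the fact that $\partial P$ is Lebesgue-null.

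Second, I regularize $V$ by symmetric mollification, $V_\epsilon=V*\zeta_\epsilon$. Mollification preserves $\kappa$-strong convexity, because $V_\epsilon-\frac\kappa2|x|^2$ equals $(V-\frac\kappa2|\cdot|^2)*\zeta_\epsilon$ plus a constant, using the symmetry of $\zeta_\epsilon$. The distance term is mollified similarly. The result is a smooth potential $W_{n,\epsilon}$ with $\nabla^2W_{n,\epsilon}\geqslant\kappa I$.

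For the smooth measures $\mu_{n,\epsilon}$, I would prove Brascamp--Lieb directly. One route is the classical $L^2$ Hörmander argument: solve $Lh=g-\int g\,d\mu$ with $L=\Delta-\nabla W\cdot\nabla$, and use the Bochner identity $\int(Lh)^2=\int\|\nabla^2h\|^2+\int\nabla^2W(\nabla h,\nabla h)\geqslant\kappa\int|\nabla h|^2$. A simpler route avoids this PDE. The Bakry--Émery criterion gives a Poincaré inequality with constant $1/\kappa$ whenever $\nabla^2W\geqslant\kappa I$. Alternatively, Caffarelli contraction can be quoted. Either way I obtain $\operatorname{Var}_{\mu_{n,\epsilon}}(x\cdot v)\leqslant|v|^2/\kappa$.

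The main obstacle is the passage to the limit in the variance. I need second moments of $\mu_{n,\epsilon}$ to converge to those of $\rho_P$. The key is a uniform Gaussian tail bound: every $\mu_{n,\epsilon}$ has a $\kappa$-strongly convex potential whose minimizers stay in a bounded set. This gives an envelope of the form (\ref{eq:Gaussian-envelope}) with uniform constants, so uniform integrability of $|x|^2$ holds.

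Pointwise convergence of the densities follows from local uniform convergence $V_\epsilon\to V$, which holds because $V$ is finite and convex and hence continuous. For the penalty term, $e^{-\frac n2\operatorname{dist}^2}\to 1_P$ off $\partial P$. Dominated convergence then passes both the mean and the second moment to the limit, yielding (\ref{eq:conditional-covariance}).
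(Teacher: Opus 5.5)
Your proposal is correct and follows essentially the same route as the paper. The paper penalizes by $n\operatorname{dist}(\cdot,P)^2$, mollifies $V$ and the penalty with a symmetric mollifier to get a smooth potential with Hessian $\geqslant\kappa I$, applies Brascamp--Lieb to $x\cdot v$, and passes to the limit by dominated convergence. The only real difference is the domination step, where the paper uses a simpler argument: $V_\epsilon\geqslant V$ (Jensen with a symmetric mollifier) and the penalty is nonnegative, so every unnormalized density is bounded by $e^{-V}$, which avoids your uniform-minimizer Gaussian envelope.
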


\begin{proof}
For a normalized density $d\eta=Z_{W}^{-1}e^{-W}dx$ with $W\in C^{2}$ and
$D^{2}W\geqslant\kappa I$, the Brascamp-Lieb variance inequality \cite[Theorem
4.1]{brascamp1976extensions} applied to linear function $v\cdot x$ yields
\begin{equation}
\operatorname{Var}_{\eta}\left(  v\cdot x\right)  \leqslant\int v\cdot\left(
D^{2}W\right)  ^{-1}vd\eta\leqslant\kappa^{-1}\left\vert v\right\vert ^{2}.
\label{eq:BL-linear}
\end{equation}
To justify conditioning, let $h\left(  x\right)  =\operatorname{dist}\left(
x,P\right)  ^{2}$. It is readily verified that $h\left(  x\right)  $ is
convex, this is where the convexity of the set $P$ comes in. Let $V_{\epsilon
}=V\ast\zeta_{\epsilon}$ and $h_{\epsilon}=h\ast\zeta_{\epsilon}$ be
convolutions with the mollifier $\zeta_{\epsilon}$ supported in $B(0,\epsilon
).$ Now consider
\[
d\rho_{n,\epsilon}=Z_{n,\epsilon}^{-1}e^{-V_{\epsilon}-nh_{\epsilon}}dx,
\]
where $Z_{n,\epsilon}=\int e^{-V_{\epsilon}-nh_{\epsilon}}\,dx$ is the
normalizing constant. The functions $V_{\epsilon},h_{\epsilon}$ are smooth and
converge locally uniformly to $V,h.$ Also $h_{\epsilon}$ is nonnegative
convex. They satisfy
\[
D^{2}\left(  V_{\epsilon}+nh_{\epsilon}\right)  \geqslant\kappa I.
\]
Indeed, writing $U=V-\kappa\left\vert \cdot\right\vert ^{2}/2$, by symmetry of
the mollifier we have
\[
V_{\epsilon}\left(  x\right)  =U\ast\zeta_{\epsilon}\left(  x\right)
+\tfrac{\kappa}{2}\left\vert x\right\vert ^{2}+\tfrac{\kappa}{2}\int\left\vert
z\right\vert ^{2}\zeta_{\epsilon}\left(  z\right)  dz.
\]
so its Hessian is at least $\kappa I$. The right side is lower bounded by
$U\ast\zeta_{\epsilon}\left(  x\right)  +\tfrac{\kappa}{2}\left\vert
x\right\vert ^{2},$ which converges locally uniformly to $V$ as $\epsilon
\rightarrow0.$ Thus $V_{\epsilon}\geqslant V$. \ Therefore by nonnegativity of
$h_{\epsilon},$ $0<e^{-V_{\epsilon}-nh_{\epsilon}}\leqslant e^{-V}$. It
follows that

the normalizing constant $Z_{n,\epsilon}$ is positive and finite. Let
$a(x)\in\left\{  1,v\cdot x,(v\cdot x)^{2}\right\}  $. With any choice of
$a\left(  x\right)  ,$ the function $ae^{-V_{\epsilon}-nh_{\epsilon}}$ is
dominated by $(1+\left\vert v\right\vert \left\vert x\right\vert +\left\vert
v\right\vert ^{2}\left\vert x\right\vert ^{2})e^{-V\left(  x\right)  }$ as
$\epsilon>0$ is small. Therefore
\[
\lim_{\epsilon\rightarrow0}\int ad\rho_{n,\epsilon}=\frac{\int ae^{-V-nh}
dx}{\int e^{-V-nh}dx},
\]
thus
\[
\lim_{n\rightarrow\infty}\frac{\int ae^{-V-nh}dx}{\int e^{-V-nh}dx}=\frac
{\int_{P}ae^{-V}dx}{\int_{P}e^{-V}dx}=\frac{\int_{P}ad\rho}{\rho(P)}.
\]
Here $e^{-nh}\rightarrow1_{P}$ pointwise because $P$ is closed, and $\int
_{P}e^{-V}=\mathcal{Z}\rho\left(  P\right)  >0$. Applying $\left(
\ref{eq:BL-linear}\right)  $ to $\rho_{n,\epsilon}$ and using the above
limits, we obtain
\[
\operatorname{Var}_{\eta}\left(  v\cdot x\right)  =\int\left(  v\cdot
x\right)  ^{2}\,d\eta-\left(  \int v\cdot x\,d\eta\right)  ^{2}.
\]
The result is
\[
\frac{1}{\rho\left(  P\right)  }\int_{P}\left(  v\cdot x\right)  ^{2}
d\rho-\left(  v\cdot b\right)  ^{2}=\frac{1}{\rho\left(  P\right)  }\int
_{P}\left\vert v\cdot\left(  x-b\right)  \right\vert ^{2}d\rho\leqslant
\kappa^{-1}\left\vert v\right\vert ^{2},
\]
So $\left(  \ref{eq:conditional-covariance}\right)  $ is proved.
\end{proof}

\section{Linear potential stability and conversion to maps}

\label{sec:potential-proof}

For finite targets, an optimal coupling between the targets specifies how much
mass moves between every pair of sites. For optimal transport, the masses move
along no-crossing straight lines. By differentiating the constructed
potential, we reduce the stability to an estimate for its time velocity. The
subsequent normalization of the cell barycenter then turns this time velocity
into a conditional fluctuation and a cell mean. This is essential for an
unbounded source becasue the location of an individual cell need not be
uniformly bounded.

\begin{proposition}
[Finite targets]\label{prop:finite-strong}Assume that $\left(
\ref{eq:strong-source}\right)  $ holds, and let $\mu,\nu$ be finitely
supported probability measures on $\mathbb{R}^{d}$. There exist uniquely
determined finite convex potentials $u_{\mu},u_{\nu}\in L^{2}\left(
\rho\right)  ,$ normalized in the sense that
\[
\int u_{\mu}d\rho=\int u_{\nu}d\rho=0,
\]
such that
\[
\left(  \nabla u_{\mu}\right)  _{\#}\rho=\mu,\text{ }\left(  \nabla u_{\nu
}\right)  _{\#}\rho=\nu,
\]
and their $L^{2}\left(  \rho\right)  $ difference responds linearly to the
quadratic Wasserstein distance,
\[
\left\Vert u_{\mu}-u_{\nu}\right\Vert _{L^{2}\left(  \rho\right)  }
\leqslant\sqrt{2/\kappa}W_{2}\left(  \mu,\nu\right)  .
\]
Importantly, the coefficient is uniform over the support sizes, sites
configuration and positive masses of both targets.
\end{proposition}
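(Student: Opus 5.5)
The plan is to build the potentials from Lemma~\ref{lem:weighted-masses}, join $\mu$ to $\nu$ by a displacement interpolation of the targets, and bound the $L^2(\rho)$ norm of the centered time velocity of the potential. The bound splits into a within-cell fluctuation and a cell-mean part.

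\textbf{Existence and uniqueness.} Write $\mu=\sum_i m_i\delta_{y_i}$ with distinct $y_i$ and $m_i>0$. Lemma~\ref{lem:weighted-masses} gives intercepts $c$ with $M_i(c,y)=m_i$, so $\nabla u_{c,y}$ pushes $\rho$ to $\mu$. Subtracting $\int u_{c,y}\,d\rho$ gives the normalized $u_\mu$. It lies in $L^2(\rho)$ because $|u_{c,y}(x)|\leqslant C(1+|x|)$ and (\ref{eq:Gaussian-envelope}) holds. Uniqueness follows from Brenier's theorem: the gradient is $\rho$-a.e.\ unique, $\rho$ has full support and $u$ is finite convex, so $u$ is determined up to a constant, which the normalization fixes.

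\textbf{Interpolating path.} Take an optimal coupling $\pi=\sum_k \pi_k\delta_{(y_{i(k)},z_{j(k)})}$ between $\mu$ and $\nu=\sum_j n_j\delta_{z_j}$. Set $y_k(t)=(1-t)y_{i(k)}+tz_{j(k)}$ with fixed masses $\pi_k>0$. By cyclical monotonicity of $\operatorname{supp}\pi$ (no crossing of optimal paths), the sites $y_k(t)$ are distinct for $t\in(0,1)$, so $y(t)\in\mathcal{Y}_K$ there. Lemma~\ref{lem:weighted-masses} then gives a $C^1$ curve $c(t)$, and hence normalized potentials $u_t$. As $t\to0,1$, sites that coincide merge into a single atom of $\mu$ (resp.\ $\nu$), so $u_t\to u_\mu,u_\nu$ in $L^2(\rho)$. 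This uses locally uniform convergence of the max of affine functions with linear growth, controlled by (\ref{eq:Gaussian-envelope}), together with uniqueness in Lemma~\ref{lem:weighted-masses}. It therefore suffices to prove $\|\partial_tu_t\|_{L^2(\rho)}\leqslant\sqrt{2/\kappa}\,W_2(\mu,\nu)$ for each $t\in(0,1)$, and then integrate in $t$.

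\textbf{Velocity bound.} With $v_k=z_{j(k)}-y_{i(k)}$ and $q=\dot c$, on $P_k$ we have
\[
\partial_tu_t=x\cdot v_k+q_k-\lambda,\qquad \lambda=\textstyle\sum_k\int_{P_k}(x\cdot v_k+q_k)\,d\rho .
\]
Note that $\sum_k\pi_k|v_k|^2=W_2^2$. Decompose $\partial_tu_t=(x-b_k)\cdot v_k+a_k$ on $P_k$, where $a_k=b_k\cdot v_k+q_k-\lambda$ and $\sum_k\pi_ka_k=0$. The two pieces are orthogonal in $L^2(\rho)$ because each first piece has zero mean on its cell. Lemma~\ref{lem:conditional-covariance} bounds the fluctuation part by $\kappa^{-1}\sum_k\pi_k|v_k|^2=W_2^2/\kappa$. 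For the cell-mean part, Lemma~\ref{lem:weighted-calculus}(iii) lets us solve $DM_k[\tilde q,0]=\pi_ka_k$ for a test direction $\tilde q$, since the right side sums to zero. The symmetry of the face integrals in (\ref{eq:mass-derivative})–(\ref{eq:mm-derivative}) gives
\[
\sum_k\tilde q_k\,DM_k[q,v]=\sum_k\bigl(q_k\,DM_k[\tilde q,0]+v_k\cdot DB_k[\tilde q,0]\bigr).
\]
The left side vanishes because the masses are constant along the path. Substituting $DB_k=\dot M_kb_k+M_k\dot b_k$ yields
\[
\sum_k\pi_ka_k^2=-\sum_k\pi_kv_k\cdot Db_k[\tilde q,0];
\]
here $\lambda$ drops out because $\sum\dot M_k=0$. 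Cauchy–Schwarz and Theorem~\ref{thm:barycenter} then give
\[
\sum_k\pi_ka_k^2\leqslant W_2\,\kappa^{-1/2}\Bigl(\sum_k\pi_ka_k^2\Bigr)^{1/2},
\]
that is, $\sum\pi_ka_k^2\leqslant W_2^2/\kappa$. Adding the two parts gives $\|\partial_tu_t\|^2_{L^2(\rho)}\leqslant 2W_2^2/\kappa$.

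\textbf{Main obstacle.} I expect the delicate step to be the endpoint behaviour. The sites $y_k(t)$ collide at $t=0,1$, and possibly at interior times if the no-crossing argument fails for coinciding sources or targets. The remedy is to work on $[\delta,1-\delta]$, prove the $L^2(\rho)$ continuity of $t\mapsto u_t$ up to the endpoints by uniqueness plus compactness of the normalized intercepts, and let $\delta\to0$. One also has to justify differentiating $t\mapsto u_t$ in $L^2(\rho)$ under the integral. This should follow from the $C^1$ dependence of $c(t)$ and the linear-growth domination already used in Lemma~\ref{lem:weighted-calculus}.
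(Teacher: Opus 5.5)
Your proposal is correct and follows the paper's proof essentially step for step. It uses the same no-crossing displacement interpolation with fixed masses and the same splitting $\partial_t u_t=(x-b_k)\cdot v_k+a_k$ into a conditionally centered fluctuation, bounded by the conditional Brascamp--Lieb inequality, and centered cell means, bounded by solving $DM_k[\tilde q,0]=\pi_k a_k$ through the range of $G$, the symmetric face identity and the barycenter derivative bound, which gives $\|\partial_t u_t\|^2_{L^2(\rho)}\leqslant 2W_2^2/\kappa$.

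The endpoint step, which you leave vague, is handled in the paper directly from your own estimate: $\pi_k a_k^2\leqslant W_2^2/\kappa$ and $|b_k|\leqslant(\pi_k^{-1}\int|x|^2d\rho)^{1/2}$ bound $|\dot c_k-\lambda|=|a_k-b_k\cdot v_k|$ uniformly in $t$. So the normalized intercepts are Lipschitz and extend to $[0,1]$, and a.e.\ convergence of $\nabla u_t$ then identifies the endpoint push-forwards. Your worry about interior collisions is unnecessary, since distinct pairs in the support of an optimal plan never meet for $0<t<1$.
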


\begin{proof}
\textbf{1}. Let $\mu_{0}=\mu$, $\mu_{1}=\nu$ and suppose they are given in the
form
\[
\mu_{0}=\sum_{a=1}^{n_{0}}\alpha_{a}\delta_{z_{a}^{0}},\text{ }\mu_{1}
=\sum_{b=1}^{n_{1}}\beta_{b}\delta_{z_{b}^{1}},\text{ }\alpha_{a},\text{
}\beta_{b}>0,
\]
where $n_{0},n_{1}\geqslant1,$ $\left\{  z_{a}^{0}\right\}  $ and $\left\{
z_{b}^{1}\right\}  $ are respectively distinct support points, $\sum
_{a=1}^{n_{0}}\alpha_{a}=\sum_{b=1}^{n_{1}}\beta_{b}=1$. Let $\pi\in\Pi\left(
\mu_{0},\mu_{1}\right)  $ be an optimal coupling for the quadratic cost. Let
$N=\left\vert \operatorname{supp}\pi\right\vert \geqslant1$ be the number of
distinct mass pairs with positive masses. Then $N\leqslant n_{0}n_{1}.$
Enumerate these coupling pairs as
\[
\left(  y_{i}^{0},y_{i}^{1}\right)  \in\operatorname{supp}\mu_{0}
\times\operatorname{supp}\mu_{1},i=1,...,N
\]
and let $m_{i}=\pi\left(  \left(  y_{i}^{0},y_{i}^{1}\right)  \right)  $. Then
$m_{i}>0$ for all $i$ and $\sum_{i}m_{i}=1.$ Thus
\begin{equation}
\pi=\sum_{i=1}^{N}m_{i}\delta_{\left(  y_{i}^{0},y_{i}^{1}\right)  },\text{
}\mu_{\ell}=\sum_{i}m_{i}\delta_{y_{i}^{\ell}}\text{, }\ell=0,1.
\label{eq:optimal-pi-marginal}
\end{equation}
Let
\[
v_{i}=y_{i}^{1}-y_{i}^{0},\text{ }i=1,...,N.
\]
With $w=W_{2}\left(  \mu,\nu\right)  ,$ we have
\begin{equation}
w^{2}=W_{2}(\mu,\nu)^{2}=\sum_{i}m_{i}|v_{i}|^{2}. \label{eq:finite-velocity}
\end{equation}
Define the displacement interplolation
\[
y_{i}\left(  t\right)  =\left(  1-t\right)  y_{i}^{0}+ty_{i}^{1},\text{
}0\leqslant t\leqslant1.
\]
The optimality implies that different interpolations never meet in the middle
(e.g. \cite[Chapter 5]{villani2009optimal}), that is, for any distinct pairs
$\left(  y_{i}^{0},y_{i}^{1}\right)  ,\left(  y_{j}^{0},y_{j}^{1}\right)  $
with $i\neq j,$
\begin{equation}
\left\vert y_{i}\left(  t\right)  -y_{j}\left(  t\right)  \right\vert >0\text{
for }0<t<1. \label{eq:no-crossing}
\end{equation}
Thus Lemma~\ref{lem:weighted-masses} applies and yields a $C^{1}$ path
$\bar{c}=\left(  \bar{c}_{i}\left(  t\right)  \right)  _{i=1}^{N}:\left(
0,1\right)  \mapsto H_{m}$ satisfying for $0<t<1$,
\begin{equation}
M_{i}\left(  \bar{c}\left(  t\right)  ,y\left(  t\right)  \right)
=m_{i},\text{ }i=1,...,N\text{ and}\sum_{i}m_{i}\bar{c}_{i}\left(  t\right)
=0. \label{eq:fixed-mass-path}
\end{equation}
Here $y(t)=\left(  y_{i}\left(  t\right)  \right)  _{i=1}^{N}\in
\mathcal{Y}_{N}$. Note that Lemma~\ref{lem:weighted-masses} only applies to
the interior of the path where no path crossing occurs, so nothing has been
said about $t=0,1.$ For $0<t<1,$ we define the corresponding cells
\[
P_{i}\left(  t\right)  =P_{i}\left(  \bar{c}\left(  t\right)  ,y\left(
t\right)  \right)  ,\text{ }M_{i}\left(  t\right)  =M_{i}\left(  \bar
{c}\left(  t\right)  ,y\left(  t\right)  \right)  =m_{i}
\]
and their barycenters
\[
B_{i}\left(  t\right)  =\int_{P_{i}\left(  t\right)  }xd\rho\left(  x\right)
,\text{ }b_{i}\left(  t\right)  =\frac{B_{i}\left(  t\right)  }{m_{i}}.
\]
As before, we define the potential associated with these cells as
\[
\bar{u}_{t}\left(  x\right)  =\max_{i}\left\{  x\cdot y_{i}\left(  t\right)
+\bar{c}_{i}\left(  t\right)  \right\}  .
\]
Denote the mean of the potential by
\begin{equation}
a\left(  t\right)  =\int\bar{u}_{t}\left(  x\right)  d\rho\left(  x\right)  ,
\label{eq:mean-ubar}
\end{equation}
which may be non-zero at this stage. To normalize it so that it has zero mean,
we define
\begin{equation}
u_{t}\left(  x\right)  =\bar{u}_{t}\left(  x\right)  -a\left(  t\right)
=\max_{i}\left\{  x\cdot y_{i}\left(  t\right)  +c_{i}\left(  t\right)
\right\}  , \label{eq:utx}
\end{equation}
where
\begin{equation}
c_{i}\left(  t\right)  =\bar{c}_{i}\left(  t\right)  -a\left(  t\right)  .
\label{eq:cbar-shift}
\end{equation}
Hence
\begin{equation}
\int u_{t}\left(  x\right)  d\rho\left(  x\right)  =0.
\label{eq:mean-normalization}
\end{equation}
Write $c\left(  t\right)  =\left(  c_{i}\left(  t\right)  \right)  _{i=1}
^{N},$ $\bar{c}\left(  t\right)  =\left(  \bar{c}_{i}\left(  t\right)
\right)  _{i=1}^{N}.$ Since the definition of $P_{i}\left(  t\right)  ,$
$M_{i}\left(  t\right)  $ only depends on the difference $c_{i}\left(
t\right)  -c_{j}\left(  t\right)  $ and $c_{i}\left(  t\right)  -c_{j}\left(
t\right)  =\bar{c}_{i}\left(  t\right)  -\bar{c}_{j}\left(  t\right)  ,$ we
can replace $\bar{c}\left(  t\right)  $ with $c\left(  t\right)  $ and get
\[
P_{i}(t)=P_{i}(c(t),y(t)),\text{ }M_{i}(c(t),y(t))=m_{i}.
\]

\textbf{2}. Next we show that $c\left(  t\right)  \in C^{1}\left(  \left(
0,1\right)  ,\mathbb{R}^{N}\right)  .$ Fix $t\in\left(  0,1\right)  $ and a
subinterval $J\subset\left(  0,1\right)  $ that contains $t$, the difference
quotient is controlled by a function that is integrable w.r.t. $\rho$,
\begin{equation}
\left\vert \frac{\bar{u}_{t+\epsilon}\left(  x\right)  -\bar{u}_{t}\left(
x\right)  }{\epsilon}\right\vert \leqslant\max_{i}\left\vert v_{i}\right\vert
\left\vert x\right\vert +\max_{i}\sup_{\tau\in J^{\prime}}\left\vert \bar
{c}_{i}^{\prime}\left(  \tau\right)  \right\vert \leqslant C_{J}\left(
1+\left\vert x\right\vert \right)  , \label{eq:ubar-quotient-bound}
\end{equation}
where $J^{\prime}$ is a slightly larger compact subinterval containing $J$ so
that $t+\epsilon\in J^{\prime}$ whenever $\epsilon>0$ is sufficiently small.
The constant $C_{J}$ may depend on $t,$ but note the limit is taken w.r.t.
$\epsilon>0.$ For the above fixed $t,$ let $D_{i}\left(  t\right)
=\operatorname*{int}P_{i}\left(  t\right)  $ and
\[
E_{t}=\mathbb{R}^{d}\backslash\bigcup_{i}D_{i}\left(  t\right)  .
\]
Then $E_{t}$ has zero Lebesgue measure hence zero $\rho$-mass. So for any
$x\in\mathbb{R}^{N}\backslash E_{t}$, there is a unique maximizing index
$i=i\left(  x\right)  $ for $\bar{u}_{t}\left(  x\right)  ,$
\[
\bar{u}_{t}\left(  x\right)  =x\cdot y_{i}\left(  t\right)  +\bar{c}
_{i}\left(  t\right)  >\max_{j:j\neq i}\left\{  x\cdot y_{j}\left(  t\right)
+\bar{c}_{j}\left(  t\right)  \right\}  .
\]
So up to the $\rho$-null set $E_{t}$, the pointwise limit holds,
\[
\lim_{\epsilon\rightarrow0}\frac{\bar{u}_{t+\epsilon}\left(  x\right)
-\bar{u}_{t}\left(  x\right)  }{\epsilon}\rightarrow\sum_{i}1_{P_{i}\left(
t\right)  }\left(  x\right)  \frac{x\cdot\left(  y_{i}\left(  t+\epsilon
\right)  -y_{i}\left(  t\right)  \right)  +\left(  \bar{c}_{i}\left(
t+\epsilon\right)  -\bar{c}_{i}\left(  t\right)  \right)  }{\epsilon}.
\]
Using $\left(  \ref{eq:mean-ubar}\right)  ,$ the difference quotient bound
$\left(  \ref{eq:ubar-quotient-bound}\right)  $ and the dominated convergence
theorem therefore yield
\begin{equation}
a^{\prime}\left(  t\right)  =\lim_{\epsilon\rightarrow0}\int\frac{\bar
{u}_{t+\epsilon}\left(  x\right)  -\bar{u}_{t}\left(  x\right)  }{\epsilon
}d\rho\left(  x\right)  =\sum_{i}\left\{  B_{i}\left(  t\right)  \cdot
v_{i}+m_{i}\bar{c}_{i}^{\prime}\left(  t\right)  \right\}  .
\label{eq:a-derivative}
\end{equation}
The derivative is continuous by Lemma~\ref{lem:weighted-calculus}, i.e.,
$a\left(  t\right)  \in C^{1}\left(  (0,1)\right)  $. Hence the curve
$c\left(  t\right)  $ defined by $\left(  \ref{eq:cbar-shift}\right)  \ $is
$C^{1}.$

\textbf{3}. We now bound the potential difference $\left\Vert u_{s}
-u_{t}\right\Vert _{L^{2}(\rho)}^{2}$ for any $0<s<t<1$ and relate it to our
desired conclusion. For this goal, we first need to compute $\left\Vert
\partial_{t}u_{t}\right\Vert _{L^{2}(\rho)}^{2}.$ Fix $0<t<1$. Define, for
$i\neq j$,
\[
F_{ij}\left(  t\right)  =P_{i}\left(  t\right)  \cap P_{j}\left(  t\right)  ,
\]
\[
g_{ij}\left(  t\right)  =\int_{F_{ij}\left(  t\right)  }\frac{f\left(
x\right)  }{\left\vert y_{i}\left(  t\right)  -y_{j}\left(  t\right)
\right\vert }d\mathcal{H}^{d-1}\left(  x\right)  ,
\]
\[
\beta_{ij}\left(  t\right)  =\int_{F_{ij}\left(  t\right)  }\frac{xf\left(
x\right)  }{\left\vert y_{i}\left(  t\right)  -y_{j}\left(  t\right)
\right\vert }d\mathcal{H}^{d-1}\left(  x\right)  .
\]
These quantities are well-defined by the no-crossing property $\left(
\ref{eq:no-crossing}\right)  $ and the assumptions of $\rho.$ To simplify
notations, in the following we write $P_{i},$ $M_{i},$ $B_{i},$ $b_{i}$
without explicitly referring to the dependence on $t.$ We also define
\[
r_{i}=c_{i}^{\prime}\left(  t\right)  ,\text{ }r=\left(  r_{i}\left(
t\right)  \right)  _{i=1}^{N},\text{ }v=\left(  v_{i}\right)  _{i=1}^{N}
\]
and
\[
k_{i}=r_{i}+b_{i}\cdot v_{i}.
\]
For any $t\in(0,1),$ $x\in P_{i}\backslash E_{t}$. Repeating the difference
quotient procedure that has been carried out on $\bar{u}_{t},$ or simply
differentiating $\left(  \ref{eq:utx}\right)  $ w.r.t. $t$ and using $\left(
\ref{eq:a-derivative}\right)  ,$ we have
\begin{equation}
\partial_{t}u_{t}\left(  x\right)  =x\cdot v_{i}+r_{i}=\left(  x-b_{i}\right)
\cdot v_{i}+k_{i}. \label{eq:partial-differential-ut}
\end{equation}
Hence the dominated convergence theorem yields,
\begin{equation}
0=\int\partial_{t}u_{t}d\rho=\sum_{i}\left(  B_{i}\cdot v_{i}+m_{i}
r_{i}\right)  =\sum_{i}m_{i}k_{i}. \label{eq:centered-velocity}
\end{equation}
Here we used $B_{i}=m_{i}b_{i}.$ Differentiating $\left(
\ref{eq:fixed-mass-path}\right)  $ w.r.t. $t\in(0,1)$ we get
\[
0=\partial_{t}M_{i}\left(  c\left(  t\right)  ,y\left(  t\right)  \right)
=DM_{i}\left(  c\left(  t\right)  ,y\left(  t\right)  \right)  \left[
c^{\prime}\left(  t\right)  ,y^{\prime}\left(  t\right)  \right]
=DM_{i}\left[  r,v\right]  .
\]
Lemma \ref{lem:weighted-calculus} and the notations just introduced therefore
give, for any $q\in\mathbb{R}^{N}$,
\begin{align*}
0  &  =\sum_{i}q_{i}DM_{i}\left[  r,v\right]  =\sum_{i}\sum_{j\neq i}
q_{i}\left\{  g_{ij}\left(  r_{i}-r_{j}\right)  +\beta_{ij}\cdot\left(
v_{i}-v_{j}\right)  \right\} \\
&  =\sum_{i<j}\left(  q_{i}-q_{j}\right)  \left\{  g_{ij}\left(  r_{i}
-r_{j}\right)  +\beta_{ij}\cdot\left(  v_{i}-v_{j}\right)  \right\} \\
&  =\sum_{i}r_{i}\sum_{j\neq i}g_{ij}\left(  q_{i}-q_{j}\right)  +\sum
_{i}v_{i}\cdot\sum_{j\neq i}\beta_{ij}\left(  q_{i}-q_{j}\right) \\
&  =\sum_{i}r_{i}DM_{i}\left[  q,0\right]  +\sum_{i}v_{i}\cdot DB_{i}\left[
q,0\right]  .
\end{align*}
Since $B_{i}=m_{i}b_{i}=M_{i}b_{i}$, the equation continues
\begin{align*}
0  &  =\sum_{i}r_{i}DM_{i}\left[  q,0\right]  +\sum_{i}v_{i}\cdot\left\{
b_{i}DM_{i}\left[  q,0\right]  +m_{i}Db_{i}\left[  q,0\right]  \right\} \\
&  =\sum_{i}\left(  r_{i}+b_{i}\cdot v_{i}\right)  DM_{i}\left[  q,0\right]
+\sum_{i}m_{i}v_{i}\cdot Db_{i}\left[  q,0\right] \\
&  =\sum_{i}k_{i}DM_{i}\left[  q,0\right]  +\sum_{i}m_{i}v_{i}\cdot
Db_{i}\left[  q,0\right]  .
\end{align*}
We therefore obtain
\begin{equation}
\sum_{i}k_{i}DM_{i}\left[  q,0\right]  =-\sum_{i}m_{i}v_{i}\cdot Db_{i}\left[
q,0\right]  . \label{eq:center-cancellation}
\end{equation}
By $\left(  \ref{eq:centered-velocity}\right)  $ and the range assertion of
Lemma~\ref{lem:weighted-calculus}, there exists some $q\in\mathbb{R}^{N}$ such
that
\begin{equation}
DM_{i}\left[  q,0\right]  =m_{i}k_{i}. \label{eq:solve-test-direciton}
\end{equation}
Substituting it into $\left(  \ref{eq:center-cancellation}\right)  $ and using
Theorem~\ref{thm:barycenter} we obtain that
\begin{align*}
\sum_{i}m_{i}k_{i}^{2}  &  =\left\vert -\sum_{i}m_{i}v_{i}\cdot Db_{i}\left[
q,0\right]  \right\vert \leqslant\left(  \sum_{i}m_{i}\left\vert
v_{i}\right\vert ^{2}\right)  ^{1/2}\left(  \sum_{i}m_{i}\left\vert
Db_{i}\left[  q,0\right]  \right\vert ^{2}\right)  ^{1/2}\\
&  \leqslant\frac{w}{\sqrt{\kappa}}\left(  \sum_{i}\frac{\left\vert
DM_{i}\left[  q,0\right]  \right\vert ^{2}}{m_{i}}\right)  ^{1/2}=\frac
{w}{\sqrt{\kappa}}\left(  \sum_{i}m_{i}k_{i}^{2}\right)  ^{1/2}.
\end{align*}
If $\sum_{i}m_{i}k_{i}^{2}>0,$ then division by the square root of this
positive number yields
\begin{equation}
\left(  \sum_{i}m_{i}k_{i}^{2}\right)  ^{1/2}\leqslant\frac{w}{\sqrt{\kappa}}.
\label{eq:center-velocity-bound}
\end{equation}
But clearly this still holds if $\sum_{i}m_{i}k_{i}^{2}=0.$ For each cell,
$b_{i}$ and $v_{i}$ are constant with respect to $x$, so
\[
\int_{P_{i}}\left(  x-b_{i}\right)  \cdot v_{i}d\rho=B_{i}\cdot v_{i}
-m_{i}b_{i}\cdot v_{i}=0.
\]
Furthermore by Lemma~\ref{lem:conditional-covariance}
\[
\int_{P_{i}}\left\vert \left(  x-b_{i}\right)  \cdot v_{i}\right\vert
^{2}d\rho\leqslant\frac{m_{i}}{\kappa}\left\vert v_{i}\right\vert ^{2}.
\]
Expanding the square in $\left(  \ref{eq:partial-differential-ut}\right)  $
and combing the above inequalities, we therefore get
\begin{align*}
\left\Vert \partial_{t}u_{t}\right\Vert _{L^{2}(\rho)}^{2}  &  =\sum
_{i}\left\{  \int_{P_{i}}\left\vert \left(  x-b_{i}\right)  \cdot
v_{i}\right\vert ^{2}d\rho+2k_{i}\left(  B_{i}-m_{i}b_{i}\right)  \cdot
v_{i}+m_{i}k_{i}^{2}\right\} \\
&  \leqslant\frac{1}{\kappa}\sum_{i}m_{i}|v_{i}|^{2}+\sum_{i}m_{i}k_{i}
^{2}\leqslant\frac{2w^{2}}{\kappa}.
\end{align*}
See Remark \ref{rmk:measurability} for details on the measurability of
$\partial_{t}u_{t}$. For $0<s<t<1$, the Jansen's inequality w.r.t. the $\tau
$-integral implies that
\begin{align}
\left\Vert u_{t}-u_{s}\right\Vert _{L^{2}\left(  \rho\right)  }^{2}  &
=\int\left\vert \int_{s}^{t}\partial_{\tau}u_{\tau}\left(  x\right)
d\tau\right\vert ^{2}d\rho\left(  x\right)  =\left(  t-s\right)  ^{2}
\int\left\vert \frac{1}{t-s}\int_{s}^{t}\partial_{\tau}u_{\tau}\left(
x\right)  d\tau\right\vert ^{2}d\rho\left(  x\right)
\label{eq:interval-norm2}\\
&  \leqslant(t-s)\int_{s}^{t}\Vert\partial_{\tau}u_{\tau}\Vert_{L^{2}(\rho
)}^{2}d\tau\leqslant\frac{2w^{2}}{\kappa}(t-s)^{2}.\nonumber
\end{align}

\textbf{4}. It now remains to consider the extreme points $t=0$ and $t=1.$ In
the remaining proof $\ell$ always denotes either $0$ or $1$, and $\mu_{\ell}$
is defined by $\left(  \ref{eq:utx}\right)  $. By H\"{o}lder's inequality,
\[
\left\vert b_{i}\left(  t\right)  \right\vert \leqslant\frac{1}{m_{i}}
\int_{P_{i}}|x|d\rho\leqslant\left(  \frac{1}{m_{i}}\int_{P_{i}}|x|^{2}
d\rho\right)  ^{1/2}\leqslant\left(  \frac{1}{m_{i}}\int|x|^{2}d\rho\right)
^{1/2}.
\]
By $\left(  \ref{eq:center-velocity-bound}\right)  ,$ $m_{i}k_{i}^{2}\leqslant
w^{2}/\kappa.$ Hence
\[
\left\vert c_{i}^{\prime}\left(  t\right)  \right\vert =\left\vert k_{i}
-b_{i}\cdot v_{i}\right\vert \leqslant\frac{w}{\sqrt{\kappa m_{i}}}+\left\vert
v_{i}\right\vert \left(  \frac{1}{m_{i}}\int|x|^{2}d\rho\right)
^{1/2}\triangleq K_{i}.
\]
Thus
\[
\left\vert c_{i}\left(  t\right)  -c_{i}\left(  s\right)  \right\vert
\leqslant K_{i}\left\vert t-s\right\vert
\]
and each $c_{i}$ extends continuously to $\left[  0,1\right]  $. Sending
$t\rightarrow\ell$ in $\left(  \ref{eq:utx}\right)  $ shows that $u_{0},u_{1}$
are finite and convex. Moreover
\[
\left\vert u_{t}\left(  x\right)  -u_{\ell}\left(  x\right)  \right\vert
\leqslant\max_{i}\left\{  \left\vert x\right\vert \left\vert y_{i}\left(
t\right)  -y_{i}\left(  \ell\right)  \right\vert +\left\vert c_{i}\left(
t\right)  -c_{i}\left(  \ell\right)  \right\vert \right\}  \rightarrow0\text{
as }t\rightarrow\ell,\ \ell=0,1
\]
Thus the path $u_{t}$ extends continuously to $\left[  0,1\right]  $ for $x$
in any compact subsets of $\mathbb{R}^{d}$. Since $y_{i}$ and $c_{i}$ are
bounded on $\left[  0,1\right]  $, so the dominated convergence theorem yields
\begin{equation}
\left\Vert u_{t}-u_{\ell}\right\Vert _{L^{2}(\rho)}^{2}\rightarrow0\text{ as
}t\rightarrow\ell,\ \ell=0,1. \label{eq:boundary-norm2}
\end{equation}
In particular $\int u_{\ell}d\rho=0$.

\textbf{5}. We finally verify that the convex function $u_{\ell},$ $\ell=0,1$
give the optimal maps. As $t\rightarrow\ell,$ some of the paths in the list
$\left\{  y_{i}\left(  t\right)  \right\}  $ will collapse. Let
\[
Z_{\ell}=\left\{  y_{i}^{\ell}:1\leqslant i\leqslant N\right\}
\]
enumerate the \textit{distinct} sites that these collapsing paths will create
as $t\rightarrow\ell$. Choose the largest $c_{i}\left(  \ell\right)  $
corresponding to a given site $z\in Z_{\ell},$
\[
C_{\ell}\left(  z\right)  =\max_{i:y_{i}^{\ell}=z}c_{i}\left(  \ell\right)  .
\]
Thus
\[
u_{\ell}(x)=\max_{z\in Z_{\ell}}\left\{  x\cdot z+C_{\ell}\left(  z\right)
\right\}  .
\]
Let $\mathcal{E}_{\ell}$ be the union of the intersections
\[
\bigcup_{z,z^{\prime}\in Z_{\ell}:z\neq z^{\prime}}\left\{  x:x\cdot
z+C_{\ell}\left(  z\right)  =x\cdot z^{\prime}+C_{\ell}\left(  z^{\prime
}\right)  \right\}  .
\]
It is a $\rho$-null set. If $Z_{\ell}$ contains only one element,
$\mathcal{E}_{\ell}$ is empty. Let
\[
L_{i}\left(  t,x\right)  =x\cdot y_{i}\left(  t\right)  +c_{i}\left(
t\right)  ,\text{ }i=1,...,N.
\]
Denote by
\[
I_{\ell}\left(  x\right)  =\left\{  i:L_{i}\left(  \ell,x\right)  =u_{\ell
}\left(  x\right)  \right\}
\]
the set of maximizing indices. For $x\notin\mathcal{E}_{\ell},$ $u_{\ell}(x)$
has a unique maximizing slope among $Z_{\ell}$, denoted by $z_{\ell}(x)$. This
does not mean that there is a unique maximizing index in $I_{\ell}(x),$
several different indices in $I_{\ell}(x)$ might give the same $u_{\ell}(x).$
But since $x\notin\mathcal{E}_{\ell},$ $z_{\ell}(x)$ is unqiue and the strict
domination holds
\begin{equation}
u_{\ell}(x)=x\cdot z_{\ell}(x)+C_{\ell}\left(  z_{\ell}(x)\right)
>\max_{i\notin I_{\ell}\left(  x\right)  }L_{i}\left(  \ell,x\right)  .
\label{eq:strict-domination}
\end{equation}
Since $L_{i}\left(  \ell,x\right)  ,$ $i=1,...,N$ are continuous in $x,$ small
perturbations in $x$ will not change the strict domination. In other words,
when $t$ approaches $\ell$, $z_{\ell}(x)$ remains constant around some
neighbourhood of $x,$ hence, at this $x$,
\[
\nabla u_{\ell}\left(  x\right)  =z_{\ell}\left(  x\right)  .
\]
Analogously, Let $\mathcal{E}_{t}$ be the union of the intersections at $t,$
\[
\bigcup_{i\neq j}\left\{  x:L_{i}\left(  t,x\right)  =L_{j}\left(  t,x\right)
\right\}  .
\]
Let $t_{n}\rightarrow\ell$ as $n\rightarrow\infty.$ The countable union
$\left(  \cup\mathcal{E}_{t_{n}}\right)  \cup\mathcal{E}_{\ell}$ has zero
$\rho$-mass. For any $x$ outside this union, the same strict domination
$\left(  \ref{eq:strict-domination}\right)  $ continues to hold. Since there
is no crossing $\left(  \ref{eq:no-crossing}\right)  $ for $t_{n}\in\left(
0,1\right)  ,$ stronger strict domination can be said: there is a unique
maximizing index $i\left(  t_{n},x\right)  $ for $u_{t_{n}}(x)$ (recall
$\left(  \ref{eq:utx}\right)  $) such that, for all large $n,$
\begin{equation}
u_{t_{n}}(x)=x\cdot y_{i\left(  t_{n},x\right)  }(t_{n})+c_{i\left(
t_{n},x\right)  }\left(  t_{n}\right)  >\max_{i\neq i\left(  t_{n},x\right)
}L_{i}\left(  t_{n},x\right)  . \label{eq:strict-domination-tn}
\end{equation}
Since we already show that $y_{i}\left(  t\right)  ,$ $c_{i}\left(  t\right)
$ are continuous on $\left[  0,1\right]  $ and the functions $L_{i}\left(
t,x\right)  ,$ $i=1,...,N$ are continuous in $\left(  t,x\right)  .$ The two
sides of $\left(  \ref{eq:strict-domination-tn}\right)  $ converge to those of
$\left(  \ref{eq:strict-domination}\right)  $. So small perturbations in
$\left(  t,x\right)  $ will not change the strict domination. In other words,
when $t_{n}$ stays close to $\ell,$ $y_{i\left(  t_{n},x\right)  }(t_{n})$
will remain constant in some small neighbourhood of $x$. It follows that
\begin{equation}
\nabla u_{t_{n}}\left(  x\right)  =y_{i\left(  t_{n},x\right)  }
(t_{n})\rightarrow z_{\ell}\left(  x\right)  =\nabla u_{\ell}\left(  x\right)
\text{ as }n\rightarrow\infty. \label{eq:grad-convergence}
\end{equation}
Note that $i\left(  t_{n},x\right)  $ might not converge, instead it might
jump between different values in $I_{\ell}\left(  x\right)  .$ But $I_{\ell
}\left(  x\right)  $ is a finite set, so convergent subsequence of $i\left(
t_{n},x\right)  $ does exist. Since $\left(  \ref{eq:grad-convergence}\right)
$ holds for all $x$ outside $\left(  \cup\mathcal{E}_{t_{n}}\right)
\cup\mathcal{E}_{\ell},$ we obtain by the dominated convergence theorem that,
for any $\varphi\in C_{b}\left(  \mathbb{R}^{d}\right)  $,
\[
\int\varphi(\nabla u_{\ell})d\rho=\lim_{n\rightarrow\infty}\int\varphi\left(
\nabla u_{t_{n}}\right)  d\rho=\lim_{n\rightarrow\infty}\sum_{i}m_{i}
\varphi(y_{i}(t_{n}))=\sum_{i}m_{i}\varphi(y_{i}^{\ell})=\int\varphi
d\mu_{\ell}.
\]
Therefore we have proved that
\[
(\nabla u_{\ell})_{\#}\rho=\mu_{\ell},\text{ }\ell=0,1.
\]
The triangle inequality and $\left(  \ref{eq:boundary-norm2}\right)  $ yield
\[
\left\vert \left\Vert u_{t}-u_{s}\right\Vert _{L^{2}\left(  \rho\right)
}-\left\Vert u_{1}-u_{0}\right\Vert _{L^{2}\left(  \rho\right)  }\right\vert
\leqslant\left\Vert u_{t}-u_{1}\right\Vert _{L^{2}\left(  \rho\right)
}+\left\Vert u_{s}-u_{0}\right\Vert _{L^{2}\left(  \rho\right)  }\rightarrow0
\]
as $s\rightarrow0$, $t\rightarrow1$. Hence by $\left(  \ref{eq:interval-norm2}
\right)  $ we get
\[
\left\Vert u_{1}-u_{0}\right\Vert _{L^{2}\left(  \rho\right)  }\leqslant
\sqrt{2/\kappa}w=\sqrt{2/\kappa}W_{2}\left(  \mu,\nu\right)  .
\]
The desired potentials are $u_{\mu}=u_{0},$ $u_{\nu}=u_{1}.$

\textbf{6}. Finally we prove the uniqueness of the normalized potentials. We
only prove the uniqueness of $u_{\mu},$ the other follows analogously. If $u$
and $\bar{u}$ are finite convex potentials in $L^{2}(\rho)$ with zero mean
w.r.t. $\rho$ and $\left(  \nabla u\right)  _{\#}\rho=\left(  \nabla\bar
{u}\right)  _{\#}\rho=\mu$, Brenier's theorem \cite[Theorem 2.32]
{villani2003topics} tells us that $\nabla u=\nabla\bar{u},$ $\rho$-a.e..
Because $f>0$ everywhere, the same equality holds for almost all points under
Lebesgue measure. Hence the locally Lipschitz function $h=u-\bar{u}$ has zero
weak gradient \cite[Section 5.8.3 Theorem 5]{evans2022partial}. Let
$\eta_{\epsilon}$ be a standard mollifier supported in $\bar{B}\left(
0,\epsilon\right)  $. Then the convolution $h\ast\eta_{\epsilon}$ is smooth on
$\mathbb{R}^{d}$ and has zero gradient everywhere. Thus for any $x,$
$y\in\mathbb{R}^{d}$,
\[
\left(  h\ast\eta_{\epsilon}\right)  \left(  y\right)  -\left(  h\ast
\eta_{\epsilon}\right)  (x)=\int_{0}^{1}\nabla\left(  h\ast\eta_{\epsilon
}\right)  \left(  x+\theta\left(  y-x\right)  \right)  \cdot\left(
y-x\right)  d\theta=0.
\]
So $h\ast\eta_{\epsilon}$ is constant for all $\epsilon>0.$ Sending
$\epsilon\rightarrow0$ yields that $h$ is constant on $\mathbb{R}^{d}$. Using
the zero mean condition, we conclude that the constant must be zero, thus
$u=\bar{u}$ on $\mathbb{R}^{d}.$ The proof is complete.
\end{proof}

\begin{remark}
\label{rmk:measurability}In the proof of Proposition \ref{prop:finite-strong},
we integrated $\partial_{t}u_{t}\left(  x\right)  $ against $dtd\rho.$ The
measurability of this function needs verification. Let
\[
L_{i}\left(  t,x\right)  =x\cdot y_{i}\left(  t\right)  +c_{i}\left(
t\right)  ,\text{ }i=1,...,N.
\]
Define the region where $L_{i}$ strictly dominates the remaining,
\[
D_{i}=\left\{  \left(  t,x\right)  \in\left(  0,1\right)  \times\mathbb{R}
^{d}:L_{i}\left(  t,x\right)  >L_{j}\left(  t,x\right)  \text{ for all }j\neq
i\right\}  .
\]
Their union gives the region where a unique plane strictly dominates others,
also define
\[
\mathcal{E}=\left(  \left(  0,1\right)  \times\mathbb{R}^{d}\right)
\backslash\bigcup_{i}D_{i}.
\]
Since $L_{i}\left(  t,x\right)  $ is conitnuous in $\left(  t,x\right)  $,
$D_{i}$ is an open subset of $\left(  0,1\right)  \times\mathbb{R}^{d}$. The
union $\bigcup_{i}D_{i}$ is also open. Thus $\mathcal{E}$ is a Borel subset of
$\left(  0,1\right)  \times\mathbb{R}^{d}$. Let $\lambda$ be the Lebesgue
measure. Equip $\left(  0,1\right)  \times\mathbb{R}^{d}$ with the product
measure $\lambda\times\rho$ and the usual Borel $\sigma$-algebra. We claim
that $\mathcal{E}$ has zero $\left(  \lambda\times\rho\right)  $-measure$.$
Indeed, being Borel, the set $\mathcal{E}$ is $\left(  \lambda\times
\rho\right)  $-measurable. For each $t\in\left(  0,1\right)  ,$ the slice
$\mathcal{E}_{t}=\left\{  x\in\mathbb{R}^{d}:\left(  t,x\right)
\in\mathcal{E}\right\}  $ has zero Lebesgue measure (it is contained in the
set where at least two planes meet and dominate the remaining), hence zero
$\rho$-mass, i.e. $\rho\left(  \mathcal{E}_{t}\right)  =0.$ Therefore, by
Tonelli's theorem,
\[
\int_{\left(  0,1\right)  \times\mathbb{R}^{d}}1_{\mathcal{E}}\left(
t,x\right)  dtdx=\int_{0}^{1}\rho\left(  \mathcal{E}_{t}\right)  dt=0,
\]
showing that $\mathcal{E}$ has zero $\left(  \lambda\times\rho\right)
$-measure. For any $\left(  t,x\right)  $ outside $\mathcal{E}$, there is a
unique maximizing index for $u_{t}(x),$ thus at this $\left(  t,x\right)  $,
$u_{t}(x)$ equals a single linear function hence $\partial_{t}u_{t}(x)$ is
well-defined. In fact $\left(  \ref{eq:ubar-quotient-bound}\right)  $ already
shows that $\bar{u}_{t}(x)$ is locally Lipschitz on $\left(  0,1\right)  ,$
thus $u_{t}(x)=\bar{u}_{t}(x)-a\left(  t\right)  $ is absolutely continuous
w.r.t. $t$ since $a\left(  t\right)  \in C^{1}\left(  \left(  0,1\right)
\right)  .$ Combining these with the just proved fact that $\mathcal{E}$ has
zero $\left(  \lambda\times\rho\right)  $-measure, we see that $\left(
t,x\right)  \mapsto\partial_{t}u_{t}(x)$ is $\left(  \lambda\times\rho\right)
$-measurable. By Fubini's theorem we also have a $\rho$-null set $\mathcal{N}$
such that $\lambda\left(  \mathcal{E}_{x}\right)  =0$ whenever $x\notin
\mathcal{N}$, where $\mathcal{E}_{x}=\left\{  t\in\left(  0,1\right)  :\left(
t,x\right)  \in\mathcal{E}\right\}  .$ This says $\partial_{t}u_{t}\left(
x\right)  $ is well-defined for all $x\notin\mathcal{N}$ and almost all
$t\in\left(  0,1\right)  .$ The point is that $\mathcal{N}$ is independent of
$t.$ This therefore justifies the integration $\int\left\vert \int_{s}
^{t}\partial_{\tau}u_{\tau}\left(  x\right)  d\tau\right\vert ^{2}d\rho$ for
$0<s<t<1.$
\end{remark}

The following is the proof of the main result.

\begin{proof}
[Proof of Theorem~\ref{thm:strong-main}]\textbf{1}. Fix the source in $\left(
\ref{eq:strong-source}\right)  $. We first construct the potential for an
arbitrary $\mu\in\mathcal{P}_{2}(\mathbb{R}^{d})$. For each $n\in\mathbb{N}$,
write $y=\left(  y_{1},\ldots,y_{d}\right)  $ and define the map
\[
U_{n}\left(  y\right)  =\left\{
\begin{array}
[c]{ll}
n^{-1}\left(  \left\lfloor ny_{1}\right\rfloor ,\ldots,\left\lfloor
ny_{d}\right\rfloor \right)  , & \left\vert y\right\vert \leqslant n,\\
0, & \left\vert y\right\vert >n.
\end{array}
\right.
\]
Let $\mu_{n}=(U_{n})_{\#}\mu.$ For $\left\vert y\right\vert \leqslant n,$
\[
\left\lfloor ny_{i}\right\rfloor \leqslant ny_{i}\leqslant\left\lfloor
ny_{i}\right\rfloor +1,\text{ }\forall i.
\]
Hence $\left\vert n^{-1}\left\lfloor ny_{i}\right\rfloor -y_{i}\right\vert
\leqslant1/n.$ The coupling $\left(  \operatorname*{Id},U_{n}\right)  _{\#}
\mu$ therefore yields
\begin{equation}
W_{2}\left(  \mu_{n},\mu\right)  ^{2}\leqslant\int\left\vert U_{n}\left(
y\right)  -y\right\vert ^{2}d\mu\left(  y\right)  \leqslant\frac{d}{n^{2}
}+\int_{\left\{  \left\vert y\right\vert >n\right\}  }\left\vert y\right\vert
^{2}d\mu\left(  y\right)  \rightarrow0. \label{eq:quantization}
\end{equation}
Let $u_{n}$ be the finite normalized potentials from
Proposition~\ref{prop:finite-strong}. The triangle inequality implies, as $m,$
$n\rightarrow\infty,$
\[
\left\Vert u_{n}-u_{m}\right\Vert _{L^{2}\left(  \rho\right)  }\leqslant
\sqrt{2/\kappa}\left\vert W_{2}\left(  \mu_{n},\mu\right)  +W_{2}\left(
\mu_{m},\mu\right)  \right\vert \rightarrow0.
\]
So $\left\{  u_{n}\right\}  $ is Cauchy in $L^{2}\left(  \rho\right)  $ and
converges to some $u_{\infty}$ in $L^{2}(\rho)$. By H\"{o}lder's inequality,
as $m,$ $n\rightarrow\infty,$
\[
\left\vert \int u_{\infty}d\rho-\int u_{n}d\rho\right\vert \leqslant\left\Vert
u_{n}-u_{m}\right\Vert _{L^{2}\left(  \rho\right)  }\rightarrow0.
\]
Hence $\int u_{\infty}d\rho=0.$

\textbf{2}. We next show that $u_{\infty}$ is finite convex and gives the
optimal map $T_{\mu}$. For the Dirac target $\delta_{0},$ the normalized
potential is identically zero. Hence applying
Proposition~\ref{prop:finite-strong} to $\mu_{n}$ and $\delta_{0}\ $gives
\[
\left\Vert u_{n}\right\Vert _{L^{2}\left(  \rho\right)  }\leqslant
\sqrt{2/\kappa}W_{2}\left(  \mu_{n},\delta_{0}\right)  \leqslant W_{2}\left(
\mu_{n},\mu\right)  +W_{2}\left(  \mu,\delta_{0}\right)  <\infty.
\]
This together with $\left(  \ref{eq:quantization}\right)  $ shows that
\[
K\triangleq\sup_{n}\left\Vert u_{n}\right\Vert _{L^{2}\left(  \rho\right)
}<\infty.
\]
For any $R>0$, let $f_{R}=\min_{\overline{B}\left(  0,R\right)  }f>0$. Then
\[
\int_{B\left(  0,R\right)  }\left\vert u_{n}\right\vert dx\leqslant\frac
{1}{f_{R}}\int\left\vert u_{n}\right\vert d\rho\leqslant f_{R}^{-1}\left\Vert
u_{n}\right\Vert _{L^{2}\left(  \rho\right)  }\leqslant\frac{K}{f_{R}}.
\]
By Lemma~\ref{lem:convex-compactness}, up to a subsequence, $u_{n}$ converges
locally uniformly to some finite convex function $u_{\mu}$. Hence we have
\[
\int_{B\left(  0,R\right)  }\left\vert u_{n}-u_{\mu}\right\vert f\left(
x\right)  dx\rightarrow0,
\]
and
\[
\int_{B\left(  0,R\right)  }\left\vert u_{n}-u_{\infty}\right\vert f\left(
x\right)  dx\leqslant\rho\left(  B\left(  0,R\right)  \right)  ^{1/2}\left(
\int_{B\left(  0,R\right)  }\left\vert u_{n}-u_{\infty}\right\vert
^{2}f\left(  x\right)  dx\right)  ^{1/2}\rightarrow0.
\]
It follows that $u_{\mu}$ equals $u_{\infty}$ almost everywhere in $B\left(
0,R\right)  .$ Since $R>0$ is arbitrary, $u_{\mu}$ equals $u_{\infty}$ almost
everywhere on $\mathbb{R}^{d}.$ But both $u_{\mu}$, $u_{\infty}$ are
continuous, so they equal everywhere. Thus $u_{\mu}\in L^{2}\left(
\rho\right)  $ and $\int u_{\mu}d\rho=0$.

\textbf{3}. By Lemma~\ref{lem:convex-compactness}, $\nabla u_{n}$ converges to
$\nabla u_{\mu}$ for almost all points, thus $\rho$-almost everywhere. By the
metric property of Wasserstein distance \cite[Theorem 7.12]{villani2003topics}
, $\left(  \ref{eq:quantization}\right)  $ implies that $\mu_{n}\rightarrow
\mu$ weakly. Therefore, for $a\in C_{b}\left(  \mathbb{R}^{d}\right)  $,
\begin{equation}
\int a\left(  \nabla u_{\mu}\right)  d\rho=\lim_{n\rightarrow\infty}\int
a\left(  \nabla u_{n}\right)  d\rho=\lim_{n\rightarrow\infty}\int ad\mu
_{n}=\int ad\mu. \label{eq:limit-gradient-law}
\end{equation}
In particular $\left(  \nabla u_{\mu}\right)  _{\#}\rho=\mu$ and
\[
\int|\nabla u_{\mu}|^{2}d\rho=\int|y|^{2}d\mu<\infty.
\]
Brenier's theorem \cite[Theorem 2.32]{villani2003topics} then implies that
$\nabla u_{\mu}=T_{\mu}$. The same lines as in Proposition
\ref{prop:finite-strong} yields the uniqueness of such potential $u_{\mu}.$
The uniqueness also implies the whole sequence $u_{n},$ not just a
subsequence, converges locally uniformly and in $L^{2}\left(  \rho\right)  .$
Apply the same construction to $\nu$, with finite approximations $\nu_{n}$ and
potentials $v_{n}$ converging to some finite convex $u_{\nu}$ in $L^{2}(\rho
)$. Then
\[
\Vert u_{\mu}-u_{\nu}\Vert_{L^{2}(\rho)}=\lim_{n\rightarrow\infty}\Vert
u_{n}-v_{n}\Vert_{L^{2}(\rho)}\leqslant\sqrt{2/\kappa}\lim_{n\rightarrow
\infty}W_{2}(\mu_{n},\nu_{n})=\sqrt{2/\kappa}W_{2}(\mu,\nu),
\]
where the last limit follows from
\[
\left\vert W_{2}\left(  \mu_{n},\nu_{n}\right)  -W_{2}\left(  \mu,\nu\right)
\right\vert \leqslant W_{2}\left(  \mu_{n},\mu\right)  +W_{2}\left(  \nu
_{n},\nu\right)  .
\]
This proves $\left(  \ref{eq:strong-potential}\right)  $ for all
$\mathcal{P}_{2}$ targets.

\textbf{4}. We next establish that $B_{f}<\infty$. First assume without loss
of generality that $x_{\ast}=0$ in $\left(  \ref{eq:Gaussian-envelope}\right)
$ and set $C=\mathcal{Z}^{-1}e^{-V(0)}$. For the $j$-th coordinate unit vector
$e_{j},$ let $z\perp e_{j}$, the finite strongly convex function $t\mapsto
V\left(  z+te_{j}\right)  $ tends to $+\infty$ as $t\rightarrow-\infty$ or
$t\rightarrow\infty,$ thus has a unique minimizer $t_{j}(z)$. Its almost
everywhere derivative changes sign only at $t_{j}\left(  z\right)  $. Hence,
for all $t,$
\[
\int_{\mathbb{R}}\left\vert \partial_{j}f\left(  z+te_{j}\right)  \right\vert
dt=\int_{-\infty}^{t_{j}\left(  z\right)  }\partial_{j}f\left(  z+te_{j}
\right)  dt-\int_{t_{j}\left(  z\right)  }^{\infty}\partial_{j}f\left(
z+te_{j}\right)  dt=2f\left(  z+t_{j}\left(  z\right)  e_{j}\right)
\leqslant2Ce^{-\kappa\left\vert z\right\vert ^{2}/2}.
\]
So Tonelli's theorem and that $\left\vert \nabla f\right\vert \leqslant
\sum_{j}\left\vert \partial_{j}f\right\vert $ yield
\begin{equation}
B_{f}=\int\left\vert \nabla f\right\vert dx\leqslant\sum_{j=1}^{d}
\int_{\mathbb{R}^{d-1}}2Ce^{-\kappa\left\vert z\right\vert ^{2}/2}
dz=2dC(2\pi/\kappa)^{(d-1)/2}<\infty. \label{eq:BV-strong}
\end{equation}
For $d=1$, the integral over $\mathbb{R}^{d-1}$ equals one. Since $f$ is
locally Lipschitz, its almost everywhere derivatives are its local weak
derivatives. Its weak gradient $\nabla f$ is also integrable by $\left(
\ref{eq:BV-strong}\right)  $ thus $f\in W^{1,1}$. So we identify $Df=\nabla
fdx$.

\textbf{5}. Finally, let both targets $\mu,\nu$ be supported in $\overline
{B}(b,L)$. Set $u=u_{\mu}-b\cdot x$, $v=u_{\nu}-b\cdot x$. The pushforward
laws by $u_{\mu},$ $u_{\nu}$ indicate that $\left\vert \nabla u\right\vert
,\left\vert \nabla v\right\vert \leqslant L$, a.e. Since $f>0,$ the gradient
bounds also holds $\rho$-a.e. Let $w\in\left\{  u,v\right\}  $, its
mollifications satisfy
\[
\left\vert \nabla\left(  w\ast\zeta_{\epsilon}\right)  \right\vert =\left\vert
\left(  \nabla w\right)  \ast\zeta_{\epsilon}\right\vert \leqslant\left\vert
\nabla w\right\vert \ast\zeta_{\epsilon}\leqslant L
\]
and
\[
\left\vert \left(  w\ast\zeta_{\epsilon}\right)  \left(  x\right)  -\left(
w\ast\zeta_{\epsilon}\right)  \left(  z\right)  \right\vert \leqslant
L\left\vert x-z\right\vert .
\]
As $\epsilon\rightarrow0,$ these shows that $w$ is $L$-Lipschitz. Let
\[
E_{\rho,2}(u,v)=\inf_{c\in\mathbb{R}}\Vert u-v-c\Vert_{L^{2}(\rho)}.
\]
Note that affine subtraction changes neither potential differences nor
gradient differences, $E_{\rho,2}(u,v)=E_{\rho,2}(u_{\mu},u_{\nu}).$ Als,
writing $z=u_{\mu}-u_{\nu}$ we have $\int zd\rho=0$, and
\[
\int\left\vert z-c\right\vert ^{2}d\rho=\int|z|^{2}d\rho+c^{2}.
\]
Hence $E_{\rho,2}(u_{\mu},u_{\nu})=\Vert z\Vert_{L^{2}(\rho)}.$ Applying
Theorem~\ref{thm:interpolation} with $q=2$ therefore gives
\[
\Delta_{\rho}\left(  \mu,\nu\right)  ^{2}\leqslant8L^{4/3}B_{f}^{2/3}
E_{\rho,2}(u_{\mu},u_{\nu})^{2/3}\leqslant8L^{4/3}B_{f}^{2/3}\left(
2/\kappa\right)  ^{1/3}W_{2}\left(  \mu,\nu\right)  ^{2/3}.
\]
Taking square roots gives the coefficient $\sqrt{8}(2/\kappa)^{1/6}=2\sqrt
{2}(2/\kappa)^{1/6}$ in $\left(  \ref{eq:strong-map}\right)  $. If $W_{2}=0$,
the potentials and maps coincide by uniqueness, and the estimate still holds.
\end{proof}

\section{Simultaneous sharpness with standard Gaussian source}

\label{sec:Gaussian-sharp}

For source $\rho$ and targets $\mu,\nu$, we write
\[
\Delta_{\rho}\left(  \mu,\nu\right)  ^{2}=\left\Vert T_{\mu}-T_{\nu
}\right\Vert _{L^{2}\left(  \rho\right)  }.
\]

\begin{proof}
[Proof of Theorem~\ref{thm:Gaussian-main}]\textbf{1}. The construct only
focuses on the first two coordinates, the other $d-2$ coordinates integrate to
$1$ under standard Gaussian. Denote the density and distribution function of
the standard Gaussian variable by
\[
\varphi(t)=\left(  2\pi\right)  ^{-1/2}e^{-t^{2}/2},\text{ }\Phi\left(
t\right)  =\int_{-\infty}^{t}\varphi\left(  s\right)  ds,
\]
Let
\[
F\left(  t\right)  =2\int_{0}^{t}\varphi\left(  s\right)  ds.
\]
For $t\geqslant0$, we have
\begin{equation}
0\leqslant F\left(  t\right)  \leqslant ct\text{ with }c=2\varphi\left(
0\right)  >0. \label{eq:cosntant-c}
\end{equation}
and
\begin{equation}
\frac{F\left(  t\right)  }{t}=2\int_{0}^{1}\varphi(ts)ds\rightarrow c\text{ as
}t\rightarrow0. \label{eq:Gaussian-F-limit}
\end{equation}
Let $Z$ be a one dimensional standard Gaussian variable. For fixed $h>0$, the
function
\begin{equation}
G\left(  a\right)  =\mathbb{E}F\left(  \left(  a+hZ\right)  _{+}\right)
\label{eq:G-definition}
\end{equation}
is continuous with limits $0,1$ at $-\infty,\infty$ respectively by the
dominated convergence theorem. If $a<b$, then $0<a+hZ<b+hZ$ on $\left\{
Z>-a/h\right\}  ,$ which has positive probability. Hence the strict inequality
holds
\[
G\left(  a\right)  <G\left(  b\right)  .
\]
Thus $G\left(  a\right)  $ is strictly increasing in $a$, and there is a
unique $a_{h}\in\mathbb{R}$ satisfying
\begin{equation}
G\left(  a_{h}\right)  =F\left(  h\right)  \triangleq m_{h}\in\left(
0,1\right)  . \label{eq:Gaussian-mass-equation}
\end{equation}

\textbf{2}. We now give a rough estimate about where $a_{h}$ locates. Define
\[
H\left(  A\right)  =\mathbb{E}\left[  \left(  A+Z\right)  _{+}\right]  ,\text{
}\forall A\in\mathbb{R}.
\]
Since $\varphi^{\prime}\left(  z\right)  =-z\varphi\left(  z\right)  ,$ we
have
\begin{equation}
H\left(  A\right)  =\int_{-A}^{\infty}\left(  A+z\right)  \varphi\left(
z\right)  dz=A\Phi\left(  A\right)  +\varphi\left(  A\right)  ,\nonumber
\end{equation}
hence
\[
H^{\prime}\left(  A\right)  =\Phi\left(  A\right)  >0.
\]
Moreover
\[
H\left(  0\right)  =\varphi\left(  0\right)  <1\text{ and }H\left(  1\right)
=\mathbb{E}\left(  1+Z\right)  +\mathbb{E}\left[  \left(  -1-Z\right)
_{+}\right]  >1.
\]
Therefore there is a unique $A_{\ast}\in\left(  0,1\right)  $ with
\begin{equation}
H\left(  A_{\ast}\right)  =\mathbb{E}\left(  A_{\ast}+Z\right)  _{+}=1.
\label{eq:Gaussian-A}
\end{equation}
For fixed $A$, $\lim_{h\rightarrow0}F\left(  h\left(  A+Z\right)  _{+}\right)
/h$ is bounded by $c(A+Z)_{+}$. The dominated convergence theorem yields
\begin{equation}
\lim_{h\rightarrow0}\frac{\mathbb{E}F\left(  h\left(  A+Z\right)  _{+}\right)
}{h}=cH\left(  A\right)  . \label{eq:Gaussian-scaled-mass}
\end{equation}
Evaluating the limit at $A=0$ and $A=1$ yield, as $h\rightarrow0,$
\[
\frac{G\left(  0\right)  }{h}=\frac{\mathbb{E}F\left(  hZ_{+}\right)  }
{h}\rightarrow cH\left(  0\right)  <c\text{ and }\frac{G\left(  h\right)  }
{h}=\frac{\mathbb{E}F\left(  h\left(  1+Z\right)  _{+}\right)  }{h}\rightarrow
cH\left(  1\right)  >c.
\]
While $\left(  \ref{eq:Gaussian-F-limit}\right)  $ and $\left(
\ref{eq:Gaussian-mass-equation}\right)  $ imply, as $h\rightarrow0,$
\[
\frac{G\left(  a_{h}\right)  }{h}=\frac{F\left(  h\right)  }{h}\rightarrow
c=cH\left(  A_{\ast}\right)  .
\]
Hence
\begin{equation}
\frac{m_{h}}{h}\rightarrow c\text{ as }h\rightarrow0\text{,}
\label{eq:Gaussian-mh-limit}
\end{equation}
and for small $h>0,$
\[
\frac{G\left(  0\right)  }{h}<\frac{m_{h}}{h}<\frac{G\left(  h\right)  }{h}.
\]
This indicates that $0<a_{h}<h$ for small $h>0$. Moreover, for $0<\epsilon
<\min\left(  A_{\ast},1-A_{\ast}\right)  $, since $H$ increases strictly, we
have
\[
cH\left(  A_{\ast}-\epsilon\right)  <c=cH\left(  A_{\ast}\right)  <cH\left(
A_{\ast}+\epsilon\right)  .
\]
Since $F\left(  h\right)  =G\left(  a_{h}\right)  =\mathbb{E}F(h\left(
a_{h}/h+Z\right)  _{+}),$ we get from $\left(  \ref{eq:Gaussian-F-limit}
\right)  $, $\left(  \ref{eq:Gaussian-scaled-mass}\right)  $ that, for small
$h>0$,
\[
\mathbb{E}F\left(  h\left(  A_{\ast}-\epsilon+Z\right)  _{+}\right)  <F\left(
h\right)  =\mathbb{E}F(h\left(  a_{h}/h+Z\right)  _{+})<\mathbb{E}F\left(
h\left(  A_{\ast}+\epsilon+Z\right)  _{+}\right)  .
\]
The same argument that was used to show the strict increase of $G\left(
\cdot\right)  $ also shows that the function $x\mapsto\mathbb{E}F(h\left(
x+Z\right)  _{+})$ increases strictly$,$ so $A_{\ast}-\epsilon<a_{h}
/h<A_{\ast}+\epsilon$. Consequently
\begin{equation}
\frac{a_{h}}{h}\rightarrow A_{\ast}\text{ as }h\rightarrow0.
\label{eq:Gaussian-A-limit}
\end{equation}

\textbf{3}. Define the finite convex functions
\begin{equation}
u_{h}\left(  x\right)  =\max\left\{  -x_{1},x_{1},h\right\}  ,\text{ }
v_{h}\left(  x\right)  =\max\left\{  -x_{1},x_{1},a_{h}+hx_{2}\right\}  .
\label{eq:Gaussian-planes}
\end{equation}
Let
\[
A_{h}=\left\{  \left\vert x_{1}\right\vert <h\right\}  ,\text{ }B_{h}=\left\{
\left\vert x_{1}\right\vert <\left(  a_{h}+hx_{2}\right)  _{+}\right\}  .
\]
By $\left(  \ref{eq:Gaussian-mass-equation}\right)  $ and the independence of
the Gaussian coordinates, we have
\[
\gamma_{d}\left(  A_{h}\right)  =F\left(  h\right)  =m_{h},\text{ }\gamma
_{d}\left(  B_{h}\right)  =\int F\left(  \left(  a_{h}+hz\right)  _{+}\right)
\varphi\left(  z\right)  \,dz=m_{h}.
\]
Let $e_{1},$ $e_{2}$ denote the unit vectors of the first two coordinates.
Thus the gradient push-forwards are
\[
\mu_{h}\triangleq\left(  \nabla u_{h}\right)  _{\#}\gamma_{d}=\frac{1-m_{h}
}{2}\left(  \delta_{-e_{1}}+\delta_{e_{1}}\right)  +m_{h}\delta_{0},
\]
\[
\nu_{h}\triangleq\left(  \nabla v_{h}\right)  _{\#}\gamma_{d}=\frac{1-m_{h}
}{2}\left(  \delta_{-e_{1}}+\delta_{e_{1}}\right)  +m_{h}\delta_{he_{2}}.
\]
For $0<h<1$ all supports lie in the closed unit ball. An explicit coupling can
be constructed
\[
\pi_{h}=\frac{1-m_{h}}{2}\left(  \delta_{\left(  -e_{1},-e_{1}\right)
}+\delta_{\left(  e_{1},e_{1}\right)  }\right)  +m_{h}\delta_{\left(
0,he_{2}\right)  }\in\Pi\left(  \mu_{h},\nu_{h}\right)  .
\]
Writing $y,$ $z$ for the marginal variables$.$ Clearly
\[
\int\left\vert y-z\right\vert ^{2}d\pi_{h}\left(  y,z\right)  =h^{2}m_{h}.
\]
On the other hand, for any $\pi\in\Pi\left(  \mu_{h},\nu_{h}\right)  $, its
first marginal has $y_{2}=0$, thus
\[
\int\left\vert y-z\right\vert ^{2}d\pi\left(  y,z\right)  \geqslant
\int\left\vert y_{2}-z_{2}\right\vert ^{2}d\pi\left(  y,z\right)
=\int\left\vert z_{2}\right\vert ^{2}d\nu_{h}\left(  z\right)  =h^{2}m_{h}.
\]
Hence the explicit coupling has the minimal cost,
\begin{equation}
W_{2}\left(  \mu_{h},\nu_{h}\right)  =hm_{h}^{1/2}. \label{eq:Gaussian-Wr}
\end{equation}
The optimal maps from $\gamma_{d}$ to $\mu_{h},\nu_{h}$ can be written out
explicitly
\[
T_{\mu_{h}}\left(  x\right)  =\operatorname{sgn}\left(  x_{1}\right)
e_{1}1_{A_{h}^{c}}\left(  x\right)  ,\text{ }T_{\nu_{h}}\left(  x\right)
=\operatorname{sgn}\left(  x_{1}\right)  e_{1}1_{B_{h}^{c}}\left(  x\right)
+he_{2}1_{B_{h}}\left(  x\right)  .
\]
The difference is
\[
T_{\mu_{h}}-T_{\nu_{h}}=\operatorname{sgn}\left(  x_{1}\right)  e_{1}\left(
1_{B_{h}}-1_{A_{h}}\right)  -he_{2}1_{B_{h}},
\]
Using that $e_{1}\cdot e_{2}=0,$ we have
\[
\left\vert T_{\mu_{h}}-T_{\nu_{h}}\right\vert ^{2}=1_{A_{h}\Delta B_{h}}
+h^{2}1_{B_{h}}.
\]
Thus using Tonelli's theorem,
\[
\int\left\vert T_{\mu_{h}}-T_{\nu_{h}}\right\vert ^{2}d\rho=\int_{\mathbb{R}
}\left[  \int_{\mathbb{R}}1_{A_{h}\Delta B_{h}}d\gamma_{1}\left(
x_{1}\right)  +h^{2}1_{B_{h}}\right]  d\gamma_{2}\left(  x_{2}\right)  .
\]
For the evaluation of double integral, fix $x_{2}=z$, the integration of
$x_{1}$ gives the mass $\left\vert F\left(  h\right)  -F\left(  \left(
a_{h}+hz\right)  _{+}\right)  \right\vert $ to $A_{h}\Delta B_{h}.$ Then
integrating $x_{2}$ yields
\begin{equation}
\Delta_{\gamma_{d}}\left(  \mu_{h},\nu_{h}\right)  ^{2}=\mathbb{E}\left\vert
F\left(  h\right)  -F\left(  \left(  a_{h}+hz\right)  _{+}\right)  \right\vert
+h^{2}m_{h}. \label{eq:Gaussian-map-exact}
\end{equation}

\textbf{4}. We now compute the order of $\Delta_{\gamma_{d}}\left(  \mu
_{h},\nu_{h}\right)  ^{2}$ as $h\rightarrow0.$ Let $Q=\left(  A_{\ast
}+Z\right)  _{+}$. By $\left(  \ref{eq:Gaussian-A-limit}\right)  ,$ for
sufficiently small $h$, $0<a_{h}/h<1$, and
\[
0\leqslant\frac{\left\vert F\left(  h\right)  -F\left(  h\left(
a_{h}/h+hz\right)  _{+}\right)  \right\vert }{h}\leqslant c\left[  1+\left(
a_{h}/h+Z\right)  _{+}\right]  \leqslant c\left(  2+\left\vert Z\right\vert
\right)  .
\]
By $\left(  \ref{eq:Gaussian-F-limit}\right)  ,$ we have, if $\left(
a_{h}/h+z\right)  _{+}>0,$
\[
\frac{F\left(  h\right)  -F\left(  \left(  a_{h}+hz\right)  _{+}\right)  }
{h}=\frac{F\left(  h\right)  }{h}-\frac{F\left(  h\left(  a_{h}/h+z\right)
_{+}\right)  }{h\left(  a_{h}/h+z\right)  _{+}}\left(  a_{h}/h+z\right)
_{+}\rightarrow c\left(  1-\left(  A_{\ast}+z\right)  _{+}\right)  .
\]
The limit clearly still holds even if $\left(  a_{h}/h+z\right)  _{+}=0$. Also
$h^{2}m_{h}/h=hm_{h}\rightarrow0$. Hence dividing $\left(
\ref{eq:Gaussian-map-exact}\right)  $ by $h$ and taking limit $h\rightarrow0,$
we obtain that
\begin{equation}
\frac{\Delta_{\gamma_{d}}\left(  \mu_{h},\nu_{h}\right)  ^{2}}{h}\rightarrow
c\mathbb{E}\left\vert 1-Q\right\vert \triangleq c_{m}\text{.}
\label{eq:Gaussian-map-limit}
\end{equation}
The constant $c_{m}$ is strictly positive$.$ Indeed, $c>0$ by $\left(
\ref{eq:cosntant-c}\right)  ,$ the event $\left\{  Z<-A_{\ast}\right\}  $ has
strictly positive probability and on it $Q=0$. So
\[
c_{m}=c\mathbb{E}\left\vert 1-Q\right\vert \geqslant c\mathbb{E}\left[
\left\vert 1-Q\right\vert 1_{\left\{  Z<-A_{\ast}\right\}  }\right]  =c>0.
\]

\textbf{5}. We next compute the order of $\left\Vert u_{\mu_{h}}-u_{\nu_{h}
}\right\Vert _{L^{2}\left(  \gamma_{d}\right)  }^{2}$ and its normalized
version w.r.t. $h\rightarrow0.$ Define
\[
D_{A}\left(  s,z\right)  =\max\left\{  \left\vert s\right\vert ,1\right\}
-\max\left\{  \left\vert s\right\vert ,A+z\right\}  .
\]
Then $\left(  u_{h}-v_{h}\right)  \left(  hs,z\right)  =hD_{a_{h}/h}\left(
s,z\right)  $. Substituting $x_{1}=hs$ yields
\begin{equation}
\int\left\vert u_{h}-v_{h}\right\vert ^{2}d\gamma_{d}=h^{3}\int_{\mathbb{R}
^{2}}D_{a_{h}/h}\left(  s,z\right)  ^{2}\varphi\left(  hs\right)
\varphi\left(  z\right)  dsdz. \label{eq:Gaussian-potential-exact}
\end{equation}
For $0\leqslant A\leqslant1$, $\left\vert D_{A}(s,z)\right\vert \leqslant
2+\left\vert z\right\vert $. Also $D_{A}(s,z)=0$ if $\left\vert s\right\vert
>1+\left\vert z\right\vert $. Thus the integrand of $\left(
\ref{eq:Gaussian-potential-exact}\right)  $\ has
\[
D_{a_{h}/h}\left(  s,z\right)  ^{2}\varphi\left(  hs\right)  \varphi\left(
z\right)  \leqslant\varphi\left(  0\right)  \left(  2+\left\vert z\right\vert
\right)  ^{2}1_{\left\{  \left\vert s\right\vert \leqslant1+\left\vert
z\right\vert \right\}  }\varphi\left(  z\right)  .
\]
The upper bound is $dsdz$-integrable with integral at most $2\varphi\left(
0\right)  \mathbb{E}\left[  \left(  1+\left\vert Z\right\vert \right)  \left(
2+\left\vert Z\right\vert \right)  ^{2}\right]  <\infty$. By $\left(
\ref{eq:Gaussian-A-limit}\right)  $ and the dominated convergence theorem, as
$h\rightarrow0,$
\begin{equation}
\frac{\int\left\vert u_{h}-v_{h}\right\vert ^{2}d\gamma_{d}}{h^{3}}
\rightarrow\varphi\left(  0\right)  \int_{\mathbb{R}^{2}}D_{A_{\ast}}\left(
s,z\right)  ^{2}\varphi\left(  z\right)  dsdz\triangleq c_{p}
\label{eq:Gaussian-potential-limit}
\end{equation}
We show that $c_{p}>0.$ Since $\varphi\left(  0\right)  >0,$ it suffices to
look at the integral
\[
\int_{\mathbb{R}^{2}}D_{A_{\ast}}\left(  s,z\right)  ^{2}\varphi\left(
z\right)  dsdz=\int_{\mathbb{R}}\left[  \int_{\mathbb{R}}D_{A_{\ast}}\left(
s,z\right)  ^{2}ds\right]  \varphi\left(  z\right)  dz.
\]
If $A_{\ast}+z\leqslant1,$ $s\geqslant0$, we explicitly have
\[
D_{A_{\ast}}\left(  s,z\right)  =\max\left\{  s,1\right\}  -\max\left\{
s,A_{\ast}+z\right\}  =\left\{
\begin{array}
[c]{ll}
1-\left(  A_{\ast}+z\right)  , & 0\leqslant s\leqslant A_{\ast}+z,\\
1-s, & A_{\ast}+z<s\leqslant1,\\
0, & s>1.
\end{array}
\right.  .
\]
Hence
\begin{align*}
\int_{\mathbb{R}^{2}}D_{A_{\ast}}\left(  s,z\right)  ^{2}\varphi\left(
z\right)  dsdz  &  \geqslant\int\left[  \int_{0}^{\infty}D_{A_{\ast}}\left(
s,z\right)  ^{2}ds\right]  1_{\left\{  z\leqslant1-A_{\ast}\right\}  }
\varphi\left(  z\right)  dz\\
&  \geqslant\int\left[  \int_{A_{\ast}+z}^{1}\left(  1-s\right)
^{2}ds\right]  1_{\left\{  z\leqslant-A_{\ast}\right\}  }\varphi\left(
z\right)  dz>0.
\end{align*}
Thus $c_{p}>0.$ Following the same computation as the squared case
$\int\left\vert u_{h}-v_{h}\right\vert ^{2}d\gamma_{d}$, we obtain
\[
\left\vert \int\left(  u_{h}-v_{h}\right)  d\gamma_{d}\right\vert
=h^{2}\left\vert \int G_{a_{h}/h}\left(  s,z\right)  \varphi\left(  hs\right)
\varphi\left(  z\right)  dsdz\right\vert \leqslant2\varphi\left(  0\right)
h^{2}\mathbb{E}\left[  \left(  1+\left\vert Z\right\vert \right)  \left(
2+\left\vert Z\right\vert \right)  \right]  =O\left(  h^{2}\right)  .
\]
Therefore
\begin{align*}
\left\Vert \left(  u_{\mu_{h}}-u_{\nu_{h}}\right)  -\int\left(  u_{h}
-v_{h}\right)  d\gamma_{d}\right\Vert _{L^{2}\left(  \gamma_{d}\right)  }^{2}
&  =\int\left\vert u_{h}-v_{h}\right\vert ^{2}d\gamma_{d}-\left(  \int\left(
u_{h}-v_{h}\right)  d\gamma_{d}\right)  ^{2}\\
&  =h^{3}c_{p}+o\left(  h^{3}\right)  -O\left(  h^{4}\right)  =h^{3}
c_{p}+o\left(  h^{3}\right)  .
\end{align*}
This indicates that normalizing $u_{h},$ $v_{h}$ to have zero means has the
same order as $\left\Vert u_{\mu_{h}}-u_{\nu_{h}}\right\Vert _{L^{2}\left(
\gamma_{d}\right)  }^{2}$ w.r.t. $h\rightarrow0.$

\textbf{6}. Finally we collect all asymptotic quantities computed so far.
Specifically, we have
\[
\begin{array}
[c]{rll}
\left(  \ref{eq:Gaussian-mh-limit}\right)  &  & m_{h}/h\rightarrow c\\
\left(  \ref{eq:Gaussian-A-limit}\right)  &  & a_{h}/h\rightarrow A_{\ast}\\
\left(  \ref{eq:Gaussian-Wr}\right)  &  & W_{2}\left(  \mu_{h},\nu_{h}\right)
=hm_{h}^{1/2}\\
\left(  \ref{eq:Gaussian-map-limit}\right)  &  & \Delta_{\gamma_{d}}\left(
\mu_{h},\nu_{h}\right)  ^{2}=hc_{m}+o\left(  h\right) \\
\left(  \ref{eq:Gaussian-potential-limit}\right)  &  & \left\Vert u_{\mu_{h}
}-u_{\nu_{h}}\right\Vert _{L^{2}\left(  \gamma_{d}\right)  }^{2}=h^{3}
c_{p}+o\left(  h^{3}\right)
\end{array}
\]
The last row remains identical for potential that are normalized to have zero
means. All constants $c,A_{\ast},c_{m},c_{p}$ are strictly positive. Therefore
the desired asymptotics are proved with with $\kappa_{0}=\sqrt{c}$,
$\kappa_{1}=\sqrt{c_{m}}$, $\kappa_{2}=\sqrt{c_{p}}$. The constructed family
also gives linear potential stability
\[
\frac{\left\Vert u_{\mu_{h}}-u_{\nu_{h}}\right\Vert _{L^{2}\left(  \gamma
_{d}\right)  }}{W_{2}\left(  \mu_{h},\nu_{h}\right)  }=\frac{h^{3/2}
c_{p}^{1/2}+o\left(  h^{3/2}\right)  }{h^{3/2}\left(  m_{h}/h\right)  ^{1/2}
}\sim\frac{c_{p}^{1/2}}{c^{1/2}}\text{ as }h\rightarrow0
\]
and realizes the $1/3$ power conversion from potential stability to map
stability,
\[
\frac{\Delta_{\gamma_{d}}\left(  \mu_{h},\nu_{h}\right)  }{\left\Vert
u_{\mu_{h}}-u_{\nu_{h}}\right\Vert _{L^{2}\left(  \gamma_{d}\right)  }^{1/3}
}=\frac{h^{1/2}c_{m}^{1/2}+o\left(  h^{1/2}\right)  }{h^{1/2}c_{p}
^{1/6}+o\left(  h^{1/2}\right)  }\sim\frac{c_{m}^{1/2}}{c_{p}^{1/6}}\text{ as
}h\rightarrow0.
\]
Moreover, for any $\alpha>1/3$,
\[
\frac{\Delta_{\gamma_{d}}\left(  \mu_{h},\nu_{h}\right)  }{W_{2}\left(
\mu_{h},\nu_{h}\right)  ^{\alpha}}=\frac{\left(  hc_{m}+o\left(  h\right)
\right)  ^{1/2}}{h^{\alpha}m_{h}^{\alpha/2}}=\frac{h^{1/2}c_{m}^{1/2}+o\left(
h^{1/2}\right)  }{h^{3\alpha/2}\left(  m_{h}/h\right)  ^{\alpha/2}}\sim
\frac{c_{m}^{1/2}}{c^{\alpha/2}}h^{\left(  1-3\alpha\right)  /2}
\rightarrow+\infty\text{ as }h\rightarrow0.
\]

\end{proof}

\begin{remark}
The exponents in Theorem \ref{thm:strong-main} are sharp, but its coefficient
might not be. Indeed, for the centered Gaussian with covariance $\kappa^{-1}I$
and Dirac targets $\delta_{a},\delta_{b}$, the normalized potentials are
$u_{\delta_{a}}(x)=a\cdot x$ and $u_{\delta_{b}}(x)=b\cdot x$. Therefore
\[
\left\Vert u_{\delta_{a}}-u_{\delta_{b}}\right\Vert _{L^{2}\left(
\rho\right)  }=\kappa^{-1/2}\left\vert a-b\right\vert =\kappa^{-1/2}
W_{2}\left(  \delta_{a},\delta_{b}\right)  .
\]
This specific coefficient does not have the $\sqrt{2}$ factor.
\end{remark}

\subsection{Gaussian dimension dependence}

For $d\geqslant2$, let $C_{d}$ denote the least constant in
\[
\left\Vert T_{\mu}-T_{\nu}\right\Vert _{L^{2}(\gamma_{d})}\leqslant C_{d}
W_{2}\left(  \mu,\nu\right)  ^{1/3}\text{ for all }\mu,\nu\in\mathcal{P}
\left(  \overline{B}\left(  0,1\right)  \right)  .
\]
Here $\mathcal{P}\left(  \overline{B}\left(  0,1\right)  \right)  $ denotes
the set of probabilities supported in $\overline{B}\left(  0,1\right)  $. The
estimate $\left(  \ref{eq:Gaussian-upper}\right)  $ implies
\[
C_{d}\leqslant2^{5/3}d^{1/6}.
\]
On the other hand, \cite[Proposition 5]{wang2026quantitative} gives an
dimension lower bound: there are universal constants $c_{\ast},C_{\ast}>0$
such that, for all sufficiently large $d$, there are targets $\mu_{d},\nu_{d}$
in the unit ball satisfying
\[
\left\Vert T_{\mu_{d}}-T_{\nu_{d}}\right\Vert _{L^{2}(\gamma_{d})}\geqslant
c_{\ast}\text{ and }W_{2}\left(  \mu_{d},\nu_{d}\right)  \leqslant C_{\ast
}d^{-1/2}.
\]
These targets satisfy exactly the source and support conditions in the
definition of $C_{d}$. Their distance $W_{2}\left(  \mu_{d},\nu_{d}\right)  $
is positive by uniqueness of the Brenier map. Substitution yields
\[
c_{\ast}\leqslant C_{d}(C_{\ast}d^{-1/2})^{1/3}.
\]
It follows that
\[
c_{\ast}C_{\ast}^{-1/3}d^{1/6}\leqslant C_{d}\leqslant2^{5/3}d^{1/6}.
\]
Thus the power $d^{1/6}$ is optimal for Theorem \ref{thm:strong-main}.

For the standard Gaussian density , the constant can be derived explicitly.
The density and its gradient are
\[
f_{d}(x)=\left(  2\pi\right)  ^{-d/2}e^{-\left\vert x\right\vert ^{2}
/2},\text{ }\nabla f_{d}\left(  x\right)  =-xf_{d}\left(  x\right)  .
\]
Its source potential $V(x)=\left\vert x\right\vert ^{2}/2$ satisfies $\left(
\ref{eq:strong-source}\right)  $ with $\kappa=1$. For the one-dimensional
density $\varphi\left(  t\right)  =\left(  2\pi\right)  ^{-1/2}e^{-t^{2}/2}$,
integration by parts gives
\[
\int_{\mathbb{R}}t^{2}\varphi(t)dt=-\int_{\mathbb{R}}t\varphi^{\prime
}(t)dt=-[t\varphi\left(  t\right)  ]_{-\infty}^{\infty}+\int_{\mathbb{R}
}\varphi\left(  t\right)  dt=1.
\]
Consequently
\begin{align*}
B_{f_{d}}  &  =\int_{\mathbb{R}^{d}}\left\vert \nabla f_{d}\left(  x\right)
\right\vert dx=\int_{\mathbb{R}^{d}}\left\vert x\right\vert d\gamma
_{d}(x)\leqslant\left(  \int_{\mathbb{R}^{d}}\left\vert x\right\vert
^{2}d\gamma_{d}(x)\right)  ^{1/2}\left(  \int_{\mathbb{R}^{d}}d\gamma
_{d}\left(  x\right)  \right)  ^{1/2}\\
&  =\left(  \sum_{j=1}^{d}\int_{\mathbb{R}^{d}}x_{j}^{2}d\gamma_{d}(x)\right)
^{1/2}=\sqrt{d}.
\end{align*}
Since $f_{d}$ is smooth and its first derivatives are integrable, this also
proves $f_{d}\in W^{1,1}(\mathbb{R}^{d})$. If both targets are supported in
$\overline{B}(b,L)$, with $b\in\mathbb{R}^{d}$ and $L>0$, substitution in
$\left(  \ref{eq:strong-map}\right)  $ yields
\[
\left\Vert T_{\mu}-T_{\nu}\right\Vert _{L^{2}(\gamma_{d})}\leqslant2\sqrt
{2}2^{1/6}L^{2/3}B_{f_{d}}^{1/3}W_{2}\left(  \mu,\nu\right)  ^{1/3}
\leqslant2^{\,1+1/2+1/6}(\sqrt{d})^{1/3}L^{2/3}W_{2}\left(  \mu,\nu\right)
^{1/3}.
\]
Since $1+1/2+1/6=5/3$ and $(\sqrt{d})^{1/3}=d^{1/6}$, this is
\begin{equation}
\Delta_{\gamma_{d}}\left(  \mu,\nu\right)  \leqslant2^{5/3}d^{1/6}L^{2/3}
W_{2}\left(  \mu,\nu\right)  ^{1/3}. \label{eq:Gaussian-upper}
\end{equation}
In dimension one the monotone coupling gives $\Delta_{\rho}=W_{2}$ (see
\cite[Theorem 2.17]{santambrogio2015optimal}\cite[Remark 2.19]
{villani2003topics}), so the sharpness claim is restricted to $d\geqslant2$.

\appendix

\section{Potential interpolation with bounded variation source}

\label{sec:BV}

This section gives the conversion from potential stability to Brenier map stability.

\begin{lemma}
\label{lem:BV}Let $f\geqslant0$ belong to $BV\left(  \mathbb{R}^{d}\right)  ,$
and let $L\geqslant0$ and $u$ be convex and $L$-Lipschitz on $\mathbb{R}^{d}$.
Let $B_{f}=\left\vert Df\right\vert \left(  \mathbb{R}^{d}\right)  $ and $e$
be a unit vector. Whenever $u$ is differentiable at $x,$ we define for $t>0$,
\[
R_{u}\left(  x,te\right)  =u\left(  x+te\right)  -u\left(  x\right)  -t\nabla
u\left(  x\right)  \cdot e.
\]
Then $R_{u}\left(  x,te\right)  \geqslant0$ and
\begin{equation}
\int R_{u}\left(  x,te\right)  f\left(  x\right)  dx\leqslant\frac{1}{2}
LB_{f}t^{2}. \label{eq:remainder}
\end{equation}
If $u$ is smooth, then for $z\in\mathbb{R}^{d}$,
\begin{equation}
\int f(x)\Delta u\left(  x+z\right)  dx=-\int\nabla u\left(  x+z\right)  \cdot
dDf(x)\leqslant LB_{f}. \label{eq:laplacian}
\end{equation}

\end{lemma}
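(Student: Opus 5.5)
Nonnegativity of $R_u(x,te)$ is immediate from convexity: at a point of differentiability the tangent plane lies below $u$, so $u(x+te)\geqslant u(x)+t\nabla u(x)\cdot e$. For the integral bound \eqref{eq:remainder} the plan is to work first with smooth $u$ and smooth, compactly supported $f$, and then approximate.

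For smooth $u$, Taylor's formula with integral remainder gives
\[
R_u(x,te)=\int_0^t (t-s)\,\partial_{ee}u(x+se)\,ds .
\]
Integrate against $f$ and use Tonelli, which is legitimate because $\partial_{ee}u\geqslant0$:
\[
\int R_u f\,dx=\int_0^t (t-s)\int f(x)\,\partial_{ee}u(x+se)\,dx\,ds .
\]
In the inner integral move one derivative onto $f$:
\[
\int f\,\partial_e(\partial_e u)(\cdot+se)\,dx=-\int \partial_e u(x+se)\,d(D_e f)(x)\leqslant L\,|Df|(\mathbb{R}^d),
\]
since $|\partial_e u|\leqslant L$. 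Integrating $(t-s)$ over $[0,t]$ then gives $t^2/2$, which is \eqref{eq:remainder}.

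The Laplacian identity \eqref{eq:laplacian} is the same computation with $\Delta u=\operatorname{div}\nabla u$ in place of $\partial_{ee}u$. Integrating by parts against the vector measure $Df$ gives
\[
\int f\,\Delta u(\cdot+z)\,dx=-\int \nabla u(x+z)\cdot dDf(x),
\]
and this is at most $L\,|Df|(\mathbb{R}^d)$ because $|\nabla u|\leqslant L$.

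The main obstacle is justifying these integrations by parts when $f$ is a general $BV$ function and $u$ is unbounded (it grows linearly). I would handle this with a cutoff $\chi_R$ equal to $1$ on $B(0,R)$ and with $|\nabla\chi_R|\leqslant 1/R$. Since $u$ is smooth, $\partial_e u\,\chi_R$ is a $C^1_c$ test function, so the definition of $Df$ applies directly:
\[
\int f\,\partial_e(\partial_e u\,\chi_R)\,dx=-\int \partial_e u\,\chi_R\,dD_ef .
\]
The extra term $\int f\,\partial_e u\,\partial_e\chi_R$ is bounded by $(L/R)\|f\|_{L^1}$ and tends to $0$. On the left, $\partial_{ee}u\,\chi_R\, f\uparrow \partial_{ee}u\, f$ by monotone convergence, since $\partial_{ee}u\geqslant0$; on the right, dominated convergence with respect to $|Df|$ applies.

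For general convex $L$-Lipschitz $u$, apply the smooth case to $u_\epsilon=u*\zeta_\epsilon$, which is convex and $L$-Lipschitz. Then $\nabla u_\epsilon\to\nabla u$ at every differentiability point of $u$, so $R_{u_\epsilon}(x,te)\to R_u(x,te)$ almost everywhere. Fatou's lemma, using nonnegativity, passes the bound to the limit. The $BV$ hypothesis on $f$ alone therefore suffices, and no smoothing of $f$ is needed.
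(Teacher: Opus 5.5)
Your proof is correct and follows essentially the same route as the paper: for smooth $u$, a cutoff $\chi_R$ with the defining integration by parts against $Df$, then Taylor's formula with integral remainder plus Tonelli, and finally mollification of $u$ alone. The differences are cosmetic. You integrate by parts directly with $\partial_{ee}u$ against $D_ef$, whereas the paper first proves the Laplacian bound and then uses $0\leqslant\partial_{ee}u\leqslant\Delta u$; and you use monotone convergence and Fatou where the paper uses Fatou and dominated convergence.
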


\begin{proof}
\textbf{1}. First assume that $u$ is smooth. Let $\chi_{R}\in C_{c}^{\infty
}(\mathbb{R}^{d})$ equal one on $B\left(  0,R\right)  $, vanish outside
$B\left(  0,2R\right)  $, and satisfy $0\leqslant\chi_{R}\leqslant1$,
$\left\vert \nabla\chi_{R}\right\vert \leqslant C/R$ for some constant
independent of $R$. Such function exists, e.g. we may take the smooth function
$H$ from \cite[Lemma 2.22]{lee2013smooth} and let $\chi_{R}\left(  x\right)
=H\left(  \left\vert x\right\vert /R\right)  .$ For fixed $z,$ by integration
by parts (see \cite[Section 3.1]{ambrosio2000functions}) we have
\begin{equation}
\int\chi_{R}f\Delta u\left(  \left(  x+z\right)  \right)  dx=-\int\chi
_{R}\nabla u\left(  x+z\right)  \cdot dDf\left(  x\right)  -\int f\nabla
\chi_{R}\cdot\nabla u\left(  \,x+z\right)  dx. \label{eq:integration-by-parts}
\end{equation}
The second term on the right side converges to zero, because
\begin{equation}
\left\vert \int f\nabla\chi_{R}\cdot\nabla u\left(  x+z\right)  dx\right\vert
\leqslant\frac{CL}{R}\left\Vert f\right\Vert _{1}. \label{eq:cutoff}
\end{equation}
The integrand in the first term converges pointwise to $\nabla u\left(
x+z\right)  $ and is bounded by $L,$ so the dominated convergence theorem
yields
\[
\lim_{R\rightarrow\infty}\int\chi_{R}\nabla u\left(  x+z\right)  \cdot
dDf\left(  x\right)  =\int\nabla u\left(  x+z\right)  \cdot dDf\left(
x\right)  .
\]
Since $u$ is smooth convex, $\Delta u\geqslant0$. Thus using Fatou's lemma$,$
we obtain
\[
0\leqslant\int f\Delta u\left(  \left(  x+z\right)  \right)  dx\leqslant
\lim_{R\rightarrow\infty}\int\chi_{R}f\Delta u\left(  \left(  x+z\right)
\right)  dx=-\int\nabla u\left(  x+z\right)  \cdot dDf\left(  x\right)
\leqslant LB_{f}.
\]
This also shows that $f\Delta u\left(  \left(  x+z\right)  \right)  $ is
integrable. Hence, using the dominated convergence theorem in $\left(
\ref{eq:integration-by-parts}\right)  $ again proves $\left(
\ref{eq:laplacian}\right)  $. \ For the directional derivative $\partial
_{ee}u$, $0\leqslant\partial_{ee}u\leqslant\Delta u$. Using Taylor's formula
with integral remainer and Tonelli's theorem, we have
\[
\int R_{u}\left(  x,te\right)  f\left(  x\right)  dx=\int_{0}^{t}\left(
t-s\right)  \int f(x)\partial_{ee}u\left(  x+se\right)  dxds\leqslant
LB_{f}\int_{0}^{t}\left(  t-s\right)  ds=\frac{1}{2}LB_{f}t^{2}.
\]
The inequality uses $\left(  \ref{eq:laplacian}\right)  $.

\textbf{2}. \ For general convex $u$, fix a nonnegative $\zeta\in
C_{c}^{\infty}\left(  B\left(  0,1\right)  \right)  $ with $\int\zeta=1$,
$\epsilon>0,$ $\zeta_{\epsilon}(z)=\epsilon^{-d}\zeta\left(  z/\epsilon
\right)  $ and let $u_{\epsilon}=u\ast\zeta_{\epsilon}$. Thus $\zeta
_{\epsilon}$ is a smooth mollifier supported in $B(0,\epsilon)$. All
convolutions are finite because $u$ is locally bounded. For $0\leqslant
\lambda\leqslant1$,
\[
u_{\epsilon}\left(  \lambda x+\left(  1-\lambda\right)  y\right)  =\int
u\left(  \lambda\left(  x-z\right)  +\left(  1-\lambda\right)  \left(
y-z\right)  \right)  \zeta_{\epsilon}\left(  z\right)  dz\leqslant\lambda
u_{\epsilon}\left(  x\right)  +(1-\lambda)u_{\epsilon}\left(  y\right)  .
\]
So the smoothed function $u_{\epsilon}$ is still convex. Moreover
\[
\left\vert u_{\epsilon}\left(  x\right)  -u_{\epsilon}\left(  y\right)
\right\vert \leqslant\int L\left\vert x-y\right\vert \zeta_{\epsilon}\left(
z\right)  dz=L\left\vert x-y\right\vert ,
\]
\[
\left\vert u_{\epsilon}\left(  x\right)  -u\left(  x\right)  \right\vert
\leqslant\int L\left\vert z\right\vert \zeta_{\epsilon}\left(  z\right)
dz\leqslant L\epsilon.
\]
Thus $u_{\epsilon}$ is $L$-Lipschitz with a uniform approximation error. For
almost all $x,$ $\nabla u_{\epsilon}=\left(  \nabla u\right)  \ast
\zeta_{\epsilon}$ exists and $\nabla u_{\epsilon}\left(  x\right)
\rightarrow\nabla u\left(  x\right)  $ (see \cite[C.5 Theorem 7 (iv)]
{evans2022partial}). Therefore for each such $x$, fixed $t>0$ and any unit
vector $e$, we have
\begin{align*}
\left\vert R_{u_{\epsilon}}\left(  x,te\right)  -R_{u}(x,te)\right\vert  &
\leqslant\left\vert u_{\epsilon}(x+te)-u(x+te)\right\vert +\left\vert
u_{\epsilon}\left(  x\right)  -u\left(  x\right)  \right\vert +t\left\vert
\nabla u_{\epsilon}\left(  x\right)  -\nabla u\left(  x\right)  \right\vert \\
&  \leqslant2L\epsilon+t\left\vert \nabla u_{\epsilon}\left(  x\right)
-\nabla u\left(  x\right)  \right\vert \rightarrow0.
\end{align*}
The potential approximation is uniform at $x+te$. Using convexity and the
Lipschitz bound we get
\[
0\leqslant R_{u_{\epsilon}}\left(  x,te\right)  \leqslant\left\vert
u_{\epsilon}\left(  x+te\right)  -u_{\epsilon}\left(  x\right)  \right\vert
+t\left\vert \nabla u_{\epsilon}\left(  x\right)  \right\vert \leqslant2Lt.
\]
Since $f\in L^{1}$, the dominated convergence theorem yields
\[
\int R_{u}\left(  x,te\right)  f\left(  x\right)  dx=\lim_{\epsilon
\rightarrow0}\int R_{u_{\epsilon}}\left(  x,te\right)  f\left(  x\right)
dx\leqslant\tfrac{1}{2}LB_{f}t^{2}.
\]
Therefore $\left(  \ref{eq:remainder}\right)  $ holds in the nonsmooth case.
\end{proof}

\begin{theorem}
[Potential to map interpolation]\label{thm:interpolation} Let $f\geqslant0$
belong to $BV(\mathbb{R}^{d})$, $\rho=fdx,$ $L\geqslant0$ and $u,v:\mathbb{R}
^{d}\rightarrow\mathbb{R}$ be convex and $L$-Lipschitz. Let $B_{f}=\left\vert
Df\right\vert \left(  \mathbb{R}^{d}\right)  $ and
\[
E_{f}=\int\left\vert \nabla u-\nabla v\right\vert ^{2}fdx.
\]
For $1\leqslant q<\infty$ and any $c$ for which $\int\left\vert
u-v-c\right\vert ^{q}fdx<\infty$,
\begin{equation}
E_{f}\leqslant8L^{\left(  q+2\right)  /\left(  q+1\right)  }B_{f}^{q/\left(
q+1\right)  }\left(  \int\left\vert u-v-c\right\vert ^{q}fdx\right)
^{1/\left(  q+1\right)  }. \label{eq:interpolation}
\end{equation}
For $q=\infty$,
\begin{equation}
E_{f}\leqslant4LB_{f}\inf_{c\in\mathbb{R}}\left\Vert u-v-c\right\Vert
_{L^{\infty}(\rho)} \label{eq:infinity}
\end{equation}
whenever that infimum is finite. If $f=0$ a.e. or $L=0$, then $E_{f}=0$.
\end{theorem}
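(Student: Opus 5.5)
The plan is to follow the standard compact-case interpolation of \cite[Proposition 4.1]{delalande2023quantitative}, with Lemma~\ref{lem:BV} replacing the bounded-domain arguments. Set $\phi=u-v-c$, $p=\nabla u-\nabla v$; both are defined a.e. (where $u,v$ are differentiable), and $|p|\leqslant 2L$. Fix $t>0$. For a.e. $x$ with $p(x)\neq0$ let $e(x)=p(x)/|p(x)|$. Convexity of $u$ gives
\[
u(x+te)-u(x)\geqslant t\nabla u(x)\cdot e .
\]
Together with the definition of $R_v$ this yields the pointwise bound
\[
t\,p(x)\cdot e\leqslant \phi(x+te)-\phi(x)+R_v(x,te).
\]
Since $e$ depends on $x$, Lemma~\ref{lem:BV} cannot be applied directly. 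I would first discretize directions: pick a finite family of unit vectors $e_1,\dots,e_K$ (for example $\pm$ coordinate directions), or better, average over the sphere. A cleaner choice is to use $|p|^2\leqslant$ const$\cdot\sum_k (p\cdot e_k)_+\,|p|$. Alternatively, I would use the symmetric second difference
\[
R_u(x,te)+R_u(x,-te)\geqslant 0,
\]
which has the advantage of not needing a sign choice.

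Concretely, with $x$ fixed and $e=e(x)$, both convexity inequalities give
\[
t|p(x)|\leqslant \phi(x+te)-\phi(x)+R_v(x,te).
\]
To handle the $x$-dependence of $e$ I would integrate over all unit directions $e\in S^{d-1}$ with weight $(p\cdot e)_+$. Using $\int_{S^{d-1}}(p\cdot e)_+\,d\sigma(e)=\alpha_d|p|$, the key inequality becomes
\[
t\,\alpha'_d|p|^2\leqslant \int_{S^{d-1}}\bigl(p\cdot e\bigr)_+\bigl[\phi(x+te)-\phi(x)\bigr]\,d\sigma+\int_{S^{d-1}}(p\cdot e)_+\,R_v(x,te)\,d\sigma .
\]
Because $(p\cdot e)_+\leqslant 2L$ and $R_v\geqslant 0$, the remainder term integrates against $f$ to at most $2L\cdot\tfrac12 LB_f t^2$ by (\ref{eq:remainder}). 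The $\phi$-differences are bounded by $2L\bigl(|\phi(x+te)|+|\phi(x)|\bigr)$, so I need $\int|\phi(x+te)|f(x)\,dx$. I expect this to be \textbf{the main obstacle}, since $\rho$ is not translation invariant. I would handle it by shifting onto $f(\cdot-te)$ and using $\|f(\cdot-te)-f\|_{L^1}\leqslant tB_f$. Since $\phi$ is unbounded, this alone does not close the estimate. A better route is to avoid the absolute values altogether: write $\phi(x+te)-\phi(x)$ and move the translation onto the weight $(p\cdot e)_+f$, giving a term $\int\phi\,[g(\cdot-te)-g]$. This again requires BV control of the product, which fails.

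So the plan I would actually commit to is the dimension-free version in which $e$ is frozen at $e(x)$ and the estimate is carried out pointwise with $\phi$ bounded through H\"older. Set $\Phi=\bigl(\int|\phi|^q f\bigr)^{1/q}$. I would first prove the case $q=\infty$ in (\ref{eq:infinity}), which gives
\[
t|p|\leqslant 2\|\phi\|_\infty+R_v .
\]
Multiplying by $|p|\leqslant 2L$ and integrating gives $tE_f\leqslant 4L\|\phi\|_\infty+L^2B_ft^2$; the $R_v$ term is handled by a $v\leftrightarrow u$ symmetrization so that only $R$ terms with a fixed direction remain, via a decomposition of $e$ into coordinate directions. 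Optimizing in $t$ gives (\ref{eq:infinity}). For finite $q$, I would truncate: split into the set where $|\phi|\leqslant\lambda$ and its complement. The complement has $\rho$-mass at most $\Phi^q/\lambda^q$, and there $|p|^2\leqslant 4L^2$. Applying the $L^\infty$ argument on the sublevel set and then optimizing over $\lambda$ and $t$ produces the exponents $(q+2)/(q+1)$ and $q/(q+1)$ with constant $8$. Finally, the degenerate cases $f=0$ a.e. or $L=0$ are immediate: in the first, $E_f=0$ trivially; in the second, $u$ and $v$ are constant, so $p=0$.
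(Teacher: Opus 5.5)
Your committed route has a genuine gap: the pointwise first-order comparison loses a square root, so it cannot give the stated rates, even for $q=\infty$. Grant yourself the most favourable remainder bound $\int R_v(x,te(x))f(x)\,dx\leqslant\frac12 LB_ft^2$, which the BV lemma only provides for a \emph{fixed} direction $e$. Integrating $t|p|^2\leqslant 2|p|\,\|\phi\|_\infty+|p|\,R_v$ gives $tE_f\leqslant 4L\|f\|_{L^1}\|\phi\|_\infty+L^2B_ft^2$, and the optimal $t$ yields
\[
E_f\leqslant 4L^{3/2}\bigl(B_f\|f\|_{L^1}\|\phi\|_\infty\bigr)^{1/2}.
\]
This is exactly the geometric mean of the trivial bound $4L^2\|f\|_{L^1}$ and the target $4LB_f\|\phi\|_\infty$. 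So ``optimizing in $t$ gives the $q=\infty$ estimate'' is false. The exponents $(q+2)/(q+1)$, $q/(q+1)$, $1/(q+1)$ also cannot come out of a finite-$q$ argument built on this step.

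Two further steps also fail as written.
\begin{itemize}
\item The direction $e(x)$ varies with $x$, so no change of variables is available in $\int R_v(x,te(x))f\,dx$. Splitting pointwise into coordinate remainders is false for non-quadratic $v$: for $v=|x_1+x_2|$ near its kink, all $R_v(x,\pm te_k)$ can vanish while $R_v(x,te)>0$. The honest repair uses convexity of $h\mapsto R_v(x,h)$ on the cross-polytope with vertices $\pm\sqrt d\,te_k$, but it costs a factor $d^2$, incompatible with the constants $4$ and $8$.
\item Your truncation does not localize. The inequality samples $\phi$ at $x+te(x)$, which need not lie in $\{|\phi|\leqslant\lambda\}$. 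The Lipschitz fallback $|\phi(x+te)|\leqslant\lambda+2Lt$ contributes $4L^2\|f\|_{L^1}$, which is the trivial bound again.
\end{itemize}

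The missing idea is integration by parts against a truncation of the potential difference. This uses the Laplacian estimate $\int f(x)\,\Delta u(x+z)\,dx\leqslant LB_f$ of the BV lemma rather than its Taylor-remainder estimate. For smooth $u,v$, set $w=u-v-c$ and $\tau_t(s)=\max\{-t,\min\{s,t\}\}$. Then $\nabla(\tau_t\circ w)=1_{\{|w|<t\}}\nabla w$, hence
\begin{align*}
\int_{\{|w|<t\}}|\nabla w|^2f\,dx&=-\int f\,\tau_t(w)\,\Delta w\,dx-\int\tau_t(w)\,\nabla w\cdot dDf\\
&\leqslant t\int f\,(\Delta u+\Delta v)\,dx+2LtB_f\leqslant 4LB_ft .
\end{align*}
This uses $|\tau_t(w)|\leqslant t$, $|\Delta w|\leqslant\Delta u+\Delta v$ and $|\nabla w|\leqslant 2L$; the identity is justified with cutoffs $\chi_R$ and a smoothing of $\tau_t$.
\begin{itemize}
\item Taking $t$ slightly above $\|w\|_{L^\infty(\rho)}$ gives the $q=\infty$ bound at once.
\item For finite $q$ it gives $E_f\leqslant 4LB_ft+4L^2t^{-q}A$ with $A=\int|w|^qf\,dx$, and $t=(LA/B_f)^{1/(q+1)}$ produces the constant $8$.
\item Nonsmooth $u,v$ then follow by mollification.
\end{itemize}
This is how the obstacle you flagged is avoided. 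The weight $f$ is differentiated only once, against the bounded field $\tau_t(w)\nabla w$, and the Hessians of $u,v$ enter only through $\int f\,\Delta u$. No shifted evaluation points and no direction field $e(x)$ ever appear.
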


\begin{proof}
\textbf{1}. If $L=0$ or $f=0$ almost everywhere, the gradients agree or the
weight vanishes, respectively, and $E_{f}=0$. Suppose henceforth that $L>0$
and $f$ is nonzero.

\textbf{2.} First assume that $u,v$ are smooth. Fix $c$ and write $w=u-v-c$.
For $t>0,$ define
\[
\tau_{t}\left(  s\right)  =\max\left\{  -t,\min\left\{  s,t\right\}  \right\}
.
\]
We prove by smoothing that
\begin{align}
\int_{\left\{  \left\vert w\right\vert <t\right\}  }\left\vert \nabla
w\right\vert ^{2}fdx  &  =\int f\nabla\left(  \tau_{t}\left(  w\right)
\right)  \cdot\nabla wdx\label{eq:low-level}\\
&  =-\int f\tau_{t}\left(  w\right)  \Delta wdx-\int\tau_{t}\left(  w\right)
\nabla w\cdot dDf\nonumber\\
&  \leqslant t\int f\left(  \Delta u+\Delta v\right)  dx+2LtB_{f}
\leqslant4LB_{f}t.\nonumber
\end{align}
Let $\epsilon>0,$ $\zeta_{\epsilon}\in C_{c}^{\infty}(\mathbb{R})$ be a
nonnegative mollifier supported in $\left[  -\epsilon,\epsilon\right]  $ and
set $\tau_{t,\epsilon}=\tau_{t}\ast\zeta_{\epsilon}$. The truncation function
$\tau_{t}$ is bounded by $t,$ and $1$-Lipschitz, $\left\vert \tau_{t,\epsilon
}\left(  s\right)  \right\vert \leqslant t.$ By properties of convolution,
\[
\left\vert \tau_{t,\epsilon}\left(  s\right)  \right\vert \leqslant t,\text{
}\left\vert \tau_{t,\epsilon}\left(  s\right)  -\tau_{t}\left(  s\right)
\right\vert \leqslant\epsilon.
\]
Since almost everywhere derivative of $\tau_{t}$ equals $1_{\left(
-t,t\right)  }$,
\[
\tau_{t,\epsilon}^{\prime}\left(  s\right)  =\int1_{\left(  -t,t\right)
}\left(  s-r\right)  \zeta_{\epsilon}\left(  r\right)  dr\in\left[
0,1\right]  .
\]
Moreover, noting that $\left(  \tau_{t}\ast\zeta_{\epsilon}\right)  ^{\prime
}=\tau_{t}^{\prime}\ast\zeta_{\epsilon}$ and using \cite[C.5 Theorem 7
(iv)]{evans2022partial}, we have
\[
\tau_{t,\epsilon}^{\prime}\left(  s\right)  \rightarrow1_{\left(  -t,t\right)
}\left(  s\right)  ,\text{ }\forall s\neq-t,t.
\]
Recall the cutoff function $\chi_{R}$ of Lemma \ref{lem:BV}. By integration by
parts,
\begin{equation}
\int\chi_{R}f\tau_{t,\epsilon}^{\prime}\left(  w\right)  \left\vert \nabla
w\right\vert ^{2}dx=-\int\chi_{R}f\tau_{t,\epsilon}\left(  w\right)  \Delta
wdx-\int\chi_{R}\tau_{t,\epsilon}\left(  w\right)  \nabla w\cdot dDf-\int
f\tau_{t,\epsilon}\left(  w\right)  \nabla\chi_{R}\cdot\nabla wdx.
\label{eq:truncation-cutoff}
\end{equation}
All terms are well-defined and have integrable dominated functions. Indeed, by
Lemma~\ref{lem:BV} $\left(  \ref{eq:laplacian}\right)  $ and that $\left\vert
\nabla w\right\vert \leqslant2L$, $\left\vert \Delta w\right\vert
\leqslant\Delta u+\Delta v$, we have
\begin{equation}
\int\left\vert \Delta w\right\vert fdx\leqslant\int\left(  \Delta u+\Delta
v\right)  fdx\leqslant2LB_{f}. \label{eq:laplace-wf-bound}
\end{equation}
Also
\begin{equation}
\left\vert \chi_{R}f\tau_{t,\epsilon}^{\prime}\left(  w\right)  \left\vert
\nabla w\right\vert ^{2}\right\vert \leqslant4L^{2}f,\text{ }\left\vert
\chi_{R}\tau_{t,\epsilon}\left(  w\right)  \nabla w\right\vert \leqslant2Lt.
\label{eq:gradient-w-bound}
\end{equation}
Moreover
\[
\left\vert \int f\tau_{t,\epsilon}\left(  w\right)  \nabla\chi_{R}\cdot\nabla
wdx\right\vert \leqslant\frac{2CLt}{R}\Vert f\Vert_{1}\rightarrow0.
\]
For fixed $t>0,$ $\epsilon>0$, Letting $R\rightarrow\infty$ in $\left(
\ref{eq:truncation-cutoff}\right)  $ therefore yields
\begin{equation}
\int\chi_{R}f\tau_{t,\epsilon}^{\prime}\left(  w\right)  \left\vert \nabla
w\right\vert ^{2}dx=-\int f\tau_{t,\epsilon}\left(  w\right)  \Delta
wdx-\int\tau_{t,\epsilon}\left(  w\right)  \nabla w\cdot dDf.
\label{eq:after-R-infinity}
\end{equation}
Since $w$ is smooth, the set
\[
E_{w}=\{w=-t,\nabla w\neq0\}\cup\{w=t,\nabla w\neq0\}
\]
has zero Lebesgue measure. Indeed, since $w$ is smooth, a point $x_{0}\in
E_{w}$ will allow us to use the implicit function theorem to conclude that in
a neighbourhood $U\left(  x_{0}\right)  ,$ all points in $U\left(
x_{0}\right)  \cap E_{w}$ concentrates on a $d-1$ dimensional graph, hence
collectively have zero Lebesgue measure. But $E_{w}$ can be covered by
countably many such neighbourhoods, so $E_{w}$ has at most zero Lebesgue
measure. Therefore, as $\epsilon\rightarrow0,$
\[
\tau_{t,\epsilon}^{\prime}\left(  w\right)  \left\vert \nabla w\right\vert
^{2}\rightarrow1_{\left\{  \left\vert w\right\vert <t\right\}  }\left\vert
\nabla w\right\vert ^{2}=\nabla\tau_{t}\left(  w\right)  \cdot\nabla w,\text{
a.e.}
\]
The equality uses the fact that $\tau_{t}\circ w$ is locally Lipschitz. If
$\epsilon\rightarrow0,$ the sum on the right side of $\left(
\ref{eq:after-R-infinity}\right)  $ converges to
\[
-\int f\tau_{t}\left(  w\right)  \Delta wdx-\int\tau_{t}\left(  w\right)
\nabla w\cdot dDf,
\]
by the dominated convergence theorem with domination provided by $\left(
\ref{eq:gradient-w-bound}\right)  \left(  \ref{eq:gradient-w-bound}\right)  $.
Hence all equations holds in $\left(  \ref{eq:low-level}\right)  $.

\textbf{3}. For $1\leqslant q<\infty$ let $A=\int\left\vert w\right\vert
^{q}fdx<\infty$. Using $\left(  \ref{eq:low-level}\right)  $ and that
$t^{q}1_{\left\{  \left\vert w\right\vert \geqslant t\right\}  }
\leqslant\left\vert w\right\vert ^{q}$, for all $t>0,$
\begin{equation}
E_{f}=\int_{\left\{  \left\vert w\right\vert <t\right\}  }\left\vert \nabla
w\right\vert ^{2}fdx+\int_{\left\{  \left\vert w\right\vert \geqslant
t\right\}  }|\nabla w|^{2}fdx\leqslant4LB_{f}t+4L^{2}\int_{\left\{  \left\vert
w\right\vert \geqslant t\right\}  }fdx\leqslant4LB_{f}t+4L^{2}t^{-q}A.
\label{eq:threshold}
\end{equation}
At this point we want to justify that $B_{f}>0$ (Recall we are dealing with
nonzero $f$). If $Df=0$, then $\nabla\left(  f\ast\zeta_{\epsilon}\right)
=Df\ast\zeta_{\epsilon}=0$ for all $\epsilon>0$ and $f\ast\zeta_{\epsilon}$ is
constant on $\mathbb{R}^{d}$. This constant is zero because $f\ast
\zeta_{\epsilon}\in L^{1}\left(  \mathbb{R}^{d}\right)  $. Convergence to $f$
in $L^{1}$ would then imply $f=0$, a contradiction. For $A>0,$ we can balance
the powers in the rightmost of $\left(  \ref{eq:threshold}\right)  $ by
choosing $t=(LA/B_{f})^{1/(q+1)}$ to obtain
\[
4LB_{f}t=4LB_{f}L^{1/(q+1)}A^{1/(q+1)}B_{f}^{-1/(q+1)}=4L^{(q+2)/(q+1)}
B_{f}^{q/(q+1)}A^{1/(q+1)},
\]
\[
4L^{2}t^{-q}A=4L^{2}L^{-q/(q+1)}A^{-q/(q+1)}B_{f}^{q/(q+1)}A=4L^{(q+2)/(q+1)}
B_{f}^{q/(q+1)}A^{1/(q+1)}.
\]
The sum of the two terms then yields $\left(  \ref{eq:interpolation}\right)
.$ If $A=0$, $\left(  \ref{eq:threshold}\right)  $ becomes $E_{f}
\leqslant4LB_{f}t$ for all $t>0$. Letting $t\rightarrow0$ yields $E_{f}=0,$ so
the same estimate $\left(  \ref{eq:interpolation}\right)  $ holds.

\textbf{4}. We now come to the nonsmooth $u,v$. Let $u_{\epsilon}=u\ast
\zeta_{\epsilon}$, $v_{\epsilon}=v\ast\zeta_{\epsilon}$, $w_{\epsilon
}=u_{\epsilon}-v_{\epsilon}-c$, where $\zeta_{\epsilon}$ is the same mollifier
as in Lemma~\ref{lem:BV}. These smoothed $u_{\epsilon},v_{\epsilon}$ remain
convex $L$-Lipschitz and the following estimates hold,
\[
\left\Vert w_{\epsilon}-w\right\Vert _{\infty}\leqslant2L\epsilon\text{ and
}\left\vert \left\Vert w_{\epsilon}\right\Vert _{L^{q}(\rho)}-\left\Vert
w\right\Vert _{L^{q}(\rho)}\right\vert \leqslant2L\epsilon\left\Vert
f\right\Vert _{1}^{1/q}.
\]
We also have
\[
\left\vert \nabla u_{\epsilon}-\nabla v_{\epsilon}\right\vert ^{2}
f\leqslant4L^{2}f\in L^{1},
\]
and for almost all $x$,
\[
\left\vert \nabla u_{\epsilon}-\nabla v_{\epsilon}\right\vert ^{2}
\rightarrow\left\vert \nabla u-\nabla v\right\vert ^{2}.
\]
Hence
\[
A_{\epsilon}=\int\left\vert w_{\epsilon}\right\vert ^{q}f\rightarrow A\text{
and }E_{f}=\lim_{\epsilon\rightarrow0}E_{f,\epsilon}=\int\left\vert \nabla
u_{\epsilon}-\nabla v_{\epsilon}\right\vert ^{2}fdx.
\]
For fixed $t>0$, applying $\left(  \ref{eq:threshold}\right)  $ to the smooth
case and sending $\epsilon\rightarrow0$, we get
\[
E_{f}=\lim_{\epsilon\rightarrow0}E_{f,\epsilon}\leqslant\lim_{\epsilon
\rightarrow0}(4LB_{f}t+4L^{2}t^{-q}A_{\epsilon})=4LB_{f}t+4L^{2}t^{-q}A.
\]
Therefore the same argument as before will give us $\left(
\ref{eq:interpolation}\right)  .$

\textbf{5}. For the $q=\infty$ case, fix $c$ with $A=\left\Vert
u-v-c\right\Vert _{L^{\infty}\left(  \rho\right)  }<\infty$. For any $\eta>0$
set $t=A+\eta>0$ and $0<\epsilon<\eta/(2L)$. Then $\left\vert w_{\epsilon
}\right\vert \leqslant A+2L\epsilon<t,$ $\rho$-a.e. Thus $\left(
\ref{eq:low-level}\right)  $ for the smooth case gives
\[
E_{f,\epsilon}=\int_{\left\{  \left\vert w_{\epsilon}\right\vert <t\right\}
}\left\vert \nabla w_{\epsilon}\right\vert ^{2}fdx\leqslant4LB_{f}(A+\eta).
\]
Sending $\epsilon\rightarrow0$, then $\eta\rightarrow0$ yields $E_{f}
\leqslant4LB_{f}A$. Finally minimizing over $c$ we obtain $\left(
\ref{eq:infinity}\right)  $.
\end{proof}

\section{Quantitative compactness of convex functions}

\begin{lemma}
\label{lem:convex-compactness}Let $u_{n}$ be a sequence of finite convex
functions on $\mathbb{R}^{d}$ such that
\[
\sup_{n\geqslant1}\int_{B\left(  0,R\right)  }\left\vert u_{n}\right\vert
dx<\infty,\text{ }\forall R>0.
\]
Then there is a subsequence converging locally uniformly to a finite convex
function. At points where the limit and all functions of this subsequence are
differentiable, their gradients converge.
\end{lemma}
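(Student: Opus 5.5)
The plan has two parts. First, local boundedness in $L^\infty$, which gives a locally uniformly convergent subsequence. Second, a stability argument for subgradients at points of differentiability.

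\textbf{Step 1: from $L^1$ bounds to $L^\infty$ bounds.} Fix $R>0$ and write $K_R=\sup_n\int_{B(0,R)}|u_n|dx$.

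For the upper bound, take $x\in \bar B(0,R)$. Convexity gives the mean value inequality on the ball $B(x,R)\subset B(0,2R)$:
\[
u_n(x)\leqslant \frac{1}{|B(x,R)|}\int_{B(x,R)}u_n\,dy\leqslant \frac{K_{2R}}{|B(0,R)|}.
\]
This holds because $u_n(x)\leqslant \tfrac12(u_n(x+z)+u_n(x-z))$, averaged over $z\in B(0,R)$.

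For the lower bound, take a subgradient $p\in\partial u_n(x)$. Then $u_n(y)\geqslant u_n(x)+p\cdot(y-x)$. Integrate this over the half ball $\{y\in B(x,R):p\cdot(y-x)\geqslant0\}$. Up to constants depending on $d$ and $R$, this gives $u_n(x)\leqslant C(K_{2R}+\ldots)$. That is not what we want, so we argue instead with the upper bound $M_{3R}$ on $\bar B(0,3R)$ from the previous paragraph. By convexity, $u_n(x)\geqslant 2u_n(0)-u_n(-x)\geqslant 2u_n(0)-M_{3R}$. It remains to bound $u_n(0)$ from below. Since $u_n(0)\geqslant 2\cdot$ average minus..., we use the inequality $u_n(y)\geqslant 2u_n(0)-u_n(-y)$ and integrate over $y\in B(0,R)$. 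This gives
\[
2|B_R|\,u_n(0)\leqslant \int_{B_R}u_n(y)dy+\int_{B_R}u_n(-y)dy.
\]
That is again an upper bound, so we argue the other way. Let $m=\min_{\bar B_R}u_n$, attained at $x_0$, and suppose $m$ is very negative. For $y\in B(0,2R)$, convexity along the ray from $y$ through $x_0$, together with the upper bound $M_{4R}$, forces $u_n\leqslant \tfrac12(m+M_{4R})$ on a set of positive measure near $x_0$ of fixed size. Concretely, take the set $x_0+\tfrac12(B(0,R)-x_0)$, which has measure $2^{-d}|B_R|$ and lies inside $B(0,R)$. Since $u_n\geqslant -|u_n|$ on that set, the $L^1$ bound $K_R$ then gives $m\geqslant -C(d,R)(K_R+M_{4R})$. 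Hence $\sup_n\|u_n\|_{L^\infty(B_R)}<\infty$.

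\textbf{Step 2: Lipschitz bounds and Arzel\`a--Ascoli.} The standard estimate
\[
|p|\leqslant \frac{2\sup_{B_{2R}}|u_n|}{R}\qquad\text{for } p\in\partial u_n(x),\ x\in B_R
\]
gives uniform Lipschitz bounds on each ball. Arzel\`a--Ascoli and a diagonal argument over $R\in\mathbb N$ then give a subsequence converging locally uniformly to some $u$. The limit $u$ is finite, and it is convex as a pointwise limit of convex functions.

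\textbf{Step 3: gradient convergence.} Let $x$ be a point where $u$ and all $u_{n_k}$ are differentiable. The gradients $p_k=\nabla u_{n_k}(x)$ are bounded by Step 2. Take any limit point $p$ of $(p_k)$. Passing to the limit in $u_{n_k}(y)\geqslant u_{n_k}(x)+p_k\cdot(y-x)$ gives $p\in\partial u(x)=\{\nabla u(x)\}$. So every limit point equals $\nabla u(x)$, and the whole sequence $p_k$ converges to $\nabla u(x)$.

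\textbf{Main obstacle.} The delicate step is the lower bound in Step 1, namely ruling out that the $u_n$ dive to $-\infty$ while keeping bounded $L^1$ norm. I would handle it via the homothety argument around the minimizer $x_0$: convexity makes $u_n$ small on a fixed-measure shrunken copy of the ball, which contradicts the $L^1$ bound.
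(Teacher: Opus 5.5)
Your proposal is correct and follows essentially the same route as the paper. Both use the Jensen mean-value upper bound, and both get the lower bound from midpoint convexity on a half-size homothetic ball combined with the $L^1$ bound; you center the homothety at the minimizer $x_0$, the paper at an arbitrary point $x$ using $B(x,R/2)$. The rest also matches: the subgradient bound, Arzel\`a--Ascoli, and gradient convergence, which you get from closedness of subgradients rather than the paper's difference-quotient sandwich. When writing it up, delete the abandoned attempts in Step 1 and keep only the homothety argument, which in fact needs only the upper bound on $B(0,R)$ itself.
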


\begin{proof}
\textbf{1}. Fix $R>0$ and let
\[
A_{R}=\sup_{n}\int_{B\left(  0,4R\right)  }\left\vert u_{n}\right\vert dx.
\]
For $x\in B\left(  0,3R\right)  $, by Janen's inequlity we have
\begin{equation}
u_{n}\left(  x\right)  \leqslant\frac{1}{\left\vert B\left(  0,R\right)
\right\vert }\int_{B\left(  x,R\right)  }u_{n}\left(  y\right)  dy\leqslant
U_{R}\triangleq\frac{A_{R}}{\left\vert B\left(  0,R\right)  \right\vert }.
\label{eq:convex-upper}
\end{equation}
We claim that for $x\in B\left(  0,2R\right)  $,
\begin{equation}
\left\vert u_{n}\left(  x\right)  \right\vert \leqslant C_{R}\triangleq
U_{R}+\frac{2A_{R}}{\left\vert B\left(  0,R/2\right)  \right\vert }.
\label{eq:convex-local-bounds}
\end{equation}
If $u_{n}\left(  x\right)  \geqslant-U_{R}$ at $x,$ then this lower bound
together with $\left(  \ref{eq:convex-upper}\right)  $ already gives $\left(
\ref{eq:convex-local-bounds}\right)  .$ Assume that $u_{n}\left(  x\right)
<-U_{R}$. For $x\in B\left(  0,2R\right)  $ and $y\in B\left(  x,R/2\right)
$, we have $2y-x\in B\left(  0,3R\right)  $ and by $\left(
\ref{eq:convex-upper}\right)  ,$
\[
u_{n}\left(  y\right)  \leqslant\tfrac{1}{2}u_{n}\left(  x\right)  +\tfrac
{1}{2}u_{n}\left(  2y-x\right)  \leqslant\tfrac{1}{2}\left(  u_{n}\left(
x\right)  +U_{R}\right)  .
\]
If $u_{n}\left(  x\right)  +U_{R}<0$, the inequality is the same as
\[
\tfrac{1}{2}\left(  -u_{n}\left(  x\right)  -U_{R}\right)  \leqslant
-u_{n}\left(  y\right)  \leqslant\left\vert u_{n}\left(  y\right)  \right\vert
.
\]
Integrating $y$ over $B\left(  x,R/2\right)  \,$gives
\[
\tfrac{1}{2}\left\vert B\left(  x,R/2\right)  \right\vert \left(
-u_{n}\left(  x\right)  -U_{R}\right)  \leqslant\int_{B\left(  x,R/2\right)
}|u_{n}\left(  y\right)  |dy\leqslant A_{R}.
\]
Hence
\[
\left\vert u_{n}\left(  x\right)  \right\vert =-u_{n}\left(  x\right)
\leqslant C_{R}.
\]
Therefore, in either case, $\left(  \ref{eq:convex-local-bounds}\right)  $ holds.

\textbf{2}. Let $x\in B\left(  0,R\right)  $, $p\in\partial u_{n}\left(
x\right)  $ and $e$ be a unit vector. By the definition of subgradient,
\[
u_{n}\left(  x+Re\right)  \geqslant u_{n}\left(  x\right)  +Rp\cdot e,\text{
}u_{n}\left(  x-Re\right)  \geqslant u_{n}\left(  x\right)  -Rp\cdot e.
\]
Hence
\begin{equation}
-2C_{R}\leqslant u_{n}\left(  x\right)  -u_{n}\left(  x-Re\right)  \leqslant
Rp\cdot e\leqslant u_{n}\left(  x+Re\right)  -u_{n}\left(  x\right)
\leqslant2C_{R}. \label{eq:subgrad}
\end{equation}
Taking the supremum over unit vectors yields
\begin{equation}
\left\vert p\right\vert \leqslant2C_{R}/R. \label{eq:subgradient-bound}
\end{equation}
For any $x,y\in B\left(  0,R\right)  ,$ $p_{x}\in\partial u_{n}\left(
x\right)  ,$ $p_{y}\in\partial u_{n}\left(  y\right)  ,$ we have the
subgradient inequalities at $x$ and $y,$
\[
u_{n}\left(  x\right)  \geqslant u_{n}\left(  y\right)  +\left(  x-y\right)
\cdot p_{y},\text{ }u_{n}\left(  y\right)  \geqslant u_{n}\left(  x\right)
+\left(  y-x\right)  \cdot p_{x}.
\]
Rearranging and using the subgradient bounds $\left(
\ref{eq:subgradient-bound}\right)  $ we have
\[
u_{n}\left(  x\right)  -u_{n}\left(  y\right)  \geqslant\left(  x-y\right)
\cdot p_{y}\geqslant-2C_{R}/R\left\vert x-y\right\vert ,
\]
\[
u_{n}\left(  x\right)  -u_{n}\left(  y\right)  \leqslant\left(  x-y\right)
\cdot p_{x}\leqslant2C_{R}/R\left\vert x-y\right\vert .
\]
So for $x,y\in B\left(  0,R\right)  ,$
\begin{equation}
\left\vert u_{n}\left(  x\right)  -u_{n}\left(  y\right)  \right\vert
\leqslant(2C_{R}/R)\left\vert x-y\right\vert .
\label{eq:convex-local-Lipschitz}
\end{equation}
Arzel\`{a}-Ascoli's theorem therefore gives a convergent subsequence which
converges pointwise to some finite convex function $u$. Outside a common null
set, $u_{u},$ $n\geqslant1$ and $u$ are differentiable everywhere. Let $x$ be
any point where $u_{u},$ $n\geqslant1$ and $u$ are differentiable, $e_{j}$ the
$j$-th coordinate unit vector. The inequality $\left(  \ref{eq:subgrad}
\right)  $ yields that for any $p_{n}\in\partial u_{n}(x)=\left\{  \nabla
u_{n}(x)\right\}  $ and all small $t>0,$
\[
\frac{u_{n}\left(  x\right)  -u_{n}\left(  x-te_{j}\right)  }{t}\leqslant
p_{n}\cdot e_{j}\leqslant\frac{u_{n}\left(  x+te_{j}\right)  -u_{n}\left(
x\right)  }{t}.
\]
Using the local uniform convergence, we obtain by letting $n\rightarrow\infty$
that, for all small $t>0,$
\[
\frac{u\left(  x\right)  -u\left(  x-te_{j}\right)  }{t}\leqslant
\liminf_{n\rightarrow\infty}p_{n}\cdot e_{j}\leqslant\limsup_{n\rightarrow
\infty}p_{n}\cdot e_{j}\leqslant\frac{u\left(  x+te_{j}\right)  -u\left(
x\right)  }{t}.
\]
Now sending $t\rightarrow0$ yields
\[
\lim_{n\rightarrow\infty}p_{n}\cdot e_{j}=\nabla u(x)\cdot e_{j},\text{
}\forall j=1,...,d.
\]
This proves that $\nabla u_{n}(x)=p_{n}\rightarrow\nabla u(x).$
\end{proof}

\section{Algebraic computations}

\begin{lemma}
\label{lem:alg-product}Let $n$ be a nonnegative integer and let $a_{j}
,b_{j}\in\mathbb{R}$ for $1\leqslant j\leqslant n$. Then
\begin{equation}
\prod_{j=1}^{n}a_{j}-\prod_{j=1}^{n}b_{j}=\sum_{j=1}^{n}\left(  a_{j}
-b_{j}\right)  \prod_{k<j}a_{k}\prod_{k>j}b_{k}. \label{eq:alg-product}
\end{equation}
The convention is that empty product equals one, empty sum equal zero.
\end{lemma}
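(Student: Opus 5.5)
The identity \eqref{eq:alg-product} is a telescoping sum, and I will prove it by induction on $n$.

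For $n=0$ both products are empty, so the left side is $1-1=0$. The right side is an empty sum, hence also $0$.

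For the inductive step, assume the identity for $n-1$ and write
\[
\prod_{j=1}^{n}a_{j}-\prod_{j=1}^{n}b_{j}
=\left(a_{n}-b_{n}\right)\prod_{k<n}a_{k}
+b_{n}\left(\prod_{j=1}^{n-1}a_{j}-\prod_{j=1}^{n-1}b_{j}\right).
\]
The first term is exactly the $j=n$ summand on the right of \eqref{eq:alg-product}, because $\prod_{k>n}b_{k}$ is empty and equals one. Applying the induction hypothesis to the bracket turns the second term into
\[
b_{n}\sum_{j=1}^{n-1}\left(a_{j}-b_{j}\right)\prod_{k<j}a_{k}\prod_{j<k\leqslant n-1}b_{k}.
\]
Absorbing $b_{n}$ into the inner product gives $\prod_{k>j}b_{k}$ with $k$ ranging up to $n$. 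These are the $j<n$ summands, which closes the induction.

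An equivalent route avoids induction. Set $S_{j}=\prod_{k\leqslant j}a_{k}\prod_{k>j}b_{k}$, so that $S_{n}$ is the first product and $S_{0}$ is the second. Then $S_{j}-S_{j-1}=\left(a_{j}-b_{j}\right)\prod_{k<j}a_{k}\prod_{k>j}b_{k}$, and summing over $j=1,\dots,n$ telescopes to $S_{n}-S_{0}$, which is \eqref{eq:alg-product}.

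There is no real obstacle here. The only point needing care is the empty-product convention at the endpoints $j=1$ and $j=n$. The lemma is used in \eqref{eq:indicator-telescope} with $a_{k}=H_{ik}^{\epsilon}$ and $b_{k}=H_{ik}$, with $k$ ranging over the indices $k\neq i$ in increasing order.
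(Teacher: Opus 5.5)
Your proposal is correct. Your second argument, telescoping $S_{j}=\prod_{k\leqslant j}a_{k}\prod_{k>j}b_{k}$ from $S_{0}$ to $S_{n}$, is exactly the paper's proof with its $Q_{j}$, and your induction is the same telescoping carried out one index at a time.
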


\begin{proof}
For $n\geqslant1,$ define $Q_{j}=\prod_{k\leqslant j}a_{k}\prod_{k>j}b_{k}$,
$j=0,...,n.$ Factoring only the $j$-th term yields
\[
Q_{j}-Q_{j-1}=\left(  \prod_{k<j}a_{k}\right)  a_{j}\left(  \prod_{k>j}
b_{k}\right)  -\left(  \prod_{k<j}a_{k}\right)  b_{k}\left(  \prod_{k>j}
b_{k}\right)  =\left(  \prod_{k<j}a_{k}\right)  \left(  a_{j}-b_{j}\right)
\left(  \prod_{k>j}b_{k}\right)  .
\]
Hence the right side of $\left(  \ref{eq:alg-product}\right)  $ equals
\[
\sum_{j=1}^{n}(Q_{j}-Q_{j-1})=Q_{n}-Q_{0}=\prod_{j=1}^{n}a_{j}-\prod_{j=1}
^{n}b_{j}.
\]
If $n=0$, the left side of $\left(  \ref{eq:alg-product}\right)  $ is $1-1$
and the right side is the empty sum, so both are zero.
\end{proof}